\title{Dg enhanced orbit categories and applications}
\author{Li Fan, Bernhard Keller and Yu Qiu}
\address{Fl: Department of Mathematical Sciences, Tsinghua University, 100084 Beijing, China.}
\email{fan-l17@tsinghua.org.cn}
\address{B. Keller: Universit\'e Paris Cit\'e and Sorbonne Université, CNRS, IMJ-PRG, F-75013 Paris, France}
\email{bernhard.keller@imj-prg.fr}
\address{Qy: Yau Mathematical Sciences Center and Department of Mathematical Sciences, Tsinghua University, 100084 Beijing, China. \&  Beijing Institute of Mathematical Sciences and Applications, Yanqi Lake, Beijing, China}
\email{yu.qiu@bath.edu}
\tikzset{->-/.style={decoration={  markings,  mark=at position #1 with
    {\arrow{>}}},postaction={decorate}}}
\tikzset{-<-/.style={decoration={  markings,  mark=at position #1 with
    {\arrow{<}}},postaction={decorate}}}
\theoremstyle{plain}
\newtheorem{theorem}{Theorem}[section]
\newtheorem{lemma}[theorem]{Lemma}
\newtheorem{corollary}[theorem]{Corollary}
\newtheorem{proposition}[theorem]{Proposition}
\newtheorem{thmdef}[theorem]{Theorem/Definition}
\theoremstyle{definition}
\newtheorem{definition}[theorem]{Definition}
\newtheorem{example}[theorem]{Example}
\newtheorem{remark}[theorem]{Remark}
\numberwithin{equation}{section}
\newtheorem{assumption}[theorem]{Assumption}
\def\hua{\mathcal}
\def\hh{\mathcal}
\def\<{\langle}
\def\>{\rangle}
\def\={\simeq}
\def\to{\rightarrow}
\def\iso{\xrightarrow{\;\sim\;}}
\def\Lten{\stackrel{L}{\otimes}}
\def\NN{\mathbb{N}}
\def\ZZ{\mathbb{Z}}
\def\XX{\mathbb{X}}
\def\S{\mathbb{S}}
\renewcommand{\k}{\mathbf{k}}
\def\C{\hh{C}}
\newcommand{\D}{\operatorname{\hh{D}}}
\def\dim{\operatorname{dim}}
\def\rad{\operatorname{rad}}
\def\gldim{\operatorname{gldim}}
\def\REnd{\operatorname{REnd}}
\def\Fun{\operatorname{Fun}}
\def\dgFun{\Fun_{dg}}
\def\dg{\hh{A}} 
\def\dgg{\hh{B}} 
\def\dbg{\hh{B}} 
\def\dgcat{\operatorname{dgcat_{\k}}}
\def\pretr{\operatorname{pretr}}
\def\llEq{\operatorname{llEq}}
\def\ZZeq{\ZZ\text{-}\operatorname{Eq}}
\def\Zeq{\ZZ\text{-}\operatorname{Eq}(\dg,F,\dgg)}
\def\Nlleq{\mathbf{R}\NN\text{-}\llEq(\dg, F, \dgg)}
\def\CFZ{\dg/F^{\ZZ}}
\def\CllFN{\dg/_{ll}F^{\NN}}
\def\Cloc{(\dg/_{ll}F^{\NN})[q^{-1}]}
\def\i{i}
\def\p{p}
\def\qF{F_{\dg/\N}}
\def\N{\hh{N}}
\def\NFZ{\N/F_{\N}^{\ZZ}}
\def\repp{\operatorname{rep}}
\def\Hqe{\operatorname{Hqe}}
\def\hocolim{\operatorname{hocolim}}
\def\Xh{X^{\wedge}}
\def\Pairs{\operatorname{Pairs}}
\def\loll{\dg[S^{-1}]/_{ll}F'^{\NN}}
\def\loZ{\dg[S^{-1}]/F'^{\ZZ}}
\def\Grm{\operatorname{Grm}(\k)}
\def\Cdb{\C_{dbg}(\k)}
\def\kk{\mathbf{K}}
\def\sigm{\mathfrak{S}_m}
\def\AsZ{\hh{A}/\sigm^{\ZZ}}
\def\perz{\per^{\ZZ}}
\def\pvdz{\pvd^{\ZZ}}
\def\CFZb{\dg/^{dbg}F^{\ZZ}}
\def\CllFNb{\dg/^{dbg}_{ll}F^{\NN}}
\def\Cdgk{\hh{C}_{dg}(\k)}
\def\Cdgd{\hh{C}_{dg}(\dg)}
\def\QN{Q_{\NN}}
\def\QZ{Q_{\ZZ}}
\def\Ql{Q_{loc}}
\def\DXQ{\pvdz(\qq{\XX})}
\newcommand{\sslash}{\mathbin{/\mkern-6mu/}}
\def\DNQ{\pvd(\qq{N})}
\newcommand{\qq}[1]{\operatorname{\Pi}_{#1}A}
\def\obj{\operatorname{obj}}
\def\Hom{\operatorname{Hom}}
\def\Homz{\operatorname{Hom}^{\ZZ}}
\def\hom{{\hh{H}}om}
\def\rep{\operatorname{rep}_{dg}}
\def\Ho{\operatorname{Ho}}
\def\pvd{\operatorname{pvd}}
\def\sg{\operatorname{sg}}
\def\thick{\operatorname{thick}}
\renewcommand{\mod}{\operatorname{mod}}
\newcommand{\Cone}{\operatorname{Cone}}
\newcommand{\id}{\operatorname{id}}
\newcommand{\per}{\operatorname{per}}
\def\P{\mathbf{P}}
\def\RHom{\operatorname{RHom}}
\def\RHomz{\operatorname{RHom}^{\ZZ}}
\def\pixn{\pi_{N}}
\def\TEA{E}
\def\Ex{E_{\XX}}
\def\TAX{T}
\def\shiftX{\widehat{X}}
\newcommand\reserve[1]{}
\begin{document}
\maketitle


\setlength\parindent{0em}
\setlength{\parskip}{5pt}
\begin{abstract}
Our aim in this paper is to prove two results related to the three constructions
of cluster categories: as orbit categories, as singularity categories and as cosingularity categories.
In the first part of the paper, we prove the universal property of pretriangulated orbit categories of 
dg categories first stated by the second-named author in 2005. 
We deduce that the passage to an orbit category commutes 
with suitable dg quotients.
We apply these results to study collapsing of grading for  (higher) cluster categories 
constructed from bigraded Calabi--Yau completions as introduced by Ikeda--Qiu.

The second part of the paper is motivated by the construction of cluster categories
as (co)singularity categories.  We show that, for any dg algebra $A$,  its perfect derived category 
can be realized in two ways: firstly, as an (enlarged) cluster category of a certain
differential bigraded algebra, generalizing a result of Ikeda--Qiu, and secondly as a (shrunk) singularity 
category of another differential bigraded algebra, generalizing a result of Happel following Hanihara.
We relate these two descriptions using a version of relative Koszul duality.

\bigskip\noindent
\emph{Key words:}
Dg orbit categories, pretriangulated hull, Calabi--Yau categories, relative Koszul duality.
\end{abstract}
\setcounter{tocdepth}{1}
\tableofcontents

\section{Introduction}
\paragraph{\textbf{Cluster categories and dg orbit categories}}
Cluster algebras were introduced by Fomin and Zelevinsky \cite{FZ} in 2002 
and cluster phenomena have been spotted in various areas in mathematics (as well as in physics)
including algebraic/symplectic geometry, geometric topology and representation theory. 
A close link between cluster combinatorics and the representation theory of
finite-dimensional algebras was discovered by Marsh--Reineke--Zelevinsky
\cite{MRZ} in 2003. It was extended to a {\em categorification}
of the cluster algebra associated with an acyclic quiver $Q$ by Buan--Marsh--Reineke--Reiten--Todorov \cite{BMRRT},
who introduced the corresponding {\em cluster category $\C_2(\k Q)$} in 2006. Here $\k$ is a fixed field
and $\k Q$ the path algebra of $Q$. The cluster category is defined as the orbit category
\[
    \C_2(\k Q) = \D^b(\mod \k Q)/\S\circ[-2],
\]
where $\D^b(\mod \k Q)$ is the bounded derived category of the category of finitely generated
$\k Q$-modules, $\S$ its Serre functor and $[1]$ its shift functor.
In general, orbit categories of triangulated categories are not triangulated, but it was shown in \cite{K1} that 
$\C_2(\k Q)$ does carry a canonical triangulated structure. This structure is obtained in \cite{K1} using
dg (=differential graded) enhancements. Namely, it is proved that $\C_2(\k Q)$ is canonically equivalent to 
the $H^0$-category of the pretriangulated hull of the {\em dg orbit category} of the dg enhancement of the 
derived category $\D^b(\mod \k Q)$ under the action of the canonical dg lift of the triangle functor $\S \circ [-2]$. 
In section~9 of \cite{K1}, dg orbit categories of general pretriangulated dg categories are studied and
their universal property is stated in Theorems~9.3 and 9.6. It has been used for example in M.~Van den
Bergh's appendix to \cite{KellerReiten08}, in Amiot's fundamental paper \cite{A} and in Amiot--Oppermann's
\cite{AmiotOppermann14}. Our first aim in this article is to provide a careful statement and a detailed proof of
the universal property, cf.~Theorem~\ref{thm:Zeq}. 

\reserve{
The universal property of pretriangulated orbit categories of dg categories was first stated in \cite{K1} and later, Van den Bergh \cite{KellerReiten08} used the universal property to give a proof of the equivalence between some stable category of Frobenius category with good properties and the cluster category generated from its cluster tilting subcategory. Moreover, Amiot--Oppermann \cite{AmiotOppermann14} gave a more precise definition of the generalized cluster category via triangulated orbit category and realized the bounded derived category of a $G$-graded algebra as the triangulated orbit categories where $G$ is an Abelian group, by using the universal property.}

In more detail, for a dg category $\dg$ with a right perfect bimodule $F\in\rep(\dg,\dg)$, we first define the left lax quotient $\CllFN$ of $\dg$ (\Cref{def:CllFZ}) and show the universal property that it satisfies (\Cref{equi-CllNll}). We then define the dg orbit category $\CFZ$ as a dg localization of $\CllFN$ (\Cref{def:CFZ}). Notice that, even if $\dg$ is a pretriangulated, the dg orbit category $\CFZ$ may not be pretriangulated. A natural remedy is to take its pretriangulated hull $\pretr(\CFZ)$, which is unique in the Morita homotopy category of dg categories (\Cref{mainthm}). Therefore, we can naturally define a triangulated orbit category associated to some triangulated category $\hh{T}$ with a dg enhancement as follows:
\begin{thmdef}[\Cref{def:triorbit}]
Let $\hh{T}$ be a triangulated category endowed with a dg enhancement $H^0(\hua{A})\iso \hua{T}$ and let $F\in\rep(\dg,\dg)$ be a dg bimodule. If the induced functor $H^0(F): H^0(\hh{A})\to H^0(\hh{A})$ is an equivalence, then there is a canonical \emph{triangulated orbit category} of $\hh{T}$ under the action of $F$, which is defined as the category 
$H^0(\pretr(\hh{A}/F^{\ZZ}))$.
\end{thmdef}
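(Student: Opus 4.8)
The plan is to split the statement into its trivial and its substantive parts. The trivial part is that $H^0(\pretr(\CFZ))$ is triangulated: by construction $\pretr(\CFZ)$ is a pretriangulated dg category, and $H^0$ of any pretriangulated dg category carries its standard triangulated structure (the suspension and the cones being read off from the dg structure), so there is nothing to do beyond invoking this. The hypothesis that $H^0(F)$ is an equivalence does not enter here; it enters in identifying $H^0(\pretr(\CFZ))$ as a genuine \emph{orbit category}, namely in guaranteeing that $H^0(F)$ generates a $\ZZ$-action on $\hh{T}\cong H^0(\dg)$, so that the ordinary $\ZZ$-orbit category $H^0(\dg)/H^0(F)^{\ZZ}$ is meaningful and maps to $H^0(\pretr(\CFZ))$ through the canonical dg functor $\dg\to\CFZ\to\pretr(\CFZ)$. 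The real content is the word \emph{canonical}, which I read as: $H^0(\pretr(\CFZ))$ depends, up to triangle equivalence, only on $\hh{T}$ together with the action encoded by $F$, and not on the ancillary choices — in particular not on the chosen model for the pretriangulated hull.

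To make ``canonical'' precise I would first fix a notion of equivalence of data. Given a second dg enhancement $H^0(\dgg)\iso\hh{T}$ and a dg bimodule $G$ with $H^0(G)$ an equivalence, a \emph{morphism of data} $(\dg,F)\to(\dgg,G)$ consists of a dg functor $\Phi\colon\dg\to\dgg$ compatible with the identifications with $\hh{T}$, together with a coherent isomorphism of bimodules intertwining $F$ and $G$ — precisely the datum making the composite $\dg\to\dgg\to\dgg/G^{\ZZ}$ of $\Phi$ with the canonical dg functor into an object of $\ZZeq(\dg,F,\dgg/G^{\ZZ})$; it is an \emph{equivalence of data} if $\Phi$ is a quasi-equivalence. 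Specializing to $\dgg=\dg$, $G=F$ and $\Phi=\id$ will then cover, in particular, the independence of the construction of the chosen model of $\pretr$. The goal is to produce, from an equivalence of data, a triangle equivalence $H^0(\pretr(\CFZ))\iso H^0(\pretr(\dgg/G^{\ZZ}))$.

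The key step is the invariance of the dg orbit category itself. A morphism of data yields, by the above, an object of $\ZZeq(\dg,F,\dgg/G^{\ZZ})$, hence by the universal property of $\CFZ$ (\Cref{thm:Zeq}) a dg functor $\CFZ\to\dgg/G^{\ZZ}$; this makes $(\dg,F)\mapsto\CFZ$ functorial and, after checking the relevant coherences, compatible with composition. Moreover, $\CFZ$ is obtained from $\dg$ by forming the left lax quotient $\CllFN$ (\Cref{equi-CllNll}) and then a dg localization (\Cref{def:CFZ}), and both operations preserve quasi-equivalences; therefore, when the underlying functor $\Phi$ of the morphism of data is a quasi-equivalence, the induced dg functor $\CFZ\to\dgg/G^{\ZZ}$ is again a quasi-equivalence.

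Finally I would invoke \Cref{mainthm}: the pretriangulated hull is well defined up to isomorphism in the Morita homotopy category $\Hqe$ (so the choice of model is harmless) and is functorial and preserves quasi-equivalences, hence sends the quasi-equivalence $\CFZ\to\dgg/G^{\ZZ}$ to a quasi-equivalence $\pretr(\CFZ)\to\pretr(\dgg/G^{\ZZ})$; applying $H^0$, and using that $H^0$ of a quasi-equivalence between pretriangulated dg categories is an exact equivalence, yields the desired triangle equivalence. I expect the main obstacle to be the coherence bookkeeping around the universal property: one must pin down the precise internal structure of an object of $\ZZeq(\dg,F,\dgg)$ (the functor together with the intertwining isomorphism and its compatibility with the $\ZZ$-structure), check that the composite with the canonical dg functor is a well-defined such object, and verify compatibility with composition of morphisms of data — the cocycle condition — so that one obtains a genuinely \emph{canonical} identification rather than merely an abstract one. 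The subsidiary verification that the induced functor $\CFZ\to\dgg/G^{\ZZ}$ is a quasi-equivalence when $\Phi$ is, although conceptually clear from the slogan that universal constructions preserve equivalences, still requires going through the explicit descriptions of the lax quotient and of the dg localization.
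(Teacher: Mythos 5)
Your proposal is correct and rests on the same two pillars as the paper's own (brief) justification: the triangulated structure is simply $H^0$ of the pretriangulated dg category $\pretr(\CFZ)$, and ``canonical'' is exactly the uniqueness of $\pretr(\CFZ)$ in $\Hqe$ coming from the universal property of \Cref{mainthm}, which in turn is \Cref{thm:Zeq} combined with the universal property of the pretriangulated hull. The additional layer you build in --- invariance of $H^0(\pretr(\CFZ))$ under an equivalence of enhancement data $(\dg,F)\simeq(\dgg,G)$ given by a quasi-equivalence together with an intertwining isomorphism of bimodules --- goes beyond what the paper asserts at this point, but it is supported by the same machinery and is precisely the functoriality the paper records later (a bimodule $G$ with an isomorphism $\gamma\colon GF\to F'G$ induces $\overline{G}\colon\CFZ\to\dg'/F'^{\ZZ}$ via \eqref{equi-Zeq}); your Yoneda-style use of \Cref{thm:Zeq} for all test categories would even let you bypass the explicit check that the lax quotient and the dg localization preserve quasi-equivalences. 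The only point to keep sharp is the one you already flag implicitly: canonicity is in the dg-level data $(\dg,F)$, not merely in $\hh{T}$ together with the triangle functor $H^0(F)$, and your notion of morphism of data captures exactly this.
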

The dg orbit category we have defined is functorial in a natural way. Moreover, taking suitable dg quotients commutes with taking dg orbit categories (see \Cref{cor:com}). The main application is the compatibility of the cluster category construction with
the collapsing of grading in Section~\ref{sec:5.2}, cf.~below.
As another application, in Section~\ref{ss:higher-cluster}, we extend the construction of the 
$m$-cluster category of \cite{Thomas07, BT} beyond the hereditary case.

The above universal characterization of dg orbit categories is related to Merlin Christ's recent preprint~\cite{C}, where he
studies group quotients in the $\infty$-categorical
setting. Here, for a (discrete) group $G$, a $G$-action on an $\infty$-category $\C$ is given by 
an $\infty$-functor from the classifiying space $BG$  to $\C$ and the group quotient 
is the homotopy colimit of this functor.
In Proposition~4.5 of \cite{C}, Christ shows that the dg orbit category under
the action of a Morita autoequivalence coincides with the $\infty$-categorical group quotient under
the associated $\ZZ$-action (cf.~Lemma~4.3 of \cite{C}). This confirms the fundamental
nature of the orbit category construction (and could be used to obtain a more sophisticated proof of
its universal property in the case of Morita autoequivalences).

\reserve{
\paragraph{\textbf{The applications}}
For a connective, smooth and proper dg algebra $A$ and an integer $m\geq 2$, since the perfect derived category $\per A$ has a canonical dg enhancement $\dg$, we can define its orbit $m$-cluster category from our construction of dg orbit categories (see \Cref{def:orbitmclu}). 
\begin{definition}[\Cref{def:orbitmclu}]
We let $\Sigma_m$ be the composition of the Serre functor $\S$ with $[-m]$. The \emph{orbit $m$-cluster category} associated to $A$ is defined as the triangulated orbit category $H^0(\pretr(\AsZ))$, where $\sigm$ is a dg replacement of $\Sigma_m$.
\end{definition}
In particular, when $A$ is hereditary, it coincides with the cluster category in the sense of \cite{BMRRT}. When $\gldim A\leq m$, it is triangle equivalent to the generalized $m$-cluster category.
}

\paragraph{\textbf{Cluster categories via (co)singularity categories}}
With motivations from (higher) cluster combinatories, Amiot--Guo--Keller \cite{A,G,K4} defined a generalized version of the (higher) cluster category as the {\em cosingularity category}
\[
\C_\Gamma = \per(\Gamma)/\pvd(\Gamma)
\]
of the (higher) Ginzburg dg algebra $\Gamma$, a certain smooth dg algebra invented by Ginzburg in \cite{Ginzburg06}.
Here $\per(\Gamma)$ denotes the perfect derived category and $\pvd(\Gamma)$ the perfectly valued derived
category, whose objects are the dg $\Gamma$-modules whose underlying complexes of vector spaces are perfect.
Motivated by questions about Fukaya categories, mirror symmetry and $q$-deforma\-tions of Bridgeland's stability conditions, Ikeda--Qiu \cite{IQ1, IQ2} further generalized Amiot--Guo--Keller's construction to the differential bigraded (dbg for short) case: 
For  the path algebra $A$ of an acyclic quiver, they defined \cite{IQ1} its \emph{$\XX$-Calabi--Yau completion} 
$\Pi_{\XX}A$ as a certain differential bigraded tensor algebra, cf.~\eqref{eq:pix}, and introduced
and studied $q$-stability conditions on its perfectly valued derived category $\pvdz(\Pi_{\XX}A)$.
As an interesting byproduct, they showed that the {\em $\infty$-cluster category of $A$},
defined as the cosingularity category
\[
\C^{\ZZ}(\Pi_{\XX}A)=\perz(\Pi_{\XX}A)/\pvdz(\Pi_{\XX}A),
\]
is equivalent to the perfect derived category $\per(A)$. 
Here, the Adams grading shift $[\XX]$ on $\C^{\ZZ}(\Pi_{\XX}A)$ corresponds
to the autoequivalence $\S\circ[1]$ on $\per A$.

Ikeda--Qiu's description of $\per(A)$ can be extended to arbitrary finite-dimensional
algebras of finite global dimension and is then reminiscent of Happel's description \cite{H}
of $\D^b(\mod A)$ as the {\em singularity category} (in the sense of Buchweitz \cite{B} and Orlov \cite{O})
of the trivial extension $\Ex= A\oplus(DA)[-\XX]$ of $A$. Our aim in Section~\ref{sec:RKD} is to
show how Ikeda--Qiu's and Happel's constructions are related in a very general setting:
Let $A$ be a dg algebra, considered as a dbg algebra concentrated in Adams degree 0, 
and let $X\in\D(A^e)$ be an invertible dg bimodule with inverse $Y$, cf. \eqref{eq:inverse}.
Define the \emph{enlarged cluster category} of $A$ with respect to $X$ as the Verdier quotient
\begin{equation}
\C^{\ZZ}(\TAX,A)=\perz(\TAX)/\pvdz(\TAX,A),
\end{equation}
where $\TAX=T_A(X[\XX-1])$ is the \emph{dbg tensor algebra}.
Dually, define the \emph{shrunk singularity category} of $A$ as the Verdier quotient
\begin{equation}
\sg^{\ZZ}(\TEA,A)=\pvdz(\TEA,A)/\perz(\TEA),
\end{equation}
where $\TEA$ is the \emph{dbg trivial extension algebra} $A\oplus Y[-\XX]$. We refer to \Cref{sec:RKD} for more details.
These dbg algebras and categories allow to realize $\per A$ in two ways,
which are related via a relative version of Koszul duality as follows.
\begin{theorem}[\Cref{thm:RKD}]\label{thm:RKDint}
We have the following commutative diagram
\begin{equation}\label{eq:comdiaint}
\begin{tikzcd}[column sep=30,row sep=12]
{\pvdz(\TAX,A)} \arrow[r, hook] \arrow[dd, "\wr"] & \perz(\TAX) \arrow[r] \arrow[dd, "{\wr. \RHomz_\TAX(A,?)}"] & {\C^{\ZZ}(\TAX,A)} \arrow[dd, "\wr"] & \\
  &  &  & \per A. \arrow[ld, "{\Psi,\sim}"] \arrow[lu, "{[1]\circ\Phi,\sim}"']\\
\perz(\TEA) \arrow[r, hook]   & {\pvdz(\TEA,A)} \arrow[r] & {\sg^{\ZZ}(\TEA,A)}  &
\end{tikzcd}
\end{equation}
\end{theorem}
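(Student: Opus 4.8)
The plan is to establish the two vertical equivalences on the left and right, the horizontal equivalence in the middle, compatibility with the row maps, and finally the commutativity of the two triangles with $\per A$ at the apex; the diagonal equivalences $\Psi$ and $[1]\circ\Phi$ will then be \emph{defined} to make everything commute, and the content of the theorem is that they are well-defined equivalences. First I would treat the top row. Because $\TAX = T_A(X[\XX-1])$ with $X$ invertible, the $\XX$-Calabi--Yau-completion philosophy of Ikeda--Qiu \cite{IQ1, IQ2} applies: $\TAX$ is a dbg algebra whose collapse recovers a Calabi--Yau completion of $A$, and the cosingularity quotient $\perz(\TAX)/\pvdz(\TAX,A)$ is equivalent to $\per A$. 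I would set $\Phi$ to be this Ikeda--Qiu-type equivalence $\per A \iso \C^{\ZZ}(\TAX,A)$, composed with the shift $[1]$ so that the Adams grading $[\XX]$ corresponds to $\S\circ[1]$ as in their theorem. The key computational input here is that $\RHomz_\TAX(A,?) : \perz(\TAX)\to\perz(\TEA)$ is an equivalence: one computes $\REnd^{\ZZ}_\TAX(A)$ and identifies it, as a dbg algebra, with $\TEA = A\oplus Y[-\XX]$. This is the relative Koszul duality statement, and verifying that the $A_\infty$- (or dbg) structure on $\REnd^{\ZZ}_\TAX(A)$ is formal and equals the trivial extension $A\oplus Y[-\XX]$ with the expected differential is, I expect, the main obstacle — it requires a careful bar/cobar or twisted-complex computation using invertibility of $X$ (so that $Y$ appears as the dual bimodule) and the fact that $A$ is a "small" generator whose derived endomorphisms pick up exactly one copy of the shifted inverse bimodule.

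Next I would handle the bottom row dually. Koszul duality exchanges $\per$ and $\pvd$: the equivalence $\RHomz_\TAX(A,?)$ restricts to an equivalence $\pvdz(\TAX,A)\iso\perz(\TEA)$ (this is the leftmost vertical map), because objects whose underlying complexes are perfect over the base go, under derived Hom into the compact generator, to perfect $\TEA$-modules, and vice versa. Correspondingly it sends the ambient category $\perz(\TAX)$ to $\pvdz(\TEA,A)$ — here one must check that $\RHomz_\TAX(A,P)$ has perfectly-valued underlying complex for $P$ perfect, which follows since $A$ itself lies in $\pvdz$ over the base and the six-functor formalism preserves this finiteness. Passing to Verdier quotients then yields the rightmost vertical equivalence $\C^{\ZZ}(\TAX,A)\iso\sg^{\ZZ}(\TEA,A)$ and makes the two squares commute by construction of the quotient functors. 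I would then define $\Psi:\per A\to\sg^{\ZZ}(\TEA,A)$ as the composite of $[1]\circ\Phi$ with this rightmost equivalence; the commutativity of the right-hand triangle is then a tautology, and the left-hand triangle commutes because the big middle square does.

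The remaining point is to make sure the identifications are functorial and compatible with the natural inclusions/quotient functors drawn as horizontal arrows, i.e. that $\RHomz_\TAX(A,?)$ genuinely carries the localization sequence $\pvdz(\TAX,A)\hookrightarrow\perz(\TAX)\to\C^{\ZZ}(\TAX,A)$ to the localization sequence $\perz(\TEA)\hookrightarrow\pvdz(\TEA,A)\to\sg^{\ZZ}(\TEA,A)$. This is formal once we know the functor is an exact equivalence of ambient triangulated categories sending one thick subcategory onto the other: Verdier localization is functorial, so the induced map on quotients is automatically an equivalence and the squares commute. The only genuine mathematics, beyond the Ikeda--Qiu input which I am allowed to cite in the generalized form of \Cref{thm:RKDint}, is the dbg-algebra isomorphism $\REnd^{\ZZ}_\TAX(A)\simeq\TEA$ together with the claim that the two thick subcategories match up under derived Hom; everything else is bookkeeping with Verdier quotients and the Adams grading. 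I would therefore organize the section so that this Koszul-duality computation is isolated as a lemma, proved first, after which \Cref{thm:RKDint} drops out by assembling the pieces.
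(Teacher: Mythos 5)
Your Koszul-duality core matches the paper's own argument: the paper also isolates the computation $\RHomz_{\TAX}(A,A)\simeq\TEA$ (including its dbg algebra structure, obtained there not by a bar/cobar argument but by the explicit two-term resolution $\Cone(\shiftX\otimes_A\TAX\to\TAX)$ of $A$, endowed with a left $\TEA$-action) as the key lemma, takes the vertical functors to be $\RHomz_{\TAX}(A,?)$ and its restriction, and obtains the rightmost equivalence by passing to Verdier quotients. Deriving $\Psi$ from $\Phi$ and the vertical equivalences, instead of proving the shrunk-singularity description independently, is also a legitimate route which the paper itself notes is possible (it instead proves \Cref{thm:sg} directly, via Hanihara's d\'evissage, which buys the extra information that $\Psi$ is the restriction along the augmentation $\TEA\to A$, i.e.\ the generalized Happel equivalence, and then checks the triangle on the generator by computing $\RHomz_{\TAX}(A,\iota_A^{\ZZ}(A))\simeq Y[-\XX]\simeq A[-1]$ in $\sg^{\ZZ}(\TEA,A)$).

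There is, however, a genuine gap at the top diagonal. The equivalence $\Phi\colon\per A\iso\C^{\ZZ}(\TAX,A)$ is not available as an Ikeda--Qiu citation in this generality: their Theorem 6.7 concerns path algebras of acyclic quivers with $X=A^\vee$, whereas here $A$ is an arbitrary dg algebra and $X$ an arbitrary invertible bimodule, for which $T_A(X[\XX-1])$ is in general not (and does not collapse to) a Calabi--Yau completion, so your stated reason for why the "philosophy applies" fails. This is exactly the content of \Cref{thm:geniq}, which requires a real argument; the paper constructs an explicit quasi-inverse $M\mapsto\hocolim_q\bigl(M_{-q}\otimes_A(Y[1])^{\otimes_A^q}\bigr)$ using the Adams-graded module structure and checks it kills $\pvdz(\TAX,A)$. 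Without this (or an equivalent argument), your proof of the diagram has no starting equivalence. A second, smaller gap: for the middle vertical you only verify that $\RHomz_{\TAX}(A,?)$ lands in $\pvdz(\TEA,A)$; to get an equivalence $\perz(\TAX)\iso\pvdz(\TEA,A)$ one still needs to know where the generator goes and that the thick subcategories match, which the paper supplies by computing $\RHomz_{\TAX}(A,\TAX)\simeq Y[-\XX]$ and using $\thick(A[q\XX],q\in\ZZ)=\thick(Y[q\XX],q\in\ZZ)$; your "six-functor finiteness" remark does not by itself give essential surjectivity or full faithfulness on $\thick(\TAX[q\XX],q\in\ZZ)$.
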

For the compatibility of these functors with the Adams grading shifts, we refer to \Cref{thm:RKD}. 

In the final section~\ref{sec:5.2}, we consider, for a given $N\geq 3$, the functor which collapses
the components with bidegree $(a,b)$ on the diagonal $a+bN=p$ to the component of degree $p$.
We interpret it as the projection onto an orbit category and, as an application of the
compatibility between Verdier quotients and orbit quotients (\Cref{cor:com}), prove
in \Cref{thm:mainapp}, that the above diagram is compatible with the degree collapsing functor.

\subsection{Contents}
The article is organized as follows. In \Cref{sec:2}, we recall some basic concepts about dg/dbg categories. In \Cref{sec:3}, we give the construction of the dg orbit category explicitly and show the universal property that it satisfies (see \Cref{mainthm}). An application is defining the cluster categories for general dg algebras in \Cref{def:orbitmclu}. In \Cref{sec:RKD}, we realize the perfect derived categories as enlarged cluster categories and shrunk singularity categories respectively and connect them via a relative version of Koszul duality (see Theorems~\ref{thm:geniq}, \ref{thm:sg} and \ref{thm:RKD}). As an application of the commutation between orbit categories and
Verdier quotients, we obtain the exactness of the degree collapsing $2$-functor (see \Cref{thm:mainapp}).

\subsection*{Acknowledgments}
This article is part of L. Fan's Ph.~D. thesis under the supervision of Y. Qiu with an internship mentored by B. Keller. 
L. Fan is grateful to Y. Qiu for suggesting the problem and to B. Keller for his guidance and  patience. The authors thank Norihiro Hanihara for allowing them to include his Theorem~\ref{thm:sg}. B. Keller was supported by ANR project CHARMS (No.~ANR-19-CE40-0017). Y. Qiu is supported by the National Natural Science Foundation of China (Grants No.~12425104, 12031007 and 12271279) and the National Key R\&D Program of China (No.~2020YFA0713000). 
\section{Preliminaries}\label{sec:2}
\subsection{Notations and Terminology}
We fix a perfect field $\k$. All categories will be $\k$-categories.
The composition of morphisms is from right to left, that is, the composition of $f:X\to Y$ and $g:Y\to Z$ is $g\circ f:X\to Z$. If $\phi:K\to L$ is a morphism of complexes, we define its \emph{homotopy kernel} to be $\Sigma^{-1}\Cone(\phi)$.

Let $\hh{T}$ be a triangulated category and $\hh{P}$ a set in $\hh{T}$. We denote by $\thick(\hh{P})$ the smallest full triangulated subcategory of $\hh{T}$ containing $\hh{P}$ and closed under direct summands. Let $\hh{X}$ and $\hh{Y}$ be two full subcategories of $\hh{T}$. We denote by $\hh{X}*\hh{Y}$ the full subcategory of $\hh{T}$ consisting of objects $T\in\obj(\hh{T})$ such that there exists a triangle $X\to T\to Y\to X[1]$ with $X\in\obj(\hh{X})$ and $Y\in\obj(\hh{Y})$. Moreover, if we have $\Hom_{\hh{T}}(\hua{X},\hua{Y})=0$, we write $\hua{X}*\hua{Y}$ as $\hua{X}\bot\hua{Y}$. We write
\[\hh{X}{^\perp}=\{T\in\hh{T}\mid\Hom_{\hh{T}}(X,T)=0\}~~~\mbox{and}~{^\perp}\hh{X}=\{T\in\hh{T}\mid\Hom_{\hh{T}}(T,X)=0\}.\]

For a dg algebra $A$ over $\k$, we say $A$ is \emph{proper} if $A$ is perfect as a dg module over $\k$, $A$ is \emph{smooth} if $A$ is perfect as a dg module over $A^e = A^{op}\otimes_{\k}A$, and that $A$ is \emph{connective} if $H^i(A)=0$ for $i>0$.
\subsection{Differential graded categories}
A $\k$-category $\dg$ consists of the following data:
\begin{itemize}
\item a class of objects $\obj(\dg)$,
\item a $\k$-module $\dg(X,Y)$ for all $X, Y\in\obj(\dg)$, and
\item compositions which are $\k$-linear associative maps
\begin{align*}
\dg(Y, Z)\otimes_{\k}\dg(X,Y)&\to\dg(X, Z)\\g\otimes f&\mapsto g\circ f
\end{align*}
admitting units $\id_X\in\dg(X,X)$.
\end{itemize}
\begin{definition}\label{def:dgcat}
A \emph{differential graded category} (dg category for short) is a $\k$-category $\dg$ whose morphism spaces are complexes of $\k$-modules and whose compositions
\begin{align*}
\dg(Y, Z)\otimes_{\k}\dg(X,Y)&\to\dg(X, Z)\\g\otimes f&\mapsto g\circ f
\end{align*}
are chain maps, namely they satisfy the graded Leibniz rule \begin{equation}
d(g\circ f)=d(g)\circ f+(-1)^{|g|}g\circ d(f)
\end{equation} for all homogeneous morphisms $f$ and $g$.
\end{definition}
From the definition, we have that for each $X\in\dg,$ the identity $\id_X$ is a closed morphism of degree 0. The \emph{opposite dg category} $\dg^{op}$ of $\dg$ is the dg category with the same objects as $\dg$. The morphisms in $\dg^{op}$ are defined as $\dg^{op}(X, Y)= \dg(Y,X)$ and the composition of any homogeneous morphisms $f$ and $g$ is defined as $g\circ_{op} f= (-1)^{|f|\cdot|g|}f\circ g$. The \emph{underlying category} $Z^0(\dg)$ has the same objects as $\dg$ and morphisms which are defined by $$(Z^0\dg)(X, Y)= Z^0(\dg(X, Y)),$$ where $Z^0$ is the 0-th cocycle space of the complex $\dg(X,Y)$. The \emph{homotopy category} $H^0(\dg)$ has the same objects as $\dg$ and morphisms defined by $$(H^0\dg)(X, Y)= H^0(\dg(X, Y)),$$ where $H^0$ is the 0-th homology of $\dg(X,Y)$.
\begin{example}
We have the dg category $\Cdgk$ whose objects are all complexes $$V=(\bigoplus_{n\in \mathbb{Z}} V^n, d_V)$$ of $\k$-modules and whose morphism complexes ${\hom}(V, W)$ have their $p$th component given by
\begin{align*}
{\hom}(V, W)^p &=\prod_{n\in \mathbb{Z}} {\rm Hom}_{\k}(V^n, W^{n+p})\\
&=\{f\colon V\rightarrow W \; |\; f \mbox{ is homogeneous of degree } p\}.\end{align*}
The differential is given by \begin{equation}
d(f)=d_W\circ f-(-1)^{|f|} f\circ d_V.
\end{equation}
\end{example}
\begin{definition}\label{def:dgfun}
Let $\dg$ and $\dgg$ be two dg categories. A \emph{dg functor} $F:\dg\to\dgg$ is a $\k$-linear functor such that for any $X, Y\in\obj(\dg)$, the morphisms $$F(X,Y):\dg(X, Y)\to\dgg(FX, FY)$$ are chain maps.
\end{definition}
For a dg functor $F:\dg\to\dgg$, we define $\k$-linear functors $$Z^0(F): Z^0(\dg)\to Z^0(\dgg)~\mbox{and}~H^0(F): H^0(\dg)\to H^0(\dgg)$$ as follows.
We define $Z^0(F)(X)= F(X)$ and $Z^0(F)(X,Y)= Z^0F(X,Y)$, where $$Z^0F(X,Y):Z^0\dg(X,Y)\to Z^0\dgg(FX,FY)$$ is a $\k$-linear map for any objects $X, Y\in\obj(\dg)$. Analogously, We define $$H^0(F)(X)= F(X)~\mbox{and}~H^0(F)(X,Y)= H^0\dg(X,Y),$$ where we have $$H^0F(X,Y):H^0\dg(X,Y)\to H^0\dgg(FX,FY).$$

Let $F,G:\dg\to\dgg$ be two dg functors. The \emph{morphism complex} ${\hom}(F, G)$ has $p$th component given by data $\eta=(\eta_X)_{X\in {\obj}(\dg)}$ of morphisms $\eta_X \colon F(X)\to G(X)$ of degree $p$ in $\dgg$ satisfying\begin{equation}
G(f)\circ \eta_X =(-1)^{p\cdot |f|} \eta_{Y}\circ F(f)
\end{equation}
for all homogenous $f\colon X\to Y$. The differential is given by
\begin{equation}
(d\eta)_X=d_\dgg(\eta_X).
\end{equation}
Thus we have a dg category $\dgFun(\dg,\dgg)$, which is called \emph{functor dg category}, with objects as dg functors from $\dg$ to $\dgg$ and morphisms as above.
\begin{definition}
Let $\dg,\dgg$ be two dg categories. A \emph{quasi-equivalence} is a dg functor $F:\dg\to\dgg$ such that
\begin{itemize}
\item $F(X, Y):\dg(X, Y)\to\dgg(FX, FY)$ is a quasi-isomorphism for any objects $X, Y$ in $\dg$, and
\item the induced functor $H^0(F): H^0(\dg)\to H^0(\dgg)$ is essentially surjective.
\end{itemize}
We say that $F$ is \emph{quasi-fully faithful} if it satisfies the former condition, and \emph{quasi-essentially surjective} if it satisfies the latter one.
\end{definition}
The \emph{category of small dg categories} $\dgcat$ has all small dg categories as objects and dg functors as morphisms. It has a monoidal structure given by the tensor product of categories: Let $\dg$ and $\dgg$ be two dg categories. The dg category $\dg\otimes\dgg$ has objects given by $\obj(\dg)\times\obj(\dgg)$ and morphism spaces given by
\[(\dg\otimes\dgg)((X,X'),(Y,Y'))= \dg(X,Y)\otimes\dgg(X',Y')\]
for objects $X,Y$ in $\dg$ and $X',Y'$ in $\dgg$.
\begin{proposition}
The pair $(\dgcat,\otimes)$ is a closed symmetric monoidal category with internal Hom given by
\[\hom(\dg,\dgg)=\dgFun(\dg,\dgg).\]
\end{proposition}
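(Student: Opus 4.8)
The plan is to establish the adjunction $\Hom_{\dgcat}(\dg\otimes\dgg, \hh{C})\cong\Hom_{\dgcat}(\dg, \dgFun(\dgg,\hh{C}))$ naturally in all three variables, together with the symmetry isomorphism $\dg\otimes\dgg\cong\dgg\otimes\dg$ and the associativity, unit and coherence constraints. First I would recall that $(\dgcat,\otimes)$ being symmetric monoidal is essentially the classical fact that graded $\k$-modules (hence chain complexes, hence $\k$-linear categories enriched in complexes) form a symmetric monoidal category under $\otimes_\k$; the unit is the dg category $\k$ with one object and endomorphism complex $\k$ concentrated in degree $0$, the associator is induced objectwise by the associativity of $\otimes_\k$, and the symmetry $\dg\otimes\dgg\to\dgg\otimes\dg$ is induced by the Koszul-sign twist $a\otimes b\mapsto(-1)^{|a||b|}b\otimes a$ on Hom-complexes. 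One must check that this twist is a dg functor, i.e.\ compatible with the composition law of the tensor-product category and with the differentials; this is a direct verification using the graded Leibniz rule together with the sign conventions fixed in Definition~\ref{def:dgcat}. The pentagon and hexagon axioms then reduce to the corresponding coherence identities for graded $\k$-modules, which hold because all the relevant maps are built from the associativity and symmetry of $\otimes_\k$ applied componentwise.

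The substantive part is the closed structure: I would construct, for dg categories $\dg,\dgg,\hh{C}$, a bijection
\[
\Phi\colon \Hom_{\dgcat}(\dg\otimes\dgg,\hh{C})\;\xrightarrow{\ \sim\ }\;\Hom_{\dgcat}(\dg,\dgFun(\dgg,\hh{C}))
\]
by currying. Given a dg functor $G\colon\dg\otimes\dgg\to\hh{C}$, define $\Phi(G)(X)$ to be the dg functor $\dgg\to\hh{C}$ sending $X'\mapsto G(X,X')$ and a morphism $f'\colon X'\to Y'$ to $G(\id_X\otimes f')$. For a morphism $f\colon X\to Y$ in $\dg$ one sets $\Phi(G)(f)$ to be the family $\bigl(G(f\otimes\id_{X'})\bigr)_{X'\in\obj(\dgg)}$; using the interchange law in the tensor-product category $(g\otimes g')\circ(f\otimes f')=\pm(g\circ f)\otimes(g'\circ f')$, one checks this family satisfies the compatibility condition defining morphisms in $\dgFun(\dgg,\hh{C})$, with exactly the sign $(-1)^{|f|\,|f'|}$ that appears in the definition of $\hom(F,G)$. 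The inverse of $\Phi$ sends a dg functor $H\colon\dg\to\dgFun(\dgg,\hh{C})$ to $(X,X')\mapsto H(X)(X')$ on objects and $f\otimes f'\mapsto H(Y)(f')\circ H(f)_{X'}$ on morphisms. The key checks are: (i) $\Phi(G)$ really lands in dg functors and dg natural families (compatibility with differentials, i.e.\ $d(G(f\otimes\id))=G(d(f)\otimes\id)$ since $G$ is a dg functor and $d(f\otimes\id)=d(f)\otimes\id$); (ii) $\Phi$ and its candidate inverse are mutually inverse, which is a formal bookkeeping of how a morphism $f\otimes f'$ factors as $(f\otimes\id)\circ(\id\otimes f')$ up to the Koszul sign; (iii) naturality of $\Phi$ in $\dg$, $\dgg$, $\hh{C}$, which follows because all constructions are given by explicit formulas on objects and morphisms that commute with pre- and post-composition by dg functors.

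The main obstacle, and the only place where care is genuinely required, is sign bookkeeping: one must verify that the signs built into the composition of $\dg\otimes\dgg$ and into the differential and composition of $\dgFun(\dgg,\hh{C})$ match precisely, so that currying is well defined on the nose (not just up to isomorphism) and so that the unit and counit of the adjunction are themselves dg functors satisfying the triangle identities. Concretely, when one expands $d\bigl(\Phi(G)(f)\bigr)_{X'}=d_{\hh{C}}\bigl(G(f\otimes\id_{X'})\bigr)$ and compares with $\Phi(G)(d f)_{X'}=G(df\otimes\id_{X'})$, one needs $d(f\otimes\id)=df\otimes\id+(-1)^{|f|}f\otimes d(\id)=df\otimes\id$, which is fine; the more delicate comparison is the interchange-law sign $(g\otimes g')\circ(f\otimes f')=(-1)^{|g'|\,|f|}(g\circ f)\otimes(g'\circ f')$ versus the sign $(-1)^{p\cdot|f|}$ in the defining relation $G(f)\circ\eta_X=(-1)^{p\cdot|f|}\eta_Y\circ F(f)$ for $\hom(F,G)$. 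A clean way to organize this is to prove the statement first at the level of the underlying $\k$-linear data by a general nonsense argument about categories enriched over the closed symmetric monoidal category $\hh{C}_{dg}(\k)$ of complexes — indeed $\dgcat$ is exactly $\hh{C}_{dg}(\k)$-$\Cat$, and the closed symmetric monoidal structure on a category of enriched categories over a Bénabou cosmos is a standard result (Kelly) — and then only spell out enough of the formulas to pin down the sign conventions consistent with Definition~\ref{def:dgcat}. Once the adjunction is in place, the remaining axioms of a closed monoidal category (the internal-Hom being a bifunctor $\dgcat^{op}\times\dgcat\to\dgcat$, compatibility of the adjunction with the associator and symmetry) are inherited from the enriched-category formalism, completing the proof.
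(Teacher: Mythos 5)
The paper gives no proof of this proposition: it is invoked as a standard fact about categories enriched over the closed symmetric monoidal category $\Cdgk$ (Kelly's enriched category theory; see also Keller's survey \cite{K2}), and your currying argument is exactly that standard proof, correct in substance. The one blemish is that your explicit inverse formula $f\otimes f'\mapsto H(Y)(f')\circ H(f)_{X'}$ depends on the Koszul convention chosen for composition in $\dg\otimes\dgg$: with $(g\otimes g')\circ(f\otimes f')=(-1)^{|g'|\,|f|}(g\circ f)\otimes(g'\circ f')$ it picks up an extra sign $(-1)^{|f|\,|f'|}$, the strictly inverse assignment being $f\otimes f'\mapsto H(f)_{Y'}\circ H(X)(f')$ — precisely the sort of bookkeeping your proposed reduction to the $\Cdgk$-enriched formalism is meant to absorb.
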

By \cite{T}, there is a cofibrantly generated model structure on $\dgcat$ whose weak equivalences are the quasi-equivalences. We denote the corresponding homotopy category by
\[
    \Hqe=\Ho(\dgcat).
\]
\subsection{Dg functors as dg modules}\label{sec:2.2}
Let $\dg$ be a small dg category. A \emph{right dg $\dg$-module} is a dg functor
\[\dg^{op}\to\Cdgk.\]
We denote the dg category of right dg $\dg$-modules by $\Cdgd$. In other words, it is the functor dg category
\[
    \Cdgd=\dgFun(\dg^{op},\Cdgk).
\]
The Yoneda embedding of $\dg$ is the functor
\[\dg\to\Cdgd,\]
which is defined on objects by $X\mapsto\Xh= \dg(?,X)$. We have that the \emph{category of dg modules} $\C(\dg)$ is $Z^0(\Cdgd)$.

The \emph{derived category} $\D(\dg)$ is the localization of $\C(\dg)$ with respect to the class of quasi-isomorphisms. The \emph{perfect
derived category} $\per\dg$ is the full subcategory of compact objects of $\D(\dg)$. The \emph{perfectly valued derived category} $\pvd\dg$ is the full subcategory of dg modules $M$ of $\D(\dg)$ such that the underlying complex $M(X)$ is perfect over $\k$ for each $X$ in $\dg$. An important special case is the one when the dg category $\dg$ has a single object $\star$. We then identify $\dg$ with the dg algebra $A=\dg(\star,\star)$ and write $\D(A), \per A, \pvd A$ for the corresponding derived categories.

For two dg categories $\dg, \dgg$, a \emph{dg $\dg$-$\dgg$-bimodule} is a right $\dg^{op}\otimes\dgg$-module. We denote by $\rep(\dg,\dgg)$ the full subcategory of the dg derived category $\D_{dg}(\dg^{op}\otimes\dgg)$ whose objects are the cofibrant dg $\dg$-$\dgg$-bimodules $X$ which are \emph{right quasi-representable}, i.e. the $\dgg$-module $X(?,A)$ is quasi-isomorphic to a representable dg $\dgg$-module for each object $A$ of $\dg$. The details of dg derived categories can be found in \cite[Section 4.4]{K2}. We also set $\repp(\dg,\dgg)= H^0(\rep(\dg,\dgg))$. The composition
$$\rep(\dgg,\hh{E})\times\rep(\dg,\dgg)\to\rep(\dg,\hh{E})$$
is given by the tensor product $(Y,X)\mapsto YX= X\otimes_{\dgg} Y$. Notice that this coincides with the derived tensor product since we work over a field. Each $X\in\rep(\dg,\dgg)$ yields a derived functor
\[?\Lten_{\dg}X:\D(\dg)\to\D(\dgg),\]
and with the above definition, the derived functor associated with $X\otimes_{\dgg} Y$ is the composition of the functors
\[\begin{tikzcd}
\D(\dg)\arrow[r, "{?\Lten_{\dg}X}"]&\D(\dgg)\arrow[r, "{?\Lten_{\dgg}Y}"]&\D(\hh{E}).
\end{tikzcd}\]
We have a canonical dg functor
\begin{align}
\begin{split}\label{bijection}
\dgFun(\dg,\dgg)&\to\rep(\dg,\dgg)
\end{split}
\end{align}
taking a dg functor $F$ to $X_F:A\mapsto\D(?,FA)$.
By \cite{To}, we have a bijection between the set of isomorphism classes in $H^0(\rep(\dg,\dgg))$ and the set of morphisms $\dg\to\dgg$ in $\Hqe$.

\subsection{Differential bigraded (dbg) categories}
Let $\Grm$ be the category of $\ZZ$-graded $\k$-modules $M=\oplus_{p\in\ZZ}M_p$ with grading shift given by $(M[\XX])_p=M_{p+1}$.
It is a symmetric monoidal abelian $\k$-category.
Let $\Cdb$ be the dg category of complexes over $\Grm$, that is, we have $\Cdb=\C_{dg}(\Grm)$ and it is a monoidal dg category.
A \emph{differential bigraded category} (dbg category for short) $\dbg$ is a category enriched in $\Cdb$. In particular, if $\dbg$ is a dbg category with one object $\star$, then we can identify $\dbg$ with the dbg algebra $B=\dbg(\star,\star)$.

The \emph{bigraded derived category} $\D^{\ZZ}(\dbg)$ is the category of differential bigraded modules $M: \dbg^{op}\to\Cdb$ localized at the $s: M\to L$ such that $s_pX: L_pX\to M_pX$ is a quasi-isomorphism for any $p\in\ZZ$ and $X\in\obj(\dbg)$. It is not hard to check that, $\D^{\ZZ}(\dbg)$ is compactly generated by $\{\Xh[p\XX], p\in\ZZ, X\in\obj(\dbg)\}$. We define the perfect derived category $\per^{\ZZ}(\dbg)$ as the thick subcategory generated by $\{\Xh[p\XX], p\in\ZZ, X\in\obj(\dbg)\}$.  If $\k$ is regarded as a dbg algebra with trivial grading shift and we denote it $\kk$, then $\per^{\ZZ}(\kk)$ is the thick subcategory generated by $\kk[p\XX]$ for $p\in\ZZ$, i,e.
\[
    \per^{\ZZ}(\kk)=\{M\in\D(\kk)\mid\sum_{p,q\in\ZZ}\dim H^{p,q}(M)<\infty\}.
\]
Moreover, the perfectly valued derived category is
\[
    \pvd^{\ZZ}(\dbg)=\{M\in\D^{\ZZ}(\dbg)\mid MX\in\per^{\ZZ}(\kk), \forall X\in\obj(\dbg)\}.
\]
For $M,N\in\D^{\ZZ}(\dbg)$, we have that
\[\Hom_{\D^{\ZZ}(\dbg)}(L,M[p])=H^p\RHom_{\dbg}(L,M)\]
and
\[\Homz_{\D^{\ZZ}(\dbg)}(L,M[p])=H^p\RHomz_{\dbg}(L,M),\]
where $\Homz_{\D^{\ZZ}(\dbg)}(L,M[p])=\oplus_{q\in\ZZ}\Hom_{\D^{\ZZ}(\dbg)}(L,M[p+q\XX])$ and
\[\RHomz_{\dbg}(L,M[p])=\oplus_{q\in\ZZ}\RHom_{\dbg}(L,M[p+q\XX]).\]
\begin{remark}
If $A$ is a dg algebra considered as a dbg algebra concentrated in Adams degree 0, we have
\begin{equation}
\perz(A)=\thick(A[p\XX], p\in\ZZ)\subseteq\D^{\ZZ}(A).
\end{equation}
\end{remark}
\begin{remark}
The Adams $\ZZ$-grading can be any abelian group $(G,+)$ with identity $0$.
In this case, the dbg algebra is a dg $\k$-algebra endowed with a second $G$-grading, $B=\oplus_{g\in G}B_g$, such that the differential is of bidegree $|d|=(1,0)$, i.e. differential degree $1$ and Adams degree $0$.
A \emph{dbg $B$-module} is a dg $B$-module $M$ endowed with an Adams grading $M=\oplus_{g\in G}M_g$ such that the differential is of bidegree $|d_m|=(1,0)$.
There are two shift functors.
One is the shift of differential degree:
\[
    M\mapsto M[1]~\mbox{with}~(M[1])^p=M^{p+1}~\mbox{and}~d_{M[1]}=-d_M,
\]
and the other is the shift of Adams degree:
\[
    M\mapsto M[g]~\mbox{with}~(M[g])_h=M_{g+ h}~\mbox{and}~d_{M[g]}=d_M.
\]
\end{remark}

\subsection{Dg localizations, dg quotients and pretriangulated dg categories}\label{sec:2.3}
Let $\dg$ be a dg category and $S$ a set of morphisms of $H^0(\dg)$. We say that a morphism $G:\dg\to\dgg$ in $\Hqe$ makes $S$ \emph{invertible} if the induced functor $H^0(G):H^0(\dg)\to H^0(\dgg)$ takes each $s\in S$ to an isomorphism.
\begin{thmdef}[{\cite[Section 8.2]{To}}]
There is a universal morphism $\Ql:\dg\to\dg[S^{-1}]$ in $\Hqe$ which makes $S$ invertible, i.e. for any morphism $\dg\to\dgg$ of $\Hqe$ which makes $S$ invertible, there exists a unique morphism $\dg[S^{-1}]\to\dgg$ such that the following diagram
\begin{equation}
\begin{tikzcd}[column sep=32,row sep=30]
\dg \arrow[rd, "G"] \arrow[d, "\Ql"']           &    \\
{\dg[S^{-1}]} \arrow[r, "\exists ! \overline{G}"', dashed] & \dgg
\end{tikzcd}
\end{equation}
commutes. More precisely, the morphism $\Ql$ induces an isomorphism
\begin{equation}
    \Ql\colon\rep(\dg[S^{-1}],\dgg)\iso\rep(\dg,\dgg)_S
\end{equation}
in $\Hqe$, where $\rep(\dg,\dgg)_S$ is the full subcategory of $\rep(\dg,\dgg)$ with objects as dg bimodules that make $S$ invertible.
We call $\dg[S^{-1}]$ the \emph{dg localization} of $\dg$ at $S$.
\end{thmdef}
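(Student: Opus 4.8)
The plan is to realize $\dg[S^{-1}]$ as a cell attachment (a homotopy pushout) in the model category $\dgcat$ of \cite{T}, following Toën. First I would fix two building blocks. Let $\mathcal{P}$ be the \emph{free arrow} dg category: objects $0,1$, with $\mathcal{P}(0,0)=\mathcal{P}(1,1)=\k$, $\mathcal{P}(1,0)=0$ and $\mathcal{P}(0,1)=\k\cdot f$ concentrated in degree $0$ with $df=0$; it is (semifree, hence) cofibrant. Let $\mathcal{K}$ be a cofibrant dg category together with a cofibration $\iota\colon\mathcal{P}\to\mathcal{K}$ obtained by factoring the evident dg functor from $\mathcal{P}$ to the free $\k$-linear groupoid on one isomorphism $0\cong 1$ into a cofibration followed by a trivial fibration; thus each $\mathcal{K}(i,j)$ is a semifree dg $\k$-module with $H^0(\mathcal{K}(i,j))=\k$ and $H^n(\mathcal{K}(i,j))=0$ for $n\neq 0$, $H^0(\mathcal{K})$ is that groupoid, and $\iota$ sends $f$ to the distinguished arrow (one may use Drinfeld's explicit model). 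For each $s\in S$, pick a closed degree-$0$ representative $s\colon X_s\to Y_s$ (after replacing $\dg$ by a cofibrant model) and form the dg functor $\varphi_s\colon\mathcal{P}\to\dg$ with $0\mapsto X_s$, $1\mapsto Y_s$, $f\mapsto s$. Then set
\[
\dg[S^{-1}]\;=\;\dg\;\coprod_{\coprod_{s\in S}\mathcal{P}}\;\Bigl(\,\coprod_{s\in S}\mathcal{K}\,\Bigr),
\]
the pushout of $(\varphi_s)_{s\in S}$ along $\coprod_s\iota$, with structural morphism $\Ql\colon\dg\to\dg[S^{-1}]$. Since $\coprod_s\iota$ is a cofibration between cofibrant objects, this strict pushout is a homotopy pushout, so it represents the corresponding homotopy colimit in $\Hqe$. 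For each $s$ the composite $\mathcal{P}\xrightarrow{\iota}\mathcal{K}\to\dg[S^{-1}]$ equals $\Ql\circ\varphi_s$ and sends $f$ to $\Ql(s)$; as $f$ becomes an isomorphism in $H^0(\mathcal{K})$ and $H^0$ is a functor, $\Ql(s)$ is an isomorphism in $H^0(\dg[S^{-1}])$, i.e.\ $\Ql$ makes $S$ invertible.

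For the universal property I would apply the derived internal Hom $\mathbf{R}\hom(-,\dgg)$, which by derived Morita theory \cite{To} is quasi-equivalent to $\rep(-,\dgg)$ and, being the internal Hom of the closed symmetric monoidal structure on $\Hqe$, carries homotopy pushouts to homotopy pullbacks. This yields, in $\Hqe$,
\[
\rep(\dg[S^{-1}],\dgg)\;\simeq\;\rep(\dg,\dgg)\;\times^{h}_{\;\prod_{s\in S}\rep(\mathcal{P},\dgg)}\;\prod_{s\in S}\rep(\mathcal{K},\dgg).
\]
Here $\rep(\mathcal{P},\dgg)$ is quasi-equivalent to the dg category of morphisms of $\dgg$, and the key point (see below) is that the restriction $\iota^{*}\colon\rep(\mathcal{K},\dgg)\to\rep(\mathcal{P},\dgg)$ is quasi-fully faithful with essential image the full subcategory spanned by the morphisms that become isomorphisms in $H^0(\dgg)$. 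That subcategory being closed under isomorphisms, the homotopy pullback reduces to the full dg subcategory of $\rep(\dg,\dgg)$ on those bimodules $M$ for which $\varphi_s^{*}M$ is invertible in $H^0(\dgg)$ for every $s$, i.e.\ on those $M$ that make $S$ invertible; this is precisely $\rep(\dg,\dgg)_S$. Hence $\Ql$ induces the asserted isomorphism $\rep(\dg[S^{-1}],\dgg)\iso\rep(\dg,\dgg)_S$ in $\Hqe$; passing to isomorphism classes and invoking the bijection between $H^0(\rep(-,\dgg))$-classes and morphisms in $\Hqe$ \cite{To} gives both the existence of the factorization $\overline{G}$ and, since $\Ql^{*}$ is an isomorphism, its uniqueness.

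The hard part is the identification $\rep(\mathcal{K},\dgg)\simeq\{\text{morphisms of }\dgg\text{ invertible in }H^0(\dgg)\}$ inside $\rep(\mathcal{P},\dgg)$ — equivalently, that $\mathcal{K}$ is the homotopy-universal dg category inverting the arrow $f$ of $\mathcal{P}$. Establishing this requires a concrete contractible model of $\mathcal{K}$ and an obstruction-theoretic computation showing that, for a closed degree-$0$ morphism of $\dgg$ that is invertible in $H^0(\dgg)$, the space of extensions to a $\mathcal{K}$-structure (a compatible homotopy inverse and all higher coherence homotopies) is contractible; equivalently, that $\iota^{*}$ restricts to a quasi-equivalence from the relevant part of (a fibrant model of) $\dgFun(\mathcal{K},\dgg)$ onto that of $\dgFun(\mathcal{P},\dgg)$. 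The remaining inputs — cofibrancy of $\mathcal{P}$, the existence of the cofibration $\iota$, and the fact that the strict pushout above computes the homotopy pushout — are standard features of $\dgcat$. All of this goes back to Toën \cite[Section~8.2]{To}; the above is the streamlined route I would take.
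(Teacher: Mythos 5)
This statement is recalled in the paper without proof, as a quotation of To\"en \cite[Section 8.2]{To}, and your sketch is precisely To\"en's construction: realize $\dg[S^{-1}]$ as a homotopy pushout gluing a cofibrant interval $\mathcal{K}$ along the free arrow $\mathcal{P}$ for each $s\in S$, then apply the derived internal Hom $\rep(-,\dgg)$ to convert it into a homotopy pullback and identify the result with $\rep(\dg,\dgg)_S$. The outline is correct, and the one substantive ingredient --- that restriction along $\mathcal{P}\to\mathcal{K}$ is quasi-fully faithful with essential image the morphisms invertible in $H^0(\dgg)$ --- is correctly singled out as the key lemma, though deferred to To\"en rather than carried out.
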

Let $\N$ be a full dg subcategory of $\dg$. We say that a morphism $G:\dg\to\dgg$ of $\Hqe$ \emph{annihilates} $\N$ if the induced functor $H^0(G):H^0(\dg)\to H^0(\dgg)$ takes all objects of $\N$ to zero objects.
\begin{thmdef}[{\cite{D,K2}}]
There is a universal morphism $\p:\dg\to\dg/\N$ in $\Hqe$ which annihilates $\N$, i.e. for any morphism $\dg\to\dgg$ of $\Hqe$ which annihilates $\N$, there exists a unique morphism $\dg/\N\to\dgg$ such that the following diagram
\begin{equation}
\begin{tikzcd}[column sep=30,row sep=30]
\dg \arrow[rd, "G"] \arrow[d, "\p"']           &    \\
{\dg/\N} \arrow[r, "\exists ! \overline{G}"', dashed] & \dgg
\end{tikzcd}
\end{equation}
commutes. More precisely, the morphism $\p$ induces an isomorphism
\begin{equation}
    \p\colon\rep(\dg/\N)\iso\rep(\dg,\dgg)_{\N}
\end{equation}
in $\Hqe$, where $\rep(\dg,\dgg)_{\N}$ is the full subcategory of $\rep(\dg,\dgg)$ with objects as dg modules that annihilate $\N$. We call $\dg/\N$ the \emph{dg quotient} of $\dg$ by $\N$.
\end{thmdef}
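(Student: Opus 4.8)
Since this is a universal property, the plan is to exhibit an explicit model of $\dg/\N$ and verify the property directly. By the bijection of \cite{To} between morphisms $\dg\to\dgg$ in $\Hqe$ and isomorphism classes of objects of $\repp(\dg,\dgg)=H^0(\rep(\dg,\dgg))$, the mapping property of the diagram is precisely the induced bijection on isomorphism classes, and the ``more precise'' claim upgrades it to a quasi-equivalence of dg categories; so it is enough to (a) construct $\dg/\N$ and $\p$, (b) check that $\p$ annihilates $\N$, and (c) prove that restriction along $\p$ is a quasi-equivalence $\rep(\dg/\N,\dgg)\iso\rep(\dg,\dgg)_{\N}$. For (a) I would use Drinfeld's model: $\dg/\N$ has the same objects as $\dg$, with morphism complexes obtained from those of $\dg$ by freely adjoining, for each object $N$ of $\N$, a morphism $\varepsilon_N\colon N\to N$ of degree $-1$ with $d(\varepsilon_N)=\id_N$; thus $(\dg/\N)(X,Y)$ is spanned by zig-zags $X\to N_1\xrightarrow{\varepsilon_{N_1}}N_1\to\cdots\to N_k\xrightarrow{\varepsilon_{N_k}}N_k\to Y$ through objects of $\N$, increasingly filtered by the number $k$ of $\varepsilon$'s, and $\p$ is the identity on objects and the inclusion of the $k=0$ summand. (Equivalently, $\dg/\N$ represents the homotopy cofibre of $\N\hookrightarrow\dg$ in $\dgcat$; since $\k$ is a field, every tensor product below is already derived.) Claim (b) is immediate: $\id_N=d(\varepsilon_N)$ is a boundary in $(\dg/\N)(N,N)$, so $N\cong 0$ in $H^0(\dg/\N)$; hence any bimodule restricted along $\p$ annihilates $\N$, so restriction really does land in $\rep(\dg,\dgg)_{\N}$.

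For (c) I would first remove the $\dgg$-variable. Right-quasi-representability concerns only the $\dgg$-factor and is preserved and reflected by restriction in the $\dg$-factor; moreover Drinfeld's construction is compatible with tensoring by a fixed dg category (adjoining the $\varepsilon_N$ commutes with $-\otimes\dgg$), so the dg quotient appearing in the bimodule categories may be taken inside the $\dg$-variable. This reduces (c) to the one-variable assertion: restriction of scalars along $\p$ is a quasi-fully-faithful dg functor on module categories whose essential image, on the level of $\D$, is the full subcategory of dg $\dg$-modules $M$ with $M(N)$ acyclic for all $N$ in $\N$. Quasi-full-faithfulness, i.e.\ $\RHom_{\dg/\N}(M,M')\iso\RHom_{\dg}(\p^*M,\p^*M')$, I would deduce from the $\varepsilon$-length filtration on $\dg/\N$ (equivalently, from a bar-type resolution of $\dg/\N$ over $\dg$): the resulting spectral sequence expresses the difference between the two $\RHom$ complexes in terms of the $M'(N)$, which are acyclic when $M'$ is in the essential image. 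For essential surjectivity, given a module $M$ with all $M(N)$ acyclic I would pass to a homotopically injective model, observe that an acyclic homotopically injective complex is contractible, make a coherent choice of contracting homotopies, and use it to promote $M$ to a dg $(\dg/\N)$-module restricting back to $M$. Preservation of compactness and of right-quasi-representability (so that the argument stays inside $\rep$) then follows since $\thick$-generation and the relevant finiteness conditions are respected.

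The delicate point---and the reason one proves the bimodule statement rather than arguing with dg functors---is the passage from \emph{strict} dg functors to \emph{morphisms of $\Hqe$}. A morphism $\dg\to\dgg$ of $\Hqe$ annihilating $\N$ is not a priori induced by a strict dg functor sending each object of $\N$ to a strictly contractible object together with a coherent family of nullhomotopies, so one cannot literally ``extend along $\p$ by $\varepsilon_N\mapsto h_N$''. Recasting everything via bimodules and the derived restriction functor sidesteps this; within that framework the remaining work is the essential-surjectivity step of (c), where the explicit contraction of the $\varepsilon_N$-action on a fibrant model is needed and where the arguments of Drinfeld and Keller (\cite{D,K2}) must be carried out with care.
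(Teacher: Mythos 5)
The paper does not prove this Theorem/Definition but recalls it from Drinfeld and Keller \cite{D,K2}, and your proposal follows exactly that standard route: Drinfeld's explicit model obtained by freely adjoining contracting homotopies $\varepsilon_N$, reduction to the statement that restriction along $\p$ is (quasi-)fully faithful with essential image the modules annihilating $\N$, and To\"en's dictionary between morphisms in $\Hqe$ and isomorphism classes in $\repp$. So the proposal is correct and takes essentially the approach of the paper's cited sources; the one point to treat with more care is your parenthetical claim that adjoining the $\varepsilon_N$ ``commutes with $-\otimes\dgg$'': the dg categories $(\dg/\N)\otimes\dgg$ and $(\dg\otimes\dgg)/(\N\otimes\dgg)$ are not isomorphic (the latter adjoins a homotopy for every pair of objects), only canonically quasi-equivalent over the field $\k$, and this lemma needs its own short argument before the quotient can be moved inside the $\dg$-variable of the bimodule categories.
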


The small dg category $\dg$ is said to be \emph{pretriangulated} if the image of the Yoneda functor
\begin{align*}
H^0(\dg)&\to H^0(\Cdgd)
\end{align*}
is stable under shifts in both directions and extensions. We say that $\dg$ is \emph{strictly pretriangulated} if the image of the Yoneda functor
\begin{align*}
Z^0(\dg)&\to Z^0(\Cdgd)
\end{align*}
is stable under shifts in both directions in $Z^0(\Cdgd)$ and under graded split extensions. In this case $Z^0(\dg)$ is a Frobenius category when endowed with the graded split short exact sequences and $H^0(\dg)$ is a triangulated category.

\begin{thmdef}[{\cite[Section 4.5]{K2}}]
There is a canonical morphism in $\Hqe$ \begin{equation}
\dg\to\pretr(\dg),
\end{equation}
where $\pretr(\dg)$ is pretriangulated such that for any pretriangulated dg category $\dgg$ and any morphism $\dg\to\dgg$ of $\Hqe$, there exists a unique morphism $\pretr(\dg)\to\dgg$ which makes the following diagram
\begin{equation}\label{uni:pre}
\begin{tikzcd}[column sep=30,row sep=30]
\dg \arrow[rd, "G"] \arrow[d, hook] & \\\pretr(\dg) \arrow[r,"\exists ! \overline{G}"', dashed]  & \dgg
\end{tikzcd}
\end{equation}
commutative. We call $\pretr(\dg)$ the \emph{pretriangulated hull} of $\dg$ and define the \emph{triangulated hull} of $\dg$ as $H^0(\pretr(\dg))$.
\end{thmdef}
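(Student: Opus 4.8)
The plan is to first realize $\pretr(\dg)$ as an explicit dg category, check that it is strictly pretriangulated, pin down the canonical embedding, and then deduce the universal property by transporting bimodules along that embedding. Following \cite{K2} (and, in essence, Bondal--Kapranov), I would take $\pretr(\dg)$ to be the dg category of one-sided twisted complexes over $\dg$: an object is a finite formal sum $\bigoplus_{i=1}^{n}C_i[r_i]$ with $C_i\in\obj(\dg)$ and $r_i\in\ZZ$, equipped with a strictly upper triangular matrix $q=(q_{ij})$, $q_{ij}\in\dg(C_j,C_i)$ homogeneous of degree $r_i-r_j+1$, subject to $d(q)+q\cdot q=0$; morphism complexes are matrices of morphisms of $\dg$ of the appropriate degrees carrying the $q$-twisted differential. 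Equivalently, and more conveniently below, $\pretr(\dg)$ is the smallest full dg subcategory of $\Cdgd$ containing the representables $\dg(?,C)$ and closed under shifts in both directions and under mapping cones of closed degree-$0$ morphisms, the dg Yoneda embedding matching a twisted complex with the corresponding iterated cone of shifted representables. A routine but lengthy verification shows this is a dg category.

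Since $\Cdgd$ is strictly pretriangulated and $\pretr(\dg)$ is by construction a full subcategory of it closed under shifts and under mapping cones of closed degree-$0$ morphisms (hence under graded split extensions), $Z^0(\pretr(\dg))$ is a Frobenius category for the graded split exact structure, $H^0(\pretr(\dg))$ is triangulated, and $\pretr(\dg)$ is (strictly) pretriangulated. The corestriction $\iota\colon\dg\hookrightarrow\pretr(\dg)$ of the Yoneda embedding $C\mapsto\dg(?,C)$ is fully faithful on morphism complexes by the dg Yoneda lemma; this is the canonical morphism of the statement. Moreover $\pretr(-)$ is functorial in dg functors (a dg functor sends twisted complexes to twisted complexes, giving $\pretr(F)$ with $\pretr(F)\circ\iota_\dg=\iota_\dgg\circ F$) and sends quasi-equivalences to quasi-equivalences, because the morphism complexes of $\pretr(\dg)$ are finite iterated cones built from those of $\dg$ and such cones preserve quasi-isomorphisms, essential surjectivity on $H^0$ being clear. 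Applying this when the source is already pretriangulated: $\iota_\dgg\colon\dgg\to\pretr(\dgg)$ is then a quasi-equivalence, since by definition of pretriangulated the essential image of the Yoneda embedding is closed under shifts and extensions (hence cones), so every twisted complex over $\dgg$ is isomorphic to a representable.

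For the universal property, fix $\dgg$ pretriangulated. The objects of $\dg$, viewed in $\D(\pretr(\dg))$, are compact and generate, because every representable $\pretr(\dg)(?,Y)$ is, for $Y$ a twisted complex, an iterated cone of shifted copies of the $\pretr(\dg)(?,C_i)$ with $C_i\in\dg$. Hence restriction along $\iota$ is an equivalence $\D(\pretr(\dg))\iso\D(\dg)$ by derived Morita theory \cite{K2}; since $\dg^{op}$ compactly generates $\D(\pretr(\dg)^{op})$ in the same way and $\otimes_\k$ is exact, restriction along $\iota^{op}\otimes\id_\dgg$ is an equivalence $\D(\pretr(\dg)^{op}\otimes\dgg)\iso\D(\dg^{op}\otimes\dgg)$, and likewise for the dg-enhanced versions. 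The crucial point is that this equivalence matches right quasi-representable bimodules: for the non-trivial implication, let $X$ be a $\dg$-$\dgg$-bimodule with $X(?,C)$ quasi-isomorphic to a representable $\dgg$-module for all $C\in\dg$; for $Y=\bigoplus C_i[r_i]$ a twisted complex over $\dg$, the $\dgg$-module $X(?,Y)$ is an iterated cone of shifted copies of the $X(?,C_i)$, hence --- \emph{this is where pretriangulatedness of $\dgg$ is used}, the Yoneda image in $\D(\dgg)$ being closed under shifts and cones --- again quasi-isomorphic to a representable $\dgg$-module. Thus restriction along $\iota$ yields a quasi-equivalence $\iota^{*}\colon\rep(\pretr(\dg),\dgg)\iso\rep(\dg,\dgg)$.

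Finally, by \cite{To} the morphisms $\pretr(\dg)\to\dgg$, resp.\ $\dg\to\dgg$, of $\Hqe$ are the isomorphism classes of objects of $H^0(\rep(\pretr(\dg),\dgg))$, resp.\ $H^0(\rep(\dg,\dgg))$, and precomposition with $\iota$ corresponds to $\iota^{*}$; since $\iota^{*}$ is a quasi-equivalence it is a bijection on isomorphism classes of $H^0$. Hence for each $G\colon\dg\to\dgg$ in $\Hqe$ there is a unique $\overline G\colon\pretr(\dg)\to\dgg$ in $\Hqe$ with $\overline G\circ\iota=G$, i.e.\ making \eqref{uni:pre} commute. (Existence can alternatively be seen by representing $G$ by a genuine dg functor out of a cofibrant replacement $Q\dg\xrightarrow{\sim}\dg$, applying $\pretr(-)$, and postcomposing with a quasi-inverse of the quasi-equivalence $\iota_\dgg$.) The main obstacle is the crucial point above: correctly identifying which $\pretr(\dg)$-$\dgg$-bimodules are right quasi-representable, which is precisely where the hypothesis that $\dgg$ be pretriangulated enters; by comparison the explicit construction of $\pretr(\dg)$, the proof that it is strictly pretriangulated, and the functoriality of $\pretr(-)$ are standard but bookkeeping-heavy.
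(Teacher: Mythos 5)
Your proposal is correct, and it is essentially the argument behind the statement as it appears in the literature: the paper itself records this as a Theorem/Definition cited from \cite[Section 4.5]{K2} and gives no proof, so the relevant comparison is with the standard construction, which you reproduce faithfully (twisted complexes / closure of the Yoneda image under shifts and cones, quasi-equivalence $\dgg\to\pretr(\dgg)$ for $\dgg$ pretriangulated, Morita-theoretic identification $\rep(\pretr(\dg),\dgg)\iso\rep(\dg,\dgg)$, and To\"en's description of $\Hqe$-morphisms as isomorphism classes in $H^0(\rep(-,-))$). You also correctly isolate the one genuinely delicate point, namely that the restriction equivalence matches the right quasi-representable bimodules, which is exactly where pretriangulatedness of $\dgg$ is used; no gap.
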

The functor $\pretr$ preserves quasi-equivalences \cite{D} and is left adjoint to the inclusion of the full subcategory of pretriangulated dg categories into the homotopy category $\Hqe$.

\section{Construction and universal properties of orbit categories}\label{sec:3}

\subsection{The left-lax-equivariant category and quotient}\label{sec:llEq}
Let $\dg_1,\dg_2,\dgg_1,\dgg_2$ be dg categories and $F_1\in\rep(\dg_1,\dg_2), F_2\in\rep(\dgg_1,\dgg_2)$ be two dg bimodules. We define a dg category associated to these data as follows.

\begin{definition}\label{pairsNlleq}
We define the dg category $\Pairs(F_1, F_2)$ as follows:
\begin{itemize}
\item objects are pairs $(G_1,G_2,\gamma)$, where $G_i$ is an $\dg_i$-$\dgg_i$-bimodule in $\rep(\dg_i,\dgg_i)$ for $i=1,2$ and $\gamma: G_2F_1\to F_2G_1$ between dg bimodules (which are not required to be isomorphisms); and
\item the complex of morphisms from $(G_1,G_2,\gamma)$ to $(G_1',G_2',\gamma')$ is defined as the homotopy kernel of the morphism
\begin{equation}
\Hom^{\bullet}_{\rep(\dg_1,\dgg_1)}(G_1,G_1')\oplus \Hom^{\bullet}_{\rep(\dg_2,\dgg_2)}(G_2,G_2')\to\Hom^{\bullet}_{\rep(\dg_1,\dgg_2)}(G_2F_1,F_2G_1')
\end{equation}
sending $(b_1,b_2)$ to $\gamma'\circ b_2F_1-F_2b_1\circ\gamma$.
\end{itemize}
\end{definition}
Concretely, a morphism of degree $n$ from $(G_1,G_2,\gamma)$ to $(G_1',G_2',\gamma'))$ is given by a pair $(b_1,b_2,h)$, where $b_i\in\Hom^n(G_i,G_i')$ for $i=1,2$ and $h\in\Hom^{n-1}(G_2F_1,F_2G_1')$ with differential given by
\begin{equation}\label{eq:d}
d(b_1,b_2,h)=(db_1,db_2, dh-(-1)^{|b_1|}F_2b_1\circ\gamma+(-1)^{|b_2|}\gamma'\circ b_2F_1).
\end{equation}
\[\begin{tikzcd}[column sep=30,row sep=30]
G_2F_1 \ar[r,"\gamma"]\ar[d,"b_2F_1"']\ar[rd,"h",dashed]&F_2G_1\ar[d,"F_2b_1"]\\ G_2'F_1\ar[r,"\gamma'"']&F_2G_1'\end{tikzcd}\]
The identity is the natural morphism $\id: G_i\to G_i$ with $h=0$ since the square above obviously commutes when $b_i=\id$ and $h=0$. The composition of $(b_1,b_2,h): (G_1,G_2,\gamma)\to(G_1',G_2',\gamma')$ with $(b_1',b_2',h'): (G_1',G_2',\gamma')\to(G_1'',G_2'',\gamma'')$ is given by
\begin{equation}
(b_1',b_2',h')\circ(b_1,b_2,h)=(b_1'b_1,b_2'b_2,F_2b_1'\circ h+(-1)^{|b_2|}h'\circ b_2F_1).
\end{equation}
\[\begin{tikzcd}[column sep=35,row sep=35]
G_2F_1 \ar[r,"\gamma"]\ar[d,"b_2F_1"]\ar[rd,"h",dashed]\arrow[dd, bend left=-30, "(b_2'\circ b_2)F_1"']&F_2G_1\ar[d,"F_2b_1"']\arrow[dd, bend left=30, "F_2(b_1'\circ b_1)"]\\ G_2'F_1\ar[r,"\gamma'"]\ar[d,"b_2'F_1"]\ar[rd,"h'",dashed]&F_2G_1'\ar[d,"F_2b_1'"']\\G_2''F_1\ar[r,"\gamma''"]&F_2G_1''
\end{tikzcd}\]
\begin{remark}
The category we defined above is a dg category.
Firstly, by the definition in \eqref{eq:d}, the map $d$ is a differential.
Moreover, one easily checks that the graded Leibniz rule holds.
\end{remark}
Let $\dg$ be a dg category and $F\in\rep(\dg,\dg)$ a dg bimodule.
\begin{definition}\label{Nlleq}
For a dg category $\dgg$, the \emph{$\NN$-left-lax-equivariant category} $\Nlleq$ of $\NN$-left-lax-equivariant functors from $(\dg,F)$ to $(\dgg,\id_{\dgg})$ is defined to be the dg category $\Pairs(F, \id_{\dgg})$.
\begin{remark}
Concretely, objects in $\Nlleq$ are pairs $(G, \gamma)$, where $G$ is an $\dg$-$\dgg$-bimodule in $\rep(\dg,\dgg)$  and $\gamma: GF\to G$ is a morphism of dg bimodules. Notice that we do not require $\gamma$ to be an isomorphism.
\end{remark}

For two dg categories $\dgg$ and $\dgg'$, a bimodule $H\in\rep(\dgg,\dgg')$ induces a functor from $\Nlleq$ to $\mathbf{R}\NN\text{-llEq}(\dg, F, \dgg')$, which sends $(G, \gamma)$ to $(HG, H\gamma)$.
\end{definition}
Next we define $\CllFN$, the functor $\QN: \dg\to \CllFN$ and the morphism of functors $q: \QN F\to \QN$ as follows.
\begin{definition}\label{def:CllFZ}
The dg category $\CllFN$, which is called the \emph{left lax quotient of $\dg$ by $F$,} is the dg category whose
\begin{itemize}
  \item objects are the same as the objects of $\dg$, and
  \item morphisms are given by $\CllFN(X, Y) = \bigoplus_{p\in \NN}\dg(X, F^pY)$.
\end{itemize}
\end{definition}
The identities and compositions are the natural ones:
\begin{itemize}
  \item identities: $\id_X\in\dg(X,X)\subseteq \bigoplus_{p\in \NN}\dg(X, F^pX)$;
  \item compositions: given $f\in \dg(X, F^pY)$ and $g\in \dg(Y, F^qZ)$, their composition is $g\diamond f=(F^pg)\circ f.$
\end{itemize}
\begin{remark}
Let us check that $\CllFN$ is indeed a dg category. The graded Leibniz rule\begin{equation}d(g\diamond f)= d(g)\diamond f+(-1)^{|g|}g\diamond d(f)\end{equation}
holds in $\CllFN$ because it holds in $\dg$ and $F$ is a dg functor. Indeed, we have
\begin{equation}
\begin{split}
  d(g\diamond f) & =d\big((F^pg)\circ f \big) = d(F^pg)\circ f+(-1)^{|F^pg|}(F^pg)\circ d(f)\\
   & = \big(F^pd(g)\big)\circ f+(-1)^{|g|}(F^pg)\circ d(f)\\
   &=d(g)\diamond f+(-1)^{|g|}g\diamond d(f).
\end{split}
\end{equation}
\end{remark}
\def\TCF{T_{\dg}(F)}
\begin{remark}\label{rmk:quasi}
For a dg category $\dg$ and a cofibrant dg $\dg$-$\dg$-bimodule $F$, the dg category $\CllFN$ is isomorphic to the tensor category $\TCF$ defined by
\[\TCF=\dg\oplus F\oplus (F\otimes_{\dg}F)\oplus\cdots\oplus F^{\otimes_{\dg}^n}\oplus\cdots.\]
Indeed, we can regard $\dg$ as the identity bimodule $(X, Y)\mapsto\dg(X,Y)$ and we have an isomorphism of bimodules
\[\dg(X,F^pY)=\dg(X,F\otimes_{\dg}\cdots\otimes_{\dg}FY)=(F\otimes_{\dg}\cdots\otimes_{\dg}F)(X,Y),\]
using the translation between functors and bimodules. For an $\CllFN$-$\dgg$-bimodule $G$, there is an exact sequence:
\[\begin{tikzcd}[column sep=15,row sep=23]
0 \arrow[r] & \TCF\otimes_{\dg} F\otimes_{\dg} G \arrow[r, "\phi"] & \TCF\otimes_{\dg} G \arrow[rr] \arrow[rd] & &G \arrow[r]&0.\\
            &   &                                     & \Cone(\phi) \arrow[ru, "quasi-iso."'] &
\end{tikzcd}\]
\end{remark}
The canonical dg functor $\QN:\dg\to \CllFN$ acts on
\begin{itemize}
  \item objects: it sends $X\in\obj(\dg)$ to $X\in\obj(\CllFN)$, and
  \item morphisms: it sends $f:X\to Y$ to $f \in\dg(X,Y)\subseteq \oplus_{p\in \NN}\dg(X, F^pY)$.
\end{itemize}
The canonical morphism of dg functors $q: \QN F\to \QN$ acts on the objects of $\dg$ as
\begin{equation}
    qX =\id_{FX}\in\dg(FX,FX)\subseteq \bigoplus_{p\in \NN}\dg(FX, F^{p+1}X).
\end{equation}
We use the following diagram
\[\begin{tikzcd}
\QN FX:FX \arrow[rd, "\id_{FX}"] & &&&  \\
\QN X:X&FX&F^2X& F^3X& \cdots
\end{tikzcd}\]
to illustrate the definition.

For morphisms in $\CllFN$, we have the factorization property. That is, each morphism $f:\QN X\to \QN Y$ uniquely decomposes into a finite sum $f=f_0+f_1+\cdots+f_N, N\in\NN$ and each $f_p$ uniquely factors as $f_p = q^pY\circ \QN g_p$ for some $g_p: X\to F^pY.$ Here we write $q^pX$ to denote the composition of morphisms
\begin{equation}\begin{tikzcd}[column sep=39,row sep=30]
\QN X &\QN FX \arrow[l, "qX"'] & \QN F^2X \arrow[l, "qFX"'] & \cdots \arrow[l, "qF^2X"'] & \QN F^pX .\arrow[l, "qF^{p-1}X"'] \arrow[llll, "\eqcolon\,q^pX", bend left=18]
\end{tikzcd}\end{equation}
In the following proposition, we use the factorization property to check the universal property of the left lax quotient.
\begin{proposition}\label{equi-CllNll}
For any dg category $\dgg$, we have an isomorphism in $\Hqe$
\begin{align}\label{eq:equi-CllNll}
\begin{split}
    \Phi: \rep(\CllFN, \dgg) &\to \Nlleq\\
     G &\mapsto (G\QN, Gq).
\end{split}
\end{align}
\end{proposition}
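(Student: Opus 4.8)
The plan is to build the inverse functor explicitly and then verify that the two composites are isomorphic to the identity, all within the framework of $\rep$-categories, so that the claimed isomorphism holds in $\Hqe$. In one direction we already have $\Phi$, sending a bimodule $G\in\rep(\CllFN,\dgg)$ to the pair $(G\QN, Gq)$, where $G\QN$ is the restriction along $\QN\colon\dg\to\CllFN$ (a bimodule in $\rep(\dg,\dgg)$ since restriction along a dg functor preserves right quasi-representability) and $Gq\colon (G\QN)F\to G\QN$ is obtained by applying $G$ to the canonical morphism $q\colon\QN F\to\QN$. For the reverse direction, given $(G,\gamma)$ with $G\in\rep(\dg,\dgg)$ and $\gamma\colon GF\to G$, I would use the description of $\CllFN$ as the tensor category $T_\dg(F)=\dg\oplus F\oplus(F\otimes_\dg F)\oplus\cdots$ from Remark~\ref{rmk:quasi}: a right $T_\dg(F)$-module amounts to a right $\dg$-module $G$ together with an action map $GF=G\otimes_\dg F\to G$ (no coherence condition is needed because $F$ appears freely in the tensor algebra), so the pair $(G,\gamma)$ determines a $\CllFN$-$\dgg$-bimodule $\Psi(G,\gamma)$, and one checks $\Psi(G,\gamma)$ is still right quasi-representable since on each object it agrees, up to the triangle of Remark~\ref{rmk:quasi}, with the quasi-representable $\dg$-module underlying $G$.

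The heart of the argument is then naturality and the two round-trip identities. For $\Phi\circ\Psi$: restricting $\Psi(G,\gamma)$ along $\QN$ recovers $G$ on the nose, and applying $\Psi(G,\gamma)$ to $q$ recovers $\gamma$, essentially by unwinding the definition of the $T_\dg(F)$-action; so $\Phi\Psi\cong\id$. For $\Psi\circ\Phi$: given $G\in\rep(\CllFN,\dgg)$, the bimodule $\Psi(G\QN, Gq)$ is the one whose underlying $\dg$-module is $G\QN$ with the $F$-action $Gq$; the factorization property proved just before the proposition — every morphism $\QN X\to\QN Y$ in $\CllFN$ decomposes uniquely as $\sum_p q^pY\circ\QN g_p$ with $g_p\colon X\to F^pY$ — shows that the $T_\dg(F)$-module structure on $G$ is completely recovered from its restriction along $\QN$ together with the action of the morphisms $q$, hence the canonical comparison map $\Psi(G\QN,Gq)\to G$ is an isomorphism of $\CllFN$-$\dgg$-bimodules, natural in $G$. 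I would phrase this last step using the exact sequence at the end of Remark~\ref{rmk:quasi}, which exhibits any $\CllFN$-$\dgg$-bimodule as (quasi-isomorphic to) the cone of $T_\dg(F)\otimes_\dg F\otimes_\dg G\to T_\dg(F)\otimes_\dg G$, i.e. as freely generated over $T_\dg(F)$ by its underlying $\dg$-module modulo the $F$-action relation — which is exactly the data $(G\QN, Gq)$.

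To promote this to an isomorphism in $\Hqe$, I would note that both $\Phi$ and $\Psi$ are induced by explicit assignments on bimodules that are compatible with the dg structure and with quasi-isomorphisms, so they descend to mutually inverse functors on the $H^0$-level; since an isomorphism in $\Hqe$ between dg categories of this shape is detected by an equivalence after passing to $\rep$ (equivalently, a quasi-equivalence of the associated dg categories of such bimodules), the pair $(\Phi,\Psi)$ furnishes the desired isomorphism. Concretely it suffices to check $\Phi$ is quasi-fully faithful and quasi-essentially surjective: essential surjectivity is $\Psi$ together with $\Phi\Psi\cong\id$, and quasi-full-faithfulness amounts to showing the morphism complex $\Hom^\bullet_{\rep(\CllFN,\dgg)}(G,G')$ maps quasi-isomorphically to the morphism complex in $\Nlleq=\Pairs(F,\id_\dgg)$ between $(G\QN,Gq)$ and $(G'\QN,G'q)$ — and this again follows from the factorization property, which identifies $\Hom^\bullet_{\CllFN}(G,G')$ with the homotopy kernel computing morphisms of pairs, matching Definition~\ref{pairsNlleq}.

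\textbf{Main obstacle.} The delicate point is the bookkeeping in the quasi-full-faithfulness step: one must match the homotopy-kernel (cone) that defines morphisms in $\Pairs(F,\id_\dgg)$ with the complex $\Hom^\bullet_{\rep(\CllFN,\dgg)}(G,G')$, and this requires handling the derived/cofibrant replacements implicit in $\rep$ carefully, plus keeping the Koszul signs in \eqref{eq:d} consistent with the signs coming from the tensor-algebra differential. Everything else is formal once the tensor-algebra description of $\CllFN$ and the factorization property are in hand.
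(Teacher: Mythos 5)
Your proposal is correct and follows essentially the same route as the paper: the quasi-full-faithfulness check via the tensor-algebra (bar-type) resolution of Remark~\ref{rmk:quasi}, identifying $\Hom^\bullet_{\rep(\CllFN,\dgg)}(G,G')$ with the homotopy kernel defining morphisms in $\Pairs(F,\id_\dgg)$, and quasi-essential surjectivity via the observation that a module over $T_\dg(F)$ is just a $\dg$-module with an action of $F$ (the paper's construction of $G_0$ from $(H,\eta)$, followed by a cofibrant replacement). The only caveat, which you already flag, is that the round-trip identities hold only up to quasi-isomorphism once the cofibrant replacements implicit in $\rep$ are inserted.
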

\begin{proof}
We prove that it is a quasi-equivalence in $\dgcat$, which induces an isomorphism in $\Hqe$. We need to check that $\Phi$ is quasi-fully faithful and quasi-essentially surjective.

For $G\in\rep(\CllFN,\dgg)$, we have a quasi-isomorphism
\begin{equation}\label{quasi}
\Cone(\TCF\otimes_{\dg}F\otimes_{\dg}G\to\TCF\otimes_{\dg}G)\to G
\end{equation}
by \Cref{rmk:quasi}.
The composition $G\QN$ is the restriction of $G$ from $\TCF^{op}\otimes\dgg$ to $\dg^{op}\otimes\dgg$ and $G\QN F$ is the restricted bimodule $G$ composed with $F$. Clearly, the restriction $G\QN$ is still right quasi-representable. Since $\TCF$ is cofibrant as a left dg $\dg$-module and $G$ is cofibrant over $\TCF^{op}\otimes\dgg$, the bimodule $G\QN$ is cofibrant over $\dg^{op}\otimes\dgg$. Thus, we have $G\QN\in\rep(\dg,\dgg)$.

Recall that $\rep(\TCF,\dgg)$ is a full subcategory of $\D_{dg}(\TCF^{op}\otimes\dgg)$. For any two objects $G,G'\in\rep(\CllFN, \dgg)$, we have
\begin{align}\label{quasi-ff}
\begin{split}
&\Hom^{\bullet}_{\rep(\TCF,\dgg)}(G,G')\\
\cong&\RHom_{\D_{dg}(\TCF^{op}\otimes\dgg)}(G,G')\\
\iso&\RHom_{\D_{dg}(\TCF^{op}\otimes\dgg)}(\Cone(\TCF\otimes_{\dg}F\otimes_{\dg}G\to\TCF\otimes_{\dg}G),G')\\
\cong&\Hom^{\bullet}_{\rep(\TCF,\dgg)}(\Cone(\TCF\otimes_{\dg}F\otimes_{\dg}G\to\TCF\otimes_{\dg}G),G')\\
\cong&\Sigma^{-1}\Cone\big(\Hom^{\bullet}_{\rep(\TCF,\dgg)}(\TCF\otimes_{\dg}G,G')\\
    &\hskip 2cm \to\Hom^{\bullet}_{\rep(\TCF,\dgg)}(\TCF\otimes_{\dg}F\otimes_{\dg}G,G')\big)\\
\cong&\Sigma^{-1}\Cone\big(\Hom^{\bullet}_{\rep(\dg,\dgg)}(G,G')\to\Hom^{\bullet}_{\rep(\dg,\dgg)}(F\otimes_{\dg}G,G')\big)\\
=&\Hom_{\Nlleq}(\Phi G,\Phi G').
\end{split}
\end{align}
Here, the second quasi-isomorphism follows from \eqref{quasi}, the third isomorphism follows from the fact that both the cone and $G'$ are cofibrant objects, the fourth isomorphism follows from the property of complexes when taking cones, the fifth isomorphism follows from restriction, and the last one is by \Cref{Nlleq}. Moreover, by \eqref{quasi-ff}, we also obtain that $\Phi$ is a dg functor, i.e. compatible with the differentials in the corresponding morphism spaces.

Next, we show the essential surjectivity of $H^0\Phi$. Given an object $(H,\eta)\in \Nlleq$, we construct the corresponding object $G\in \rep(\CllFN, \dgg)$ such that $G\QN=H$ and $Gq=\eta$ hold on objects. For any $\QN X\in\CllFN$ and $Z\in\dgg$, we define the right $\dgg$-module $G_0$ by
\begin{equation}G_0(Z,\QN X)= H(Z,X).\end{equation}
Note that $G_0$ is right quasi-representable because $H$ is. Now we define a left $\CllFN$-module structure on $G_0$. For any morphism $f: \QN X\to\QN Y$ in $\CllFN$, we define the left action of $f$ on $G_0$ using the decomposition $f=f_0+f_1+\cdots+f_N$ of $f$ as above. We define the left multiplication $\lambda_f:G_0(Z,\QN X)\to G_0(Z,\QN Y)$ by
\begin{equation}\lambda_f(g)=\sum_{p=0}^N \eta^p\circ\lambda_{f_p}(g),\end{equation}
where $g\in G_0(Z,\QN X)$ and $\lambda_{f_p}:H(Z,X)\to H(Z,F^pY)$ is given by the left $\dg$-module structure of $H$. Here we write $\eta^p$ for the composition of morphisms
\begin{equation}\begin{tikzcd}[column sep=39,row sep=30]
H & HF \arrow[l, "\eta "'] & HF^2 \arrow[l, "\eta F"'] & \cdots \arrow[l, "\eta F^2"'] & HF^p. \arrow[l, "\eta F^{p-1}"'] \arrow[llll, "\eqcolon\,\eta^p", bend left=18]
\end{tikzcd}\end{equation}
 We choose $G$ as a cofibrant resolution of $G_0$. Then $\Phi G$ is quasi-isomorphic to $(H,\eta)$.
Thus, $\Phi$ is a quasi-equivalence of dg categories.
\end{proof}
Let $S$ be a set of morphisms in $H^0(\dg)$ stable under $H^0(F)$. Denote by $\dg[S^{-1}]$ the dg localization of $\dg$ with respect to $S$, cf. \Cref{sec:2.3}, so that we have an isomorphism
\begin{equation}
\rep(\dg[S^{-1}],\dgg)\simeq\rep(\dg,\dgg)_S
\end{equation}
in $\Hqe$ for each dg category $\dgg$. In particular, it follows that $F$ induces a dg bimodule $F'\in\rep(\dg[S^{-1}],\dg[S^{-1}])$ such that the square
\begin{equation}
\begin{tikzcd}[column sep=30,row sep=30]
\dg \arrow[r,"F"] \arrow[d,"\Ql"'] &\dg\arrow[d,"\Ql"] \\\dg[S^{-1}] \arrow[r,"F'"]  & \dg[S^{-1}]
\end{tikzcd}
\end{equation}
commutes in $\Hqe$.

\begin{proposition}\label{prop:3.8dgloc}
Let $\dgg$ be a dg category. We have a canonical isomorphism in $\Hqe$
\begin{equation}
\rep(\loll,\dgg)\simeq\Nlleq_S,
\end{equation}
where the right hand side denotes the full subcategory of $\Nlleq$ whose objects are the pairs $(G,\gamma)$ such that $G$ makes $S$ invertible.
\end{proposition}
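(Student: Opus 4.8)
The plan is to reduce to \Cref{equi-CllNll} applied to the localized data and then transport the resulting $\NN$-left-lax-equivariant category back along $\Ql$. First I would apply \Cref{equi-CllNll} with the pair $(\dg[S^{-1}],F')$ in place of $(\dg,F)$, which gives an isomorphism in $\Hqe$
\[
\rep(\loll,\dgg)\iso\mathbf{R}\NN\text{-}\llEq(\dg[S^{-1}],F',\dgg)=\Pairs(F',\id_{\dgg}).
\]
So the problem becomes to produce an isomorphism $\Pairs(F',\id_{\dgg})\simeq\Nlleq_S$ in $\Hqe$, where $\Nlleq=\Pairs(F,\id_{\dgg})$.

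The heart of the matter is the behaviour of the $\Pairs$-construction under restriction along $\Ql$. By the dg localization theorem (\Cref{sec:2.3}), restriction along $\Ql$ realizes a quasi-equivalence $\Ql^{*}\colon\rep(\dg[S^{-1}],\dgg)\iso\rep(\dg,\dgg)_S$, which after passage to cofibrant models may be chosen as an honest dg functor. Writing $X_{\Ql}\in\rep(\dg,\dg[S^{-1}])$ for the bimodule representing $\Ql$, the commutativity of the square relating $F$, $F'$ and $\Ql$ is the isomorphism $X_F\otimes_{\dg}X_{\Ql}\cong X_{\Ql}\otimes_{\dg[S^{-1}]}F'$ in $\rep(\dg,\dg[S^{-1}])$; combining it with the associativity of the tensor product of bimodules gives a natural isomorphism $\Ql^{*}(G'F')\cong(\Ql^{*}G')F$ for $G'\in\rep(\dg[S^{-1}],\dgg)$. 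Hence $\Ql^{*}$ intertwines the endofunctors $(-)F'$ and $(-)F$, and therefore lifts to a functor $\Pairs(F',\id_{\dgg})\to\Pairs(F,\id_{\dgg})$ sending $(G',\gamma')$ to $(\Ql^{*}G',\Ql^{*}\gamma')$, the structure morphism being read off through the natural isomorphism just described.

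It remains to check that this induced functor is a quasi-equivalence onto the full subcategory $\Nlleq_S$. Because the morphism complexes of $\Pairs(-,\id_{\dgg})$ are homotopy kernels of maps between morphism complexes of the various $\rep$-categories (\Cref{pairsNlleq}), and $\Ql^{*}$ carries each of those to a quasi-isomorphic complex, the induced functor is quasi-fully faithful. Its essential image is exactly $\Nlleq_S$: quasi-essential surjectivity of $\Ql^{*}$ shows every $(G,\gamma)\in\Nlleq$ with $G$ making $S$ invertible is reached, and such a pair genuinely lies in the image because $GF$ also makes $S$ invertible --- this is where the hypothesis that $S$ be stable under $H^0(F)$ enters --- so that $\gamma$ is already a morphism of $\rep(\dg,\dgg)_S$. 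Composing with the isomorphism of the first step yields the claim.

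The step I expect to be the main obstacle is this transport of the lax-equivariant structure: one must verify that the natural isomorphism $\Ql^{*}(G'F')\cong(\Ql^{*}G')F$ is coherent enough to define an honest (quasi-)dg functor between the $\Pairs$-categories, compatibly with the differentials and compositions spelled out in \Cref{pairsNlleq}, and one must be careful to realize $\Ql$ --- a priori only a morphism of $\Hqe$ --- by a genuine dg functor on suitable models, or else to run the whole argument through the $\rep$-correspondence and the universal properties without ever leaving $\Hqe$.
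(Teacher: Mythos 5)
Your proposal follows the same route as the paper's proof: both first apply \Cref{equi-CllNll} to the localized pair $(\dg[S^{-1}],F')$ to get $\rep(\loll,\dgg)\simeq\mathbf{R}\NN\text{-}\llEq(\dg[S^{-1}],F',\dgg)$, and then identify this category with $\Nlleq_S$ through the correspondence $\rep(\dg[S^{-1}],\dgg)\simeq\rep(\dg,\dgg)_S$. The paper states this second identification in one line, so your explicit transport of the lax structure along $\Ql^{*}$ (and the remark that stability of $S$ under $H^0(F)$ keeps $GF$ in $\rep(\dg,\dgg)_S$) only fills in details the paper leaves implicit; the argument is essentially identical and correct.
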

\begin{proof}
From \eqref{eq:equi-CllNll} in \Cref{equi-CllNll}, we have the isomorphism
\[\rep(\loll,\dgg)\simeq\mathbf{R}\NN\text{-}\llEq(\dg[S^{-1}],F',\dgg)\]
in $\Hqe$. On the other hand, the right hand side is exactly the dg category
\[\{(G,\gamma)\in\Nlleq\mid G~\mbox{makes}~S~\mbox{invertible}\},\]
which is $\Nlleq_S$.
\end{proof}

Let $\overline{S}$ be the image of $S$ under the projection functor $H^0(\dg)\to H^0(\CllFN)$.
\begin{corollary}\label{cor:lleqcomm}
There is a canonical isomorphism
\begin{equation}
(\CllFN)[\overline{S}^{-1}]\simeq\loll
\end{equation}
in $\Hqe$.
\end{corollary}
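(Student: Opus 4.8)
The plan is to deduce the isomorphism $(\CllFN)[\overline{S}^{-1}]\simeq\loll$ from the two universal properties already established, by comparing the functors $\rep(-,\dgg)$ represented by the two sides and invoking the Yoneda lemma in $\Hqe$. Concretely, for an arbitrary dg category $\dgg$ I would compute both $\rep((\CllFN)[\overline{S}^{-1}],\dgg)$ and $\rep(\loll,\dgg)$ and exhibit a natural isomorphism between them in $\Hqe$; since a morphism in $\Hqe$ is determined by the isomorphism class of the bimodule it represents (the bijection recalled in Section~\ref{sec:2.2}, following \cite{To}), a functorial isomorphism of the represented functors will yield the desired isomorphism of objects in $\Hqe$.

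The key steps, in order, are as follows. First, apply the universal property of dg localization to $\CllFN$ at the set $\overline{S}$: this gives
\[
\rep((\CllFN)[\overline{S}^{-1}],\dgg)\simeq\rep(\CllFN,\dgg)_{\overline{S}},
\]
the full subcategory of bimodules $G$ over $\CllFN$ whose restriction makes $\overline{S}$ invertible. Second, transport this through the quasi-equivalence $\Phi$ of \Cref{equi-CllNll}: since $\Phi(G)=(G\QN,Gq)$ and since a bimodule $G$ makes $\overline{S}$ invertible in $H^0(\CllFN)$ if and only if the restricted bimodule $G\QN$ makes $S$ invertible in $H^0(\dg)$ (because $\overline{S}$ is by definition the image of $S$ under $\QN$, and $\QN$ is the identity on objects), the equivalence $\Phi$ restricts to an equivalence
\[
\rep(\CllFN,\dgg)_{\overline{S}}\simeq\Nlleq_S.
\]
Third, invoke \Cref{prop:3.8dgloc}, which already identifies $\rep(\loll,\dgg)$ with $\Nlleq_S$. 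Composing these three identifications gives a natural isomorphism $\rep((\CllFN)[\overline{S}^{-1}],\dgg)\simeq\rep(\loll,\dgg)$ in $\Hqe$, natural in $\dgg$; the Yoneda-type argument then produces the asserted isomorphism in $\Hqe$, and one checks it is compatible with the canonical functors from $\dg$ (both sides receive a canonical map making $S$ invertible after passing through the lax quotient), so that it is indeed the canonical one.

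The main obstacle I expect is the second step: verifying cleanly that $\Phi$ identifies the subcategory $\rep(\CllFN,\dgg)_{\overline{S}}$ with $\Nlleq_S$, i.e. that "$G$ makes $\overline{S}$ invertible" and "$G\QN$ makes $S$ invertible" are literally the same condition. This requires being careful about what "makes $\overline{S}$ invertible" means for a bimodule $G$ over $\CllFN$ versus for the image functor $?\Lten_{\CllFN}G$, and about the fact that morphisms of $\overline{S}$ in $H^0(\CllFN)$ are the images $\QN(s)$ for $s\in S$. Since $\QN$ is the identity on objects and sends $f$ to $f\in\dg(X,Y)\subseteq\bigoplus_{p}\dg(X,F^pY)$, a morphism $\QN(s)$ is invertible in $H^0(\CllFN)$ after applying $G$ precisely when $s$ is invertible after applying $G\QN$; this is essentially formal but should be spelled out. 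A secondary, more bookkeeping-type point is confirming that the composite isomorphism respects the canonical structure morphisms, so that \Cref{cor:lleqcomm} can legitimately be read as a statement about the canonical arrows; this follows from the uniqueness clauses in the two universal properties but deserves a sentence.
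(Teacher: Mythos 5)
Your proposal is correct and follows essentially the same route as the paper: both chain the universal property of dg localization, the quasi-equivalence $\Phi$ of \Cref{equi-CllNll} identifying $\rep(\CllFN,\dgg)_{\overline{S}}$ with $\Nlleq_S$, and \Cref{prop:3.8dgloc}, then conclude via the Yoneda-type bijection in $\Hqe$. The extra care you take with the identification $\rep(\CllFN,\dgg)_{\overline{S}}=\rep(\CllFN,\dgg)_S$ is exactly the point the paper records as an equality without comment, so nothing is missing.
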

\begin{proof}
Let $\dgg$ be a dg category. We have canonical isomorphism in $\Hqe$
\begin{align}
\begin{split}
\rep(\loll,\dgg)&\simeq\Nlleq_S\\
&\simeq\rep(\CllFN,\dgg)_S\\
&=\rep(\CllFN,\dgg)_{\overline{S}}\\
&=\rep((\CllFN)[\overline{S}^{-1}],\dgg),
\end{split}
\end{align}
where the first equivalence is from \Cref{prop:3.8dgloc}, the second one is from \Cref{equi-CllNll} and the last one is by definition of dg localizations.
\end{proof}

\subsection{The dg orbit category}\label{sec:dg-orbi}

\begin{definition}\label{def:CFZ}
For a dg category $\dg$ with a bimodule $F\in\rep(\dg,\dg)$, the \emph{dg orbit category} $\CFZ$ is defined to be the dg localization $\Cloc$ of $\CllFN$ with respect to the morphisms $qX:\QN FX\to\QN X$ for any $X\in\obj(\dg)$.
\end{definition}

We denote the localization functor by $\Ql:\CllFN\to\CFZ,$ and define the quotient functor as the composition
\begin{equation}\label{eq:QZ}
    \QZ= \Ql\circ\QN: \dg\to\CFZ.
\end{equation}
For each object $X$ of $\dg$, we have a canonical quasi-isomorphism $$\Ql\circ qX:\QZ FX\to\QZ X.$$
\begin{remark}
Suppose that $F\in\rep(\dg,\dg)$ induces an equivalence $H^0F: H^0\dg\to H^0\dg$. Then for $X, Y\in\obj(\dg)$, the morphism space $\CFZ(X, Y)$ is quasi-isomorphic to the direct sum of the complexes $\dg(X, F^pY)$ for all $p\geq 0$ and $\dg(F^{-p}X, Y)$ for all $p<0$. Thus, we have an isomorphism
$$H^0(\CFZ)(X, Y)=\bigoplus_{p\in \ZZ}(H^0\dg)(X, (H^0F)^pY).$$
\end{remark}
\begin{definition}
We define the \emph{$\ZZ$-equivariant category} $\Zeq$ of $\ZZ$-equivariant functors from $(\dg,F)$ to $(\dgg,\id_{\dgg})$ as the full dg subcategory of $\Nlleq$ whose objects are the pairs $(G, \gamma)$, where $G$ is an $\dg$-$\dgg$-bimodule in $\rep(\dg,\dgg)$ and $\gamma: GF\to G$ is a morphism of dg bimodules such that $\gamma X$ is an isomorphism in $H^0(\dgg)$ for any $X\in\obj(\dg)$.
\end{definition}

\begin{theorem}\label{thm:Zeq}
Let $\dgg$ be a dg category. Then the dg functor $\QZ$ in \eqref{eq:QZ} induces
\begin{itemize}
\item an isomorphism in $\Hqe$:
\begin{equation}
\rep(\CFZ, \dgg) \iso \Zeq.
\end{equation}
\item a bijection from the set of morphisms $\CFZ\to\dgg$ in $\Hqe$ to the set of isomorphism classes in $H^0(\Zeq)$.
\end{itemize}
\end{theorem}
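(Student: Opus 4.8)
The strategy is to obtain Theorem~\ref{thm:Zeq} by combining the universal property of the left lax quotient (\Cref{equi-CllNll}) with the universal property of the dg localization, exactly as the construction of $\CFZ$ in \Cref{def:CFZ} suggests. First I would unwind the definitions: since $\CFZ=(\CllFN)[\overline{q}^{-1}]$, where $\overline{q}=\{qX:\QN FX\to\QN X\mid X\in\obj(\dg)\}$ is the chosen set of morphisms, the defining property of dg localizations gives a canonical isomorphism in $\Hqe$
\begin{equation*}
\rep(\CFZ,\dgg)\iso\rep(\CllFN,\dgg)_{\overline{q}},
\end{equation*}
the full dg subcategory of bimodules making every $qX$ invertible. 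Then I would apply the isomorphism $\Phi$ of \Cref{equi-CllNll}, $G\mapsto(G\QN,Gq)$, and check that, under $\Phi$, the subcategory $\rep(\CllFN,\dgg)_{\overline{q}}$ corresponds precisely to $\Zeq$. Concretely, $H^0(G)$ sends each $qX$ to an isomorphism in $H^0(\dgg)$ if and only if the morphism $\gamma X=(Gq)X:(G\QN F)X\to(G\QN)X$ is an isomorphism in $H^0(\dgg)$ for every $X\in\obj(\dg)$; since the latter is exactly the condition cutting out $\Zeq$ inside $\Nlleq$, the composite $\Phi\circ(\text{localization iso})$ restricts to the desired isomorphism $\rep(\CFZ,\dgg)\iso\Zeq$. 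This proves the first bullet.

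For the second bullet I would pass to isomorphism classes of $H^0$. By the last sentence of \Cref{sec:2.2} (the bijection of \cite{To}), for any dg categories $\dg$ and $\dgg$ the set of morphisms $\dg\to\dgg$ in $\Hqe$ is in bijection with the set of isomorphism classes of $H^0(\rep(\dg,\dgg))$. Applying this with $\dg$ replaced by $\CFZ$, and then transporting along the isomorphism $\rep(\CFZ,\dgg)\iso\Zeq$ of $\Hqe$ (which induces a bijection on isomorphism classes of $H^0$-objects), we get a bijection between morphisms $\CFZ\to\dgg$ in $\Hqe$ and isomorphism classes in $H^0(\Zeq)$. Finally I would record that this bijection is realized by composition with $\QZ$: a morphism $\CFZ\to\dgg$ corresponds to the bimodule it induces in $\rep(\CFZ,\dgg)$, and under $\Phi$ composed with restriction along $\QZ=\Ql\circ\QN$ this bimodule is sent to the pair $(G\QN,Gq)$, which is the image of the original morphism under $\QZ^*$; hence the bijection is indeed induced by $\QZ$.

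\textbf{Main obstacle.}
The only genuinely delicate point is matching the localization condition with the equivariance condition, i.e.\ verifying that $G$ makes all $qX$ invertible in $H^0$ \emph{if and only if} the structure morphism $\gamma=Gq$ of the pair $\Phi(G)$ is an objectwise isomorphism in $H^0(\dgg)$. One direction is immediate from functoriality of $H^0$; for the converse one must argue that the canonical quasi-isomorphism of \Cref{rmk:quasi} (the cone presentation $\Cone(\TCF\otimes_\dg F\otimes_\dg G\to\TCF\otimes_\dg G)\xrightarrow{\sim}G$) together with the identification $\CllFN\cong\TCF$ lets one recover the left $\CllFN$-action — and in particular the action of each $qX$ — from the data $(G\QN,Gq)$, so that invertibility of $\gamma X$ propagates to invertibility of the image of $qX$. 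This is essentially the content already established in the proof of \Cref{equi-CllNll} (the essential-surjectivity construction shows how the full $\CllFN$-module is reconstructed from $(H,\eta)$), so the argument here is a compatibility check with that construction rather than new work. A minor secondary point is to make sure the localization used to define $\CFZ$ inverts exactly the set $\overline{q}$ and that \Cref{cor:lleqcomm} is not needed here; everything follows from the universal property of $\dg[S^{-1}]$ applied to $\dg=\CllFN$, $S=\overline{q}$.
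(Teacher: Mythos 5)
Your proposal is correct and follows essentially the same route as the paper: combine the universal property of the dg localization defining $\CFZ$ with the isomorphism of \Cref{equi-CllNll}, identify the subcategory of bimodules inverting the $qX$ with $\Zeq$ (which is essentially definitional, since $\gamma X=(Gq)X$ is exactly the image of $qX$ under the functor associated to $G$), and deduce the second bullet from To\"en's bijection between morphisms in $\Hqe$ and isomorphism classes in $H^0(\rep(\CFZ,\dgg))$. The paper's proof is just a terser version of this argument.
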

\def\repq{\operatorname{rep}_{dg,q}}
\begin{proof}
By combining the universal property of dg localization and \Cref{equi-CllNll}, we obtain the following isomorphisms in $\Hqe$
\begin{equation}\label{equi-Zeq}
\begin{array}{rcccl}
\rep(\CFZ, \dgg) &\iso& \repq(\CllFN,\dgg) &\iso& \Zeq\\
     G &\mapsto& G\circ\Ql &\mapsto& (G\QZ, G\Ql q).
\end{array}
\end{equation}
Here, $\repq(\CllFN,\dgg)$ denotes the full subcategory of $\rep(\CllFN,\dgg)$ consisting of bimodules whose associated functors $H^0(\CllFN)\to H^0(\dgg)$ send $qX$ to an isomorphism in $H^0(\dgg)$ for any $X\in\obj(\dg)$. This shows (1). Part (2) is an immediate consequence because isomorphism classes in $\repp(\CFZ,\dgg)=H^0(\rep(\CFZ,\dgg))$ are in bijection with morphisms $\CFZ\to\dgg$.
\end{proof}

Similar to (the proof of) \Cref{prop:3.8dgloc} and \Cref{cor:lleqcomm},
we have the following proposition and corollary.

\begin{proposition}
In the setting of \Cref{prop:3.8dgloc}, we have a canonical isomorphism in $\Hqe$
\begin{equation}
\rep(\loZ,\dgg)\simeq\Zeq_S,
\end{equation}
where the right hand side denotes the full subcategory of $\Zeq$ whose objects are the pairs $(G,\gamma)$, where $G$ makes $S$ invertible.
\end{proposition}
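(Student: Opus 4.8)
The plan is to mimic the proof of \Cref{prop:3.8dgloc}, with the left lax quotient replaced by the dg orbit category and \Cref{equi-CllNll} replaced by \Cref{thm:Zeq}. First I would apply \Cref{thm:Zeq} to the dg category $\dg[S^{-1}]$ equipped with the induced bimodule $F'\in\rep(\dg[S^{-1}],\dg[S^{-1}])$. Since $\loZ=\dg[S^{-1}]/F'^{\ZZ}$ is, by \Cref{def:CFZ}, exactly the dg orbit category of the pair $(\dg[S^{-1}],F')$, \Cref{thm:Zeq} yields an isomorphism in $\Hqe$
\[
\rep(\loZ,\dgg)\iso\ZZeq(\dg[S^{-1}],F',\dgg).
\]

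The second, and main, step is to identify the right-hand side with $\Zeq_S$. An object of $\ZZeq(\dg[S^{-1}],F',\dgg)$ is a pair $(G,\gamma)$ with $G\in\rep(\dg[S^{-1}],\dgg)$ and $\gamma\colon GF'\to G$ such that $\gamma X$ is invertible in $H^0(\dgg)$ for every object $X$. The universal property of the dg localization gives an isomorphism $\rep(\dg[S^{-1}],\dgg)\iso\rep(\dg,\dgg)_S$ in $\Hqe$, induced by restriction along $\Ql\colon\dg\to\dg[S^{-1}]$, and under this isomorphism the bimodule action of $F'$ goes over to that of $F$, by the commutative square relating $\Ql$, $F$ and $F'$ recorded just after \Cref{equi-CllNll}. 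Hence the pair $(G,\gamma)$ corresponds to a pair in which $G$ lies in $\rep(\dg,\dgg)$ and makes $S$ invertible, $\gamma\colon GF\to G$, and $\gamma X$ is invertible in $H^0(\dgg)$ for each $X\in\obj(\dg)$ (the two conditions coinciding since $\dg$ and $\dg[S^{-1}]$ have the same objects) --- that is, to an object of $\Zeq$ whose underlying bimodule makes $S$ invertible, i.e. an object of $\Zeq_S$. Since restriction along $\Ql$ is moreover quasi-fully faithful, this gives the asserted isomorphism.

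The only point that needs care --- the ``main obstacle'', such as it is --- is the compatibility of the $F'$-twisting with the $F$-twisting under restriction: one must check that an $F'$-equivariant structure on $G$ amounts to the same datum as an $F$-equivariant structure on its restriction, which is formal once one knows $\Ql\circ F\simeq F'\circ\Ql$ in $\Hqe$. Alternatively, one can avoid handling $\gamma$ altogether and argue as in the proof of \Cref{cor:lleqcomm}: chaining the displayed isomorphism with \Cref{thm:Zeq} for $(\dg,F)$ identifies $\rep(\loZ,\dgg)$ with $\rep(\CFZ,\dgg)_S=\rep(\CFZ,\dgg)_{\overline{S}}=\rep((\CFZ)[\overline{S}^{-1}],\dgg)$, whence also $\loZ\simeq(\CFZ)[\overline{S}^{-1}]$, and the full subcategory $\rep(\CFZ,\dgg)_S$ matches $\Zeq_S$ under the equivalence $\rep(\CFZ,\dgg)\iso\Zeq$ of \Cref{thm:Zeq}.
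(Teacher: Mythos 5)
Your proposal is correct and follows exactly the route the paper intends: the paper gives no separate proof but states that the result is obtained ``similar to (the proof of) \Cref{prop:3.8dgloc} and \Cref{cor:lleqcomm}'', i.e. by applying \Cref{thm:Zeq} to $(\dg[S^{-1}],F')$ and identifying $\ZZ\text{-}\operatorname{Eq}(\dg[S^{-1}],F',\dgg)$ with $\Zeq_S$ via $\rep(\dg[S^{-1}],\dgg)\simeq\rep(\dg,\dgg)_S$ and the square $\Ql F\simeq F'\Ql$, which is precisely what you do (including the careful point about transporting the equivariant structure).
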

\begin{corollary}
We have a canonical isomorphism in $\Hqe$
\begin{equation}
    (\CFZ)[\overline{S}^{-1}]\simeq\loZ.
\end{equation}
\end{corollary}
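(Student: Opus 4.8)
The plan is to mimic the proof of \Cref{cor:lleqcomm} verbatim, with the left lax quotient $\CllFN$ replaced by the dg orbit category $\CFZ$. Since an object of $\Hqe$ is determined up to isomorphism by the $\Hqe$-valued functor $\dgg\mapsto\rep(-,\dgg)$ it corepresents, it is enough to exhibit, for every dg category $\dgg$, a chain of isomorphisms in $\Hqe$ linking $\rep(\loZ,\dgg)$ with $\rep\bigl((\CFZ)[\overline{S}^{-1}],\dgg\bigr)$, natural in $\dgg$.

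First I would invoke the proposition immediately preceding this corollary, which identifies $\rep(\loZ,\dgg)$ with $\Zeq_S$, the full dg subcategory of $\Zeq$ on the pairs $(G,\gamma)$ for which $G$ makes $S$ invertible. Next, \Cref{thm:Zeq} gives an isomorphism $\rep(\CFZ,\dgg)\iso\Zeq$ in $\Hqe$, sending a bimodule $G'$ to $(G'\QZ,\,G'\Ql q)$. Since $H^0(\QZ)$ carries $S$ to $\overline{S}$ (the image of $S$ in $H^0(\CFZ)$) and since a bimodule over $\CFZ$ inverts a morphism of $\overline{S}$ exactly when its restriction along $\QZ$ inverts the corresponding morphism of $S$, this isomorphism restricts to $\rep(\CFZ,\dgg)_{\overline{S}}\iso\Zeq_S$. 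Finally, the universal property of the dg localization gives $\rep(\CFZ,\dgg)_{\overline{S}}=\rep\bigl((\CFZ)[\overline{S}^{-1}],\dgg\bigr)$, and all of these identifications are natural in $\dgg$; composing them and applying the Yoneda lemma in $\Hqe$ yields the asserted isomorphism.

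I expect the only delicate point---hence the main obstacle---to be the verification that the two descriptions of $\Zeq_S$ agree: on one hand it is the $\ZZ$-equivariant category $\ZZ\text{-}\operatorname{Eq}(\dg[S^{-1}],F',\dgg)$ attached to the localized pair $(\dg[S^{-1}],F')$, and on the other it is cut out inside $\Zeq$ by the condition that $G$ invert $S$. This is essentially already packaged into the preceding proposition (whose proof combines \Cref{equi-CllNll} with the universal property of $\dg[S^{-1}]$), so the remaining work is bookkeeping: tracking how the localization functor $\Ql\colon\CllFN\to\CFZ$, the canonical morphism $q$, and the induced bimodule $F'$ interact, and confirming that every isomorphism in the chain is natural in $\dgg$ so that the corepresentability argument in $\Hqe$ is legitimate. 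No new idea beyond those already used in \Cref{prop:3.8dgloc} and \Cref{cor:lleqcomm} should be required.
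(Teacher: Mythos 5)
Your proposal is correct and follows exactly the route the paper intends: the paper itself proves this corollary only by the remark ``similar to \Cref{prop:3.8dgloc} and \Cref{cor:lleqcomm}'', i.e.\ by the chain $\rep(\loZ,\dgg)\simeq\Zeq_S\simeq\rep(\CFZ,\dgg)_{\overline{S}}=\rep\bigl((\CFZ)[\overline{S}^{-1}],\dgg\bigr)$ and corepresentability in $\Hqe$, which is precisely your argument. Your identification of the $S$ versus $\overline{S}$ bookkeeping as the only delicate point matches the corresponding step in the proof of \Cref{cor:lleqcomm}.
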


Next, we assume that $\dg$ is a pretriangulated dg category. In general, even if $\dg$ is pretriangulated, the dg category $\CFZ$ is not necessarily pretriangulated. To get a pretriangulated quotient, we have to take the pretriangulated hull. For any pretriangulated dg category $\dgg$, we have an isomorphism
\begin{equation}
\rep(\pretr(\CFZ),\dgg) \iso\rep(\CFZ,\dgg)
\end{equation}
in $\Hqe$. By combining this with \Cref{thm:Zeq}, we obtain our main theorem below.

\begin{theorem}\label{mainthm}
Let $\dgg$ be a pretriangulated dg category. Then the dg functor \[\dg\to\pretr(\CFZ)\] induces
\begin{itemize}
\item an isomorphism in $\Hqe$
\begin{equation}
\rep(\pretr(\CFZ), \dgg) \iso \Zeq,
\end{equation}
and
\item a bijection from the set of morphisms $\pretr(\CFZ)\to\dgg$ in $\Hqe$ to the set of isomorphism classes in $H^0(\Zeq)$. In particular, for each pair $(G,\gamma)$ in $\Zeq$, there is a unique morphism $\overline{G}$ in $\Hqe$ making the triangle
\begin{equation}
\begin{tikzcd}[column sep=39,row sep=30]
    \dg \arrow[rd, "\forall G\in\Zeq"] \arrow[d] & \\
    \pretr(\CFZ) \arrow[r, dashed, "\overline{G}"']  & \dgg
\end{tikzcd}\end{equation}
commutative.
\end{itemize}
\end{theorem}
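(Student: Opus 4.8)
\textbf{Proof plan for Theorem~\ref{mainthm}.}
The plan is to obtain the statement by chaining together three isomorphisms in $\Hqe$, each of which is already available from the preceding material. First I would invoke the universal property of the pretriangulated hull, recalled in the Theorem/Definition of \Cref{sec:2.3}: for any pretriangulated dg category $\dgg$, the canonical functor $\CFZ\to\pretr(\CFZ)$ induces an isomorphism
\begin{equation}
\rep(\pretr(\CFZ),\dgg)\iso\rep(\CFZ,\dgg)
\end{equation}
in $\Hqe$. (The functor $\pretr$ is left adjoint to the inclusion of pretriangulated dg categories into $\Hqe$, so this is exactly the adjunction isomorphism; note one must check that $\rep(-,\dgg)$ represents the relevant $\Hom$-set in $\Hqe$, which follows from \cite{To} as recalled at the end of \Cref{sec:2.2}.) Second, I would compose this with the isomorphism $\rep(\CFZ,\dgg)\iso\Zeq$ from \Cref{thm:Zeq}(1). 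Concatenating the two gives the first bullet point: an isomorphism $\rep(\pretr(\CFZ),\dgg)\iso\Zeq$ in $\Hqe$, induced by $\QZ$ followed by $\CFZ\to\pretr(\CFZ)$, i.e. by the functor $\dg\to\pretr(\CFZ)$.

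For the second bullet, I would pass to $H^0$ and use the dictionary between morphisms in $\Hqe$ and isomorphism classes of right quasi-representable bimodules. Concretely, applying $H^0$ to the isomorphism of dg categories $\rep(\pretr(\CFZ),\dgg)\iso\Zeq$ yields a bijection on isomorphism classes of objects:
\begin{equation}
\text{iso.\ classes in }\repp(\pretr(\CFZ),\dgg)\;\longleftrightarrow\;\text{iso.\ classes in }H^0(\Zeq).
\end{equation}
By the result of \cite{To} cited in \Cref{sec:2.2}, the left-hand set is in bijection with the set of morphisms $\pretr(\CFZ)\to\dgg$ in $\Hqe$. Combining, every pair $(G,\gamma)\in\Zeq$ determines a unique morphism $\overline{G}\colon\pretr(\CFZ)\to\dgg$ in $\Hqe$, and tracing through the construction of the isomorphisms one checks that $\overline{G}$ restricts along $\dg\to\pretr(\CFZ)$ to the morphism $\dg\to\dgg$ classified by $(G,\gamma)$, so that the triangle in the statement commutes; uniqueness of $\overline{G}$ with this property is again the bijectivity just established.

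The only genuine point requiring care—hence the step I expect to be the main obstacle—is the bookkeeping that makes the composite isomorphism \emph{canonical} and \emph{induced by the stated functor}. One must verify that under the isomorphism of \Cref{thm:Zeq}, a bimodule $G\in\rep(\pretr(\CFZ),\dgg)$ restricted via $\CFZ\to\pretr(\CFZ)$ and then via $\QZ$ corresponds to the pair $(G\QZ',G\Ql q)$, where $\QZ'$ now denotes the composite $\dg\to\pretr(\CFZ)$; this is a straightforward but slightly delicate compatibility of the Yoneda-type identifications $\dgFun(-,-)\simeq\rep(-,-)$ with composition of functors, together with the fact that $\CFZ\to\pretr(\CFZ)$ is quasi-fully faithful so that no information about $\gamma$ is lost. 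Everything else is a formal concatenation of universal properties already proved, so no new computation is needed.
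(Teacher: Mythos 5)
Your proposal is correct and follows essentially the same route as the paper: the authors also obtain the theorem by combining the adjunction isomorphism $\rep(\pretr(\CFZ),\dgg)\iso\rep(\CFZ,\dgg)$ from the universal property of the pretriangulated hull with the isomorphism $\rep(\CFZ,\dgg)\iso\Zeq$ of \Cref{thm:Zeq}, and then passing to isomorphism classes in $H^0$ via To\"en's bijection for the second bullet. No substantive difference.
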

\begin{corollary}\label{cor:com}
Suppose $\N\subseteq\dg$ is a full dg subcategory such that $H^0(\N)$ is stable under $H^0(F)$, so that $F$ induces dg bimodules $F_{\N}\in\rep(\N,\N)$ and $\qF\in\rep(\dg/\N,\dg/\N)$. We have a canonical isomorphism in $\Hqe$
\begin{equation}\label{cor:commutes}
\pretr((\dg/\N)/\qF^{\ZZ})\simeq\pretr(\CFZ)/\pretr(\NFZ).
\end{equation}
\end{corollary}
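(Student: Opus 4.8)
The plan is to verify the isomorphism by testing both sides against an arbitrary pretriangulated dg category $\dgg$ via the functors $\rep(-,\dgg)$, using the universal properties established earlier in the paper. So first I would fix a pretriangulated dg category $\dgg$ and compute $\rep(\pretr((\dg/\N)/\qF^{\ZZ}),\dgg)$. By the pretriangulated hull adjunction and \Cref{thm:Zeq} applied to the pair $(\dg/\N,\qF)$, this is isomorphic in $\Hqe$ to $C_d\text{-}\operatorname{Eq}$—more precisely to $\ZZ\text{-}\operatorname{Eq}(\dg/\N,\qF,\dgg)$, the full dg subcategory of $\mathbf{R}\NN\text{-}\llEq(\dg/\N,\qF,\dgg)$ of pairs $(G,\gamma)$ with $\gamma$ an $H^0$-isomorphism. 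The key observation is then that, since $\pretr$ preserves dg quotients and the dg quotient $\dg\to\dg/\N$ induces a fully faithful restriction $\rep(\dg/\N,\dgg)\hookrightarrow\rep(\dg,\dgg)$ with essential image the bimodules annihilating $\N$, one gets $\ZZ\text{-}\operatorname{Eq}(\dg/\N,\qF,\dgg)\simeq\{(G,\gamma)\in\Zeq : G \text{ annihilates }\N\}$; here one uses that $\qF$ is the bimodule induced by $F$, so the compatibility datum $\gamma$ matches up on both sides.

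Next I would compute the right-hand side. Again using $\rep(\pretr(-),\dgg)\simeq\rep(-,\dgg)$ and the universal property of dg quotients (the thmdef for $\dg/\N$), $\rep(\pretr(\CFZ)/\pretr(\NFZ),\dgg)$ is the full dg subcategory of $\rep(\pretr(\CFZ),\dgg)\simeq\Zeq$ consisting of those $(G,\gamma)$ whose restriction along the canonical functor $\pretr(\NFZ)\to\pretr(\CFZ)$ is zero in $H^0$. Unwinding through $\dg\to\CFZ\to\pretr(\CFZ)$ and $\N\to\NFZ\to\pretr(\NFZ)$, and using that the square relating these (i.e. that $\N\to\dg$ induces $\NFZ\to\CFZ$ compatibly with the orbit construction, which follows from functoriality of the orbit category and \Cref{cor:lleqcomm}) commutes in $\Hqe$, one identifies this with $\{(G,\gamma)\in\Zeq : G\QZ\text{ annihilates }\N\}$, i.e. exactly the same subcategory as above. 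Since both $\rep(-,\dgg)$ values agree naturally in $\dgg$, Yoneda in $\Hqe$ (via the bijection between $\rep$ and $\Hqe$-morphisms recalled in \Cref{sec:2.2}) gives the asserted isomorphism in $\Hqe$.

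The main obstacle I expect is bookkeeping the compatibility datum $\gamma$ through the quotient: one must check that the bimodule $\qF$ on $\dg/\N$ really is the one for which a $\qF$-equivariant structure on a $\dg/\N$-bimodule $G$ is the \emph{same} as an $F$-equivariant structure on its restriction to $\dg$ that happens to annihilate $\N$. This is where the hypothesis that $H^0(\N)$ is stable under $H^0(F)$ is essential—it guarantees $F$ descends, so that the square with $F$, $\qF$ and the two quotient functors commutes in $\Hqe$, and hence $\gamma$ transports correctly. A secondary point, which is routine but must be stated, is that all the identifications are of \emph{dg} categories up to quasi-equivalence, so one should either work at the level of $\rep(-,\dgg)$ throughout (where everything is an honest isomorphism in $\Hqe$) or invoke that $\pretr$ and the quotient constructions are well-defined on $\Hqe$; I would do the former, so that no strictification issues arise and the proof reduces to the two parallel computations above plus the observation that they land in the same subcategory of $\Zeq$.
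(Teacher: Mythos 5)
Your proposal is correct and follows essentially the same route as the paper: test both sides against an arbitrary pretriangulated $\dgg$ via $\rep(-,\dgg)$, identify each with the full subcategory $\Zeq_{\N}$ of pairs $(G,\gamma)$ with $G$ annihilating $\N$ by means of \Cref{mainthm} together with the universal property of dg quotients, and conclude from the bijection between $\rep$ and morphisms in $\Hqe$. (The stray mention of ``$C_d$-Eq'' is only a slip of notation; the relevant category is $\ZZeq(\dg/\N,\qF,\dgg)$, as you then correctly state.)
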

\begin{proof}
For a pretriangulated dg category $\dgg$, we have canonical isomorphisms in $\Hqe$
\begin{align}
\begin{split}
\rep\big(\pretr((\dg/\N)/\qF^{\ZZ}),\dgg\big)&\simeq\ZZeq(\dg/\N,\qF,\dgg)\\
&\simeq\Zeq_{\N}\\
&\simeq\rep(\pretr(\CFZ),\dgg)_{\N}\\
&=\rep(\pretr(\CFZ),\dgg)_{\pretr(\NFZ)}\\
&\simeq\rep\big(\pretr(\CFZ)/\pretr(\NFZ),\dgg\big),
\end{split}
\end{align}
where $\Zeq_{\N}$ denotes the full subcategory of $\Zeq$ whose objects are the pairs $(G,\gamma)$ with $G$ annihilating $\N$.
Here, the first and third equivalences are from \Cref{mainthm}, the fourth equality is because that the image of $\N$ under the quotient $\dg\to\pretr(\CFZ)$ is $\pretr(\NFZ)$ and the second and the fifth equivalences are by the construction of dg quotients.
\end{proof}

In particular, if the pretriangulated dg category $\dg$ is a dg enhancement of some triangulated category $\hh{T}$, we can naturally define a triangulated orbit category associated to $\hh{T}$ as follows.
\begin{definition}\label{def:triorbit}
Let $\hh{T}$ be a triangulated category endowed with a dg enhancement $H^0(\hua{A})\iso \hua{T}$ and $F\in\rep(\dg,\dg)$ be a dg bimodule. If the induced functor $H^0(F): H^0(\hh{A})\to H^0(\hh{A})$ is an equivalence, then the \emph{triangulated orbit category} of $\hh{T}$ with respect to $\dg$ is defined as the category $H^0(\pretr(\hh{A}/F^{\ZZ}))$.
\end{definition}

\subsection{Application: higher cluster categories without heredity assumption} \label{ss:higher-cluster}
Let $A$ be a smooth, proper and connective dg algebra.
Our goal is to define its $m$-cluster category in a canonical way from \Cref{def:triorbit}, where $m\geq 2$ is an integer. 
The perfect derived category $\per A$ admits a Serre functor given by the left derived functor
\[\S= ?\Lten_ADA,\]
where $DA=\Hom_{\k}(A,\k)$ is the $\k$-dual of the bimodule $A$. We write the auto-equivalence
\[
    \Sigma_m=\S\circ[-m]
\]
of $\per A$. The triangulated category $\per A$ has a canonical dg enhancement given by the dg category $\hh{A}$ of perfect cofibrant dg $A$-modules. For any two objects $P,P'$ in $\dg$, the morphism space $\Hom_A^n(P, P')$ of degree $n$ consists of the homogeneous $A$-linear morphisms $f:P\to P'$ of graded objects of degree $n$. The differential is given by $$d(f)=d_{P'}\circ f-(-1)^nf\circ d_P.$$ Let $X\to DA$ be a cofibrant resolution of $DA$ in the category of dg $A$-bimodules. We take $\sigm= -\otimes_A X[-m]$ to be the tensor product
\begin{align}
\begin{split}
\sigm\colon\dg&\to\dg\\P&\mapsto P\otimes_A X[-m].
\end{split}
\end{align}
 We notice that $\sigm$ itself is not a dg equivalence but the induced triangle functor
\begin{equation}\begin{tikzcd}
H^0(\sigm):\arrow[d, "\simeq"',shift right=5]  H^0(\hh{A}) \arrow[r] \arrow[d, "\simeq",shift left=5]  & H^0(\hh{A}) \arrow[d, "\simeq"]\\ \Sigma_m:~~~ \per A \arrow[r,"\simeq"]&\per A
\end{tikzcd}
\end{equation}
is an equivalence. In general, the orbit category $\per A/\Sigma_m$ in the sense of \cite{K1} is not triangulated. However, we can use \Cref{def:triorbit} to obtain a triangulated category.
\begin{definition}\label{def:orbitmclu}
With the assumption above, the \emph{orbit $m$-cluster category} associated to $A$ is defined as the triangulated orbit category $H^0(\pretr(\AsZ))$.
\end{definition}
As shown in \cite{K1}, when $A$ is hereditary, the dg orbit category $\AsZ$ is already pretriangulated. So we have
\begin{equation}
H^0(\pretr(\AsZ))=H^0(\AsZ)\simeq H^0(\hh{A})/H^0(\sigm)\simeq\per A/\Sigma_m.
\end{equation}

\subsection{Commutativity between orbit quotients and dg quotients}
We continue to discuss the functoriality of the orbit category.
Let $\dg, \dg'$ be two dg categories with dg bimodules $F\in\rep(\dg,\dg)$ and $F'\in\rep(\dg',\dg')$. Let $G\in\rep(\dg,\dg')$ be a dg bimodule with an isomorphism $\gamma: GF\to F'G $ of $\rep(\dg,\dg')$. Let $\dgg$ be another dg category and $(H,\eta)$ be an object in $\ZZ$-Eq$(\dg',F',\dgg)$. Then the composition
\begin{equation}\begin{tikzcd}
HGF \arrow[r,"H\gamma"] & HF'G \arrow[r, "\eta G"]&HG
\end{tikzcd}\end{equation}
yields an object $(HG,\eta G\circ H\gamma)$ in $\Zeq$, which implies that $G$ induces a functor from $\ZZ$-$\text{Eq}(\dg',F',\dgg)$ to $\Zeq$. Thus we have an induced functor
\begin{equation}\overline{G}:\CFZ\to\dg'/F'^{\ZZ}\end{equation}
by \eqref{equi-Zeq}. Therefore, we deduce a functor between pretriangulated hulls and the corresponding triangulated categories:
\begin{equation}\pretr(\CFZ)\to\pretr(\dg'/F'^{\ZZ})\end{equation}
and
\begin{equation}H^0(\pretr(\CFZ))\to H^0(\pretr(\dg'/F'^{\ZZ})).\end{equation}
We make the following assumptions.
\begin{assumption}\label{assumption}
Let $\hh{T}$ be a triangulated category and $\hh{S}\subseteq\hh{T}$ a triangulated subcategory. We denote the corresponding Verdier quotient by $\hh{T}/\hh{S}$. Suppose we have a dg enhancement $\dg$ of $\hh{T}$ given by an equivalence $\Phi: H^0(\dg)\iso\hh{T}$. Let $F\in\rep(\dg,\dg)$ be a dg bimodule such that $\Phi\circ H^0(F)\circ\Phi^{-1}$ is an equivalence $\hh{T}\to\hh{T}$ and induces an equivalence $\hh{S}\to\hh{S}$. Let $\N\subseteq\dg$ be the full dg subcategory whose objects are those $X\in\obj(\dg)$ with $\Phi(X)$ in $\hh{S}$.
\end{assumption}
With the assumptions above, we have that $\N$ is a dg enhancement of $\hh{S}$ and, by the main result of \cite{D}, the dg quotient $\dg/\N$ becomes a dg enhancement for the Verdier quotient $\hh{T}/\hh{S}$. By the universal property of the dg quotient, the dg module $F$ then induces a bimodule $\qF\in\rep(\dg/\N,\dg/\N)$ such that $H^0(\qF)$ corresponds to the auto-equivalence of $\hh{T}/\hh{S}$ induced by $\Phi\circ H^0(F)\circ\Phi^{-1}$. We have the diagram
\[\begin{tikzcd}
\N \arrow[r, "\i"] \arrow["F_{\N}"', loop, distance=2em, in=125, out=55] & \dg \arrow[r, "\p"] \arrow["F"', loop, distance=2em, in=125, out=55] & \dg/\N, \arrow["\qF"', loop, distance=2em, in=125, out=55]
\end{tikzcd}\]
where $\i$ is inclusion and $\p$ the dg quotient functor. By the functoriality of the dg orbit categories, we obtain the induced sequence
\[\begin{tikzcd}
\N/F_{\N}^{\ZZ} \arrow[r, "\overline{\i}"] & \CFZ \arrow[r, "\overline{\p}"] & (\dg/\N)/\qF^{\ZZ}.
\end{tikzcd}\]
\begin{proposition}\label{cor:comm}
Under \Cref{assumption}, we have a short exact sequence of triangulated categories\begin{equation}\label{eq:exact}
0\to(\N/F_{\N}^{\ZZ})^{tr}\to(\CFZ)^{tr}\to((\dg/\N)/\qF^{\ZZ})^{tr}\to 0,
\end{equation}
where the notation $(-)^{tr}$ denotes the functor $H^0\circ\pretr$.
\end{proposition}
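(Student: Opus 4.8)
The plan is to upgrade the Hqe-level isomorphism \eqref{cor:commutes} of \Cref{cor:com} to an honest short exact sequence of triangulated categories after applying $(-)^{tr}=H^0\circ\pretr$. First I would record that, by \Cref{cor:com} applied to the pair $\N\subseteq\dg$ (whose hypotheses hold since $H^0(\N)$ is stable under $H^0(F)$ by \Cref{assumption}), there is a canonical isomorphism in $\Hqe$
\begin{equation*}
\pretr\big((\dg/\N)/\qF^{\ZZ}\big)\simeq\pretr(\CFZ)/\pretr(\NFZ).
\end{equation*}
Passing to $H^0$ gives a triangle equivalence $((\dg/\N)/\qF^{\ZZ})^{tr}\simeq H^0\big(\pretr(\CFZ)/\pretr(\NFZ)\big)$, so it suffices to identify the latter with the Verdier quotient of $(\CFZ)^{tr}$ by the image of $(\NFZ)^{tr}$ and to check that this image is, up to direct summands, all of a triangulated subcategory through which the sequence becomes exact.

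The key steps, in order, are as follows. (1) By the main result of Drinfeld \cite{D}, for any dg category $\dgg$ and full dg subcategory $\dgg'$, the dg quotient functor $\dgg\to\dgg/\dgg'$ induces on $H^0$ of pretriangulated hulls a Verdier localization $H^0(\pretr(\dgg))\to H^0(\pretr(\dgg/\dgg'))$ whose kernel is the thick subcategory generated by the image of $H^0(\pretr(\dgg'))$; this is exactly the statement that $\pretr(\dgg)/\pretr(\dgg')$ is a dg enhancement of the Verdier quotient. Apply this with $\dgg=\CFZ$ and $\dgg'=\NFZ$ to get the exact sequence $0\to\mathcal K\to(\CFZ)^{tr}\to\big(\pretr(\CFZ)/\pretr(\NFZ)\big)^{tr}\to 0$, where $\mathcal K=\thick\big(\Ima\overline{\i}^{tr}\big)$. (2) Show $\mathcal K=\Ima\big((\NFZ)^{tr}\to(\CFZ)^{tr}\big)$ is already thick, equivalently that $(\NFZ)^{tr}$ is triangle equivalent onto a thick subcategory of $(\CFZ)^{tr}$. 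For this I would argue that $\overline{\i}$ is, up to quasi-equivalence, the inclusion of a full pretriangulated-closed-under-summands dg subcategory: on objects $\NFZ$ and $\CFZ$ have the same objects as $\N$ and $\dg$ respectively, the localization at the maps $qX$ is performed uniformly, and the $\Hom$-complexes of $\NFZ$ compute $\bigoplus_{p\in\ZZ}\N(X,F_\N^pY)$ inside $\bigoplus_{p\in\ZZ}\dg(X,F^pY)$ (using the description in the Remark after \Cref{def:CFZ} and that $H^0(F)$, hence $H^0(F_\N)$, is an equivalence). Since $\N$ is closed under direct summands in $\dg$ (it is the preimage of a triangulated, hence thick, subcategory $\hh S\subseteq\hh T$ under the equivalence $\Phi$), the same holds after passing to orbit categories and pretriangulated hulls, so $\mathcal K$ is thick and equals the image. (3) Combine (1) and (2) with \Cref{cor:com} to rewrite the right-hand term as $((\dg/\N)/\qF^{\ZZ})^{tr}$ and conclude \eqref{eq:exact}.

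I expect the main obstacle to be step (2): carefully justifying that the induced functor $\overline{\i}^{tr}\colon(\NFZ)^{tr}\to(\CFZ)^{tr}$ is fully faithful with thick (summand-closed) essential image, rather than merely having thick closure equal to $\mathcal K$. The subtlety is that the orbit construction $(-)^{\ZZ}$ followed by $\pretr$ can a priori create new summands, so one must use that $\N\hookrightarrow\dg$ is a Verdier-style inclusion with $\hh S$ thick in $\hh T$ and check that full faithfulness of $\overline{\i}$ on $\Hom$-complexes survives localization at the $qX$ and passage to $\pretr$. Once $\overline{\i}^{tr}$ is known to embed $(\NFZ)^{tr}$ as a thick subcategory, Drinfeld's localization theorem \cite{D} and \Cref{cor:com} formally yield the exact sequence; the remaining verifications (that the composite $\overline{\p}^{tr}\circ\overline{\i}^{tr}$ vanishes and that $\overline{\p}^{tr}$ is a quotient functor with the expected kernel) are then routine consequences of the universal properties already established in \Cref{thm:Zeq}, \Cref{mainthm} and \Cref{cor:com}.
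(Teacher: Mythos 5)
Your core argument coincides with the paper's proof, which is exactly your steps (1) and (3): combine \Cref{cor:com} with Drinfeld's theorem on dg quotients (over a field the hypotheses of \cite[Theorem 3.4]{D} are automatic) to identify $((\dg/\N)/\qF^{\ZZ})^{tr}$ with the Verdier quotient $(\CFZ)^{tr}/(\NFZ)^{tr}$, and read off \eqref{eq:exact}. Your extra step (2), however, is both unnecessary and incorrectly justified. It is unnecessary because the exactness asserted in \eqref{eq:exact} is precisely the Verdier-quotient statement delivered by the displayed equivalences (exactness in the usual ``up to direct summands'' sense); neither the statement nor the paper's proof claims that the image of $(\NFZ)^{tr}$ is thick in $(\CFZ)^{tr}$. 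It is incorrectly justified because \Cref{assumption} only asks $\hh{S}$ to be a \emph{triangulated} subcategory of $\hh{T}$; your phrase ``triangulated, hence thick'' is false, so $\N$ need not be closed under direct summands in $\dg$. Moreover, even when $\hh{S}$ is thick, full faithfulness of $\overline{\i}^{tr}$ would not by itself yield thickness of its essential image, since the passage to orbit categories and pretriangulated hulls can create new direct summands; all that Drinfeld's theorem needs (and all the paper uses) is the thick closure $\mathcal{K}$ of the image, by which the Verdier quotient is unchanged. A last cosmetic point: $\NFZ$ is a priori only an object of $\Hqe$ equipped with the morphism $\overline{\i}$ rather than a full dg subcategory of $\CFZ$, so in step (1) one should quotient by (the full dg subcategory on) its image, exactly as the paper does implicitly when writing $\big((\CFZ)/(\NFZ)\big)^{tr}$. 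With step (2) dropped, your proof is the paper's proof.
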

\begin{proof}
Since we work over a field, the conditions in \cite[Theorem 3.4]{D} hold naturally. We deduce from \Cref{cor:com} that there are equivalences
\[\bigg((\dg/\N)/\qF^{\ZZ}\bigg)^{tr}\simeq\bigg((\CFZ)/(\N/F^{\ZZ})\bigg)^{tr}\simeq\bigg(\CFZ\bigg)^{tr}/\bigg(\N/F^{\ZZ}\bigg)^{tr},\]
 which implies \eqref{eq:exact}.
\end{proof}
\subsection{The bigraded case}
Let $\dg$ be a dbg category and $F$ a dbg $\dg$-$\dg$-bimodule. Similar to before, we have the bigraded left lax quotient $\CllFNb$ of $\dg$ by $F$, which is a dbg category where
\begin{itemize}
\item the objects are the same as those of $\dg$, and
\item the morphisms are given by $\CllFNb(X, Y) = \bigoplus_{p \in \NN} \dg(X, F^pY)$.
\end{itemize}
By construction, $\CllFNb$ is equivalent to the tensor category $T_{\dg}(F)$. Moreover, we define the dbg orbit category $\CFZb$ as the dg localization of $\CllFNb$ with respect to the morphisms $qX: \QN FX \to \QN X$ for every $X \in \obj(\dg)$, where
\[\QN: \dg\to \CllFNb\]
is the quotient functor. The dbg orbit category $\CFZb$ also satisfies the universal property described in \Cref{thm:Zeq}. Furthermore, we can take its pretriangulated hull, thereby obtaining a triangulated structure. We have that all properties extend naturally to the bigraded setting. In particular, replacing the original condition in \Cref{assumption} with $F$ being a dbg $\dg$-$\dg$-bimodule, we obtain a short exact sequence of triangulated categories
categories\begin{equation}\label{eq:exactbi}
0\to(\N/^{dbg}F_{\N}^{\ZZ})^{tr}\to(\CFZb)^{tr}\to((\dg/\N)/^{dbg}\qF^{\ZZ})^{tr}\to 0.
\end{equation}
\section{Cluster categories versus singularity categories}\label{sec:RKD}
\subsection{Relative perfectly valued derived categories} 
Let $A$ be a dg algebra, which is a dbg algebra concentrated in Adams grading 0, $B$ a dbg algebra and $i_A^B:A \to B$ a morphism of dbg algebras. Such a morphism induces the restriction functors of corresponding derived categories
\[
    \D^{\ZZ}(B)\to\D^{\ZZ}(A).
\]

\begin{definition}\label{def:repvd}
We define the \emph{relative perfectly valued derived category} of $B$, with respect to $A$, to be
\begin{equation}
    \pvdz(B,A)=\{M\in\D^{\ZZ}(B)\big| M|_A\in\perz(A)\}.
\end{equation}
\end{definition}
\begin{lemma}
Suppose that $B=\bigoplus_{p\in\NN}B_p$ is positively Adams graded and $B_0=A$. Then $\pvdz(B,A)$ equals the thick subcategory of $\D^{\ZZ}(B)$ generated by the $A[q\XX], q\in\ZZ$. The same conclusion holds if $B=\bigoplus_{p\in\NN}B_{-p}$ is negatively Adams graded and $B_0=A$.
\end{lemma}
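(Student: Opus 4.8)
The plan is to prove the two inclusions separately; the negatively Adams graded case then follows either by the symmetric argument (exchanging lowest and highest Adams degree) or by reversing the Adams grading, which turns $B$ into a positively graded dbg algebra $B'$ with $B'_0=A$ and induces an equivalence $\D^{\ZZ}(B)\iso\D^{\ZZ}(B')$ compatible with restriction to $A$ and with the generators $A[q\XX]$. For the inclusion $\thick(A[q\XX],q\in\ZZ)\subseteq\pvdz(B,A)$ in the positive case, note that $B_{>0}=\bigoplus_{p>0}B_p$ is a differential graded ideal of $B$, so $\pi\colon B\to B/B_{>0}=A$ is a morphism of dbg algebras whose composite with the inclusion $A=B_0\hookrightarrow B$ is $\id_A$. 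Hence the $B$-module $A=B/B_{>0}$, restricted along $A=B_0\hookrightarrow B$, is $A$ itself, so $A[q\XX]\in\pvdz(B,A)$; and since $\pvdz(B,A)$ is the preimage of the thick subcategory $\perz(A)\subseteq\D^{\ZZ}(A)$ under the triangulated restriction functor $\D^{\ZZ}(B)\to\D^{\ZZ}(A)$, it is itself a thick subcategory, which yields the inclusion.

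The content is the reverse inclusion $\pvdz(B,A)\subseteq\thick(A[q\XX])$, and I would proceed in three steps. First, I record that, since $A$ is concentrated in Adams degree $0$, a dbg $A$-module is just a $\ZZ$-indexed family of dg $A$-modules, with no morphisms across distinct Adams degrees; consequently $\perz(A)=\thick(A[q\XX])$ consists precisely of the families $(N_d)_{d\in\ZZ}$ with each $N_d$ perfect over $A$ and $N_d$ acyclic for all but finitely many $d$. Applying this to $M|_A$ for $M\in\pvdz(B,A)$ shows that every Adams component $M_d$ of $M$ is perfect over $A$ and that the cohomology of $M$ is concentrated in a finite window of Adams degrees $[a,b]$. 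Second, I replace $M$ by a genuine dbg $B$-module concentrated in Adams degrees $[a,b]$: positivity of the grading makes $M_{\geq a}=\bigoplus_{d\geq a}M_d$ a $B$-submodule of $M$ and $M_{>b}=\bigoplus_{d>b}M_d$ a $B$-submodule of $M_{\geq a}$; both complementary pieces $\bigoplus_{d<a}M_d$ and $\bigoplus_{d>b}M_d$ are acyclic (they sit outside the cohomological support), so the subquotient $M_{[a,b]}:=M_{\geq a}/M_{>b}$ is quasi-isomorphic to $M$ in $\D^{\ZZ}(B)$ and still lies in $\pvdz(B,A)$.

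The third step is an induction on the Adams amplitude $b-a$. If $a=b$, then $M$ sits in a single Adams degree, so $B_{>0}$ annihilates it and $M$ is an $A$-module that is perfect over $A$; restricting scalars along $\pi$ (a triangulated functor sending each $A[q\XX]$ to $A[q\XX]$) places $M$ in $\thick_{\D^{\ZZ}(B)}(A[q\XX])$. If $a<b$, the lowest component $M_a=M/M_{\geq a+1}$ is a $B$-module quotient of $M$ on which $B_{>0}$ acts by zero, hence a perfect $A$-module, so $M_a\in\thick_{\D^{\ZZ}(B)}(A[q\XX])$ as before; and $M_{\geq a+1}$ again lies in $\pvdz(B,A)$ (its restriction to $A$ is a subfamily of $M|_A$) with strictly smaller amplitude, so $M_{\geq a+1}\in\thick(A[q\XX])$ by induction. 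The short exact sequence of dbg $B$-modules $0\to M_{\geq a+1}\to M\to M_a\to 0$ gives a triangle in $\D^{\ZZ}(B)$, and two-out-of-three for the thick subcategory $\thick(A[q\XX])$ yields $M\in\thick(A[q\XX])$.

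The step I expect to demand the most care is the second one: one must verify that the Adams-degree truncations are honest sub- and quotient $B$-modules — this is exactly where positivity (resp.\ negativity) of the grading is used — and that the discarded summands are acyclic, so that $M_{[a,b]}$ is a legitimate replacement of $M$ inside $\D^{\ZZ}(B)$ lying in $\pvdz(B,A)$. The other ingredients — the description of $\D^{\ZZ}(A)$, exactness of restriction of scalars, and stability of thick subcategories — are routine.
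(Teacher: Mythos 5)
Your proof is correct and takes essentially the same route as the paper: one inclusion is immediate, and the other is an induction on the number of non-acyclic Adams components of $M$, splitting off the extreme component via a short exact sequence of dbg $B$-modules and using that each component is perfect over $A$ (hence lies in $\thick(A[q\XX])$ via restriction along the augmentation $B\to A$). Your write-up is in fact slightly more careful than the paper's: you first truncate $M$ to the finite Adams window carrying its cohomology (the paper's exact sequence tacitly treats the low components as zero rather than merely acyclic), and your sequence $0\to M_{\geq a+1}\to M\to M_a\to 0$ has the direction actually forced by the positive grading, whereas the paper displays the lowest component as a submodule.
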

\begin{proof}
Clearly the thick subcategory generated by the $A[q\XX], q\in\ZZ$, is contained in $\pvdz(B, A)$. Conversely, if $M=\bigoplus_{q\in\ZZ}M_q$ is a dg $B$-module in $\pvdz(B,A)$, then the dg module $M_q$ over $B_0=A$ vanishes in $\D(A)$ for all $q\ll0$. If $M_q=0$ in $\D(A)$ for all $q\in\ZZ$, then $M$ is a zero object in $\D^{\ZZ}(B)$ and there is nothing to prove. So let us assume that $M_q\neq 0$ in $\D(A)$ for some $q\in\ZZ$. Let $q_0$ be the minimal such that $M_{q_0}\neq 0$ in $\D(A)$. Then we have an exact sequence of dbg $B$-modules
\begin{equation}
0\to M_{q_0}[-q_0\XX]\to M\to\oplus_{q>q_0}M_q\to 0.
\end{equation}
Since $M$ belongs to $\pvdz(B,A)$, the component $M_{q_0}$ belongs to $\per A$ and the right hand term still belongs to $\pvdz(B,A)$. By induction on the number of indices $q$ such that $M_q\neq 0$ in $\D(A)$, the right hand term belongs to the thick subcategory generated by the $A[q\XX],  q\in\ZZ$, and so does the left hand term. Thus, the dbg $B$-module $M$ belongs to this subcategory. The argument in the case when $B$ is negatively Adams graded is analogous (using the maximal index $q_0$ such that $M_{q_0}\neq 0$ in $\D(A)$).
\end{proof}

Now let $X\in\D(A^e)$ an invertible dg bimodule with inverse $Y$, that is, there are isomorphisms
\begin{equation}\label{eq:inverse}
X\Lten_A Y\simeq A~\mbox{and}~Y\Lten_A X\simeq A
\end{equation}
in $\D(A^e)$.
Write $\shiftX$ for a cofibrant resolution of $X[\XX-1]$. We let
\begin{equation}
\TAX=T_A(\shiftX)=\bigoplus_{p\geq 0}\shiftX ^{\otimes_A^p}
\end{equation}
be the \emph{differential bigraded tensor algebra of $\shiftX$ over $A$}
and
\begin{equation}
\TEA=A\oplus Y[-\XX]
\end{equation}
the \emph{differential bigraded trivial extension algebra of $Y[-\XX]$} over $A$. 

From the canonical inclusions $A\hookrightarrow\TAX$ and $A\hookrightarrow\TEA$,
we have the relative perfectly valued derived categories $\pvdz(\TAX,A)$ and $\pvdz(\TEA,A)$ respectively. Moreover, we have the canonical projections $\TAX\to A$ and $\TEA\to A$ so that $\TAX$ and $\TEA$ become augmented over $A$. Notice that $E$ is positively Adams graded and $\TAX$ is negatively Adams graded so that the above lemma applied to both $\pvdz(\TAX, A)$ and $\pvdz(\TEA, A)$.

\subsection{Enlarged cluster categories}
\begin{definition}
We define the \emph{enlarged cluster category} of $A$ with respect to $X$ as the Verdier quotient
\begin{equation}
\C^{\ZZ}(\TAX,A)=\perz(\TAX)/\pvdz(\TAX,A).
\end{equation}
\end{definition}
This generalizes Ikeda--Qiu's construction of the $\XX$-cluster category of a path algebra, cf. Section \ref{sec:5.2}. The inclusion (morphism of dg algebras) $A\to \TAX$ induces a triangle functor
\[
    \iota_A^\ZZ= ?\Lten_A \TAX:\per A\to\perz (\TAX)
\]
mapping $A$ to $\TAX$. Since that $X$ is cofibrant, the left derived tensor product becomes the usual tensor product.
\begin{theorem}\label{thm:geniq}
The composition
\begin{equation}
\begin{tikzpicture}[xscale=.9,yscale=.6]
\draw (-4,0)node{$\Phi:$};
\draw(180:3)node(o){$\per A$}(-3,2.2)node(b){\small{$?\Lten_AY[1]$}}
(-1.5,.5)node{$\iota_A^\ZZ$} (0,0)node(t){$\perz (\TAX)$};
\draw(0:3)node(a){$\C^{\ZZ}(T,A)$}(3,2.2)node(s){\small{$[\XX]$}};
\draw[->,>=stealth](o)to(t);\draw[->,>=stealth](b)to(s);\draw[->,>=stealth](t)to(a);
\draw[->,>=stealth](-3.2,.6).. controls +(135:2) and +(45:2) ..(-3+.2,.6);
\draw[->,>=stealth](3-.2,.6).. controls +(135:2) and +(45:2) ..(3+.2,.6);
\end{tikzpicture}
\end{equation}
is a triangle equivalence.
\end{theorem}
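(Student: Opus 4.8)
The plan is to identify $\C^{\ZZ}(\TAX,A)$ with $\per A$ via a Koszul-duality argument, using the canonical projection $\TAX \to A$ that realizes $A$ as a $\TAX$-module. First I would set up the key players: the inclusion $\iota_A^\ZZ = ?\Lten_A\TAX : \per A \to \perz(\TAX)$ sends $A$ to $\TAX$, and the composition $Q_\ZZ\circ\iota_A^\ZZ$ followed by $?\Lten_A Y[1]$ gives $\Phi$. The essential surjectivity of $\Phi$ is the easy half: $\perz(\TAX)$ is generated as a thick subcategory by $\{\shiftX^{\wedge}[p\XX]\}$, which are images under $\iota_A^\ZZ$ (up to twist by $X$, hence by $Y$) of objects of $\per A$; since $\iota_A^\ZZ$ and $?\Lten_A Y[1]$ are triangle functors and $Q_\ZZ$ is a Verdier localization, $\Phi$ hits a generating set and is therefore essentially surjective (in fact dense, and its image is a thick subcategory).

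The heart of the argument is full faithfulness. I would compute $\Hom_{\C^\ZZ(\TAX,A)}(\TAX, \TAX[n])$ and compare it with $\Hom_{\per A}(A,A[n])\Lten(\text{twist})$. By the calculus of fractions for the Verdier quotient $\perz(\TAX)/\pvdz(\TAX,A)$, a morphism $\TAX \to \TAX[n]$ in the quotient is computed using a cofibrant resolution of $A$ over $\TAX$ — concretely, the bar-type resolution coming from the tensor-algebra structure $\TAX = T_A(\shiftX)$. The point is that $A$, viewed as a $\TAX$-module via the augmentation, admits the standard resolution $\cdots \to \TAX\otimes_A\shiftX^{\otimes_A 2} \to \TAX\otimes_A\shiftX \to \TAX \to A$, and this resolution has all terms in the subcategory generated by $\TAX$ while its "defect" from being $A$ lies in $\pvdz(\TAX,A)$ (since $\shiftX = X[\XX-1]$ has nonzero Adams degree, the resolution is Adams-graded and each $\TAX$-module $\shiftX^{\otimes_A p}$ restricts to something perfect over $A$, after accounting for degrees). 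Thus in the Verdier quotient $\TAX$ becomes identified, up to the shift $[\XX]$, with $A$, and one reads off
\[
\Hom_{\C^\ZZ(\TAX,A)}(\TAX,\TAX[n]) \;\cong\; \bigoplus_{p\geq 0}\Hom_{\per A}\bigl(A, X^{\otimes_A p}[p(\XX-1)+n]\bigr),
\]
which, after applying $?\Lten_A Y[1]$ and unwinding the bookkeeping with the Adams shift $[\XX]$, matches $\bigoplus_{p\in\ZZ}\Hom_{\per A}(A,A[n+p\XX])$ on the appropriate Adams-graded Hom — i.e.\ an isomorphism of ($\ZZ$-graded) Hom-spaces. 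One then extends from the generator $\TAX$ to all of $\perz(\TAX)$ by a standard dévissage (both sides are triangle functors, $\per A = \thick(A)$, and $\Phi$ preserves the relevant generating set), so $\Phi$ is fully faithful on $\thick(\TAX) = \perz(\TAX)$ modulo the kernel, which is exactly $\pvdz(\TAX,A)$.

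The main obstacle I anticipate is the careful bookkeeping of the two shifts — the differential shift $[1]$ and the Adams shift $[\XX]$ — through the resolution and the Verdier localization, and in particular verifying that the "error terms" in the bar resolution genuinely land in $\pvdz(\TAX,A)$ rather than merely in $\perz(\TAX)$: this is where the hypothesis that $\shiftX$ is a resolution of $X[\XX-1]$ (nonzero Adams degree) is used decisively, via the lemma identifying $\pvdz(\TAX,A)$ with $\thick(A[q\XX])$ for the negatively Adams-graded algebra $\TAX$. An alternative, cleaner route — which I would mention — is to invoke the general relative Koszul duality machinery of Section~\ref{sec:RKD} (Theorem~\ref{thm:RKD}) and deduce this statement as the top row of diagram~\eqref{eq:comdiaint}; but since Theorem~\ref{thm:geniq} is stated before that, the self-contained resolution argument above is the honest proof, with the twist functor $?\Lten_A Y[1]$ chosen precisely to absorb the $X$-twists introduced by $\iota_A^\ZZ$ and make $A\mapsto \TAX \mapsto A$.
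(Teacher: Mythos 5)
Your overall plan (essential surjectivity on the generators, full faithfulness by a Hom computation at the generator, then d\'evissage) is a legitimate alternative to the paper's argument, which instead builds an explicit quasi-inverse $\Psi(M)=\hocolim_q\bigl(M_{-q}\otimes_A(Y[1])^{\otimes_A^q}\bigr)$ out of the Adams components of $M$, checks that $\Psi$ kills $\pvdz(\TAX,A)$, and verifies $\Psi\Phi\simeq\id$ and $\id\simeq\Phi\Psi$ on the generators $\TAX[q\XX]$ in the quotient. However, your central computation has a genuine gap. The displayed isomorphism
\[
\Hom_{\C^{\ZZ}(\TAX,A)}(\TAX,\TAX[n])\;\cong\;\bigoplus_{p\geq 0}\Hom_{\per A}\bigl(A,X^{\otimes_A p}[p(\XX-1)+n]\bigr)
\]
is (after restriction along $A\to\TAX$, using $\TAX|_A=\bigoplus_{p\geq0}\shiftX^{\otimes_A^p}$) the graded Hom computed in $\perz(\TAX)$, \emph{before} passing to the Verdier quotient. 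Passing to the quotient genuinely changes these spaces: since $\TAX$ is negatively Adams graded, $\Hom_{\perz(\TAX)}(\TAX,\TAX[n+q\XX])=0$ for all $q>0$, whereas in $\C^{\ZZ}(\TAX,A)$ one has $\TAX[\XX]\simeq\iota_A^{\ZZ}(Y[1])$, so these Homs become nonzero in general. Evaluating the quotient Homs requires the calculus-of-fractions colimit over roofs $\TAX'\to\TAX$ with cone in $\pvdz(\TAX,A)$: one must exhibit a cofinal system (e.g.\ the Adams truncations $\sigma_{\leq-q}\TAX\cong\shiftX^{\otimes_A^q}\otimes_A\TAX$, using that $\pvdz(\TAX,A)=\thick(A[q\XX],\,q\in\ZZ)$ is supported in finitely many Adams degrees for each object) and show the transition maps stabilize to give $H^n(A)$ in Adams degree $0$. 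That colimit argument is exactly where the content of the theorem sits, and in your write-up it is replaced by ``unwinding the bookkeeping''.

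Two further points. First, the assertion that ``in the Verdier quotient $\TAX$ becomes identified, up to the shift $[\XX]$, with $A$'' is false: $A$ lies in $\pvdz(\TAX,A)$ and hence becomes \emph{zero} in the quotient. The correct consequence of the short exact sequence $0\to\shiftX\otimes_A\TAX\to\TAX\to A\to0$ is the relation $\TAX\simeq\shiftX\otimes_A\TAX$ in $\C^{\ZZ}(\TAX,A)$, equivalently $\TAX[\XX]\simeq\iota_A^{\ZZ}(Y[1])$; this is what both makes your essential-surjectivity step work and forces the Adams shift $[\XX]$ to correspond to $?\Lten_AY[1]$, and it is the relation the paper exploits when constructing $\Psi$ and the natural transformation $\id\to\Phi\Psi$. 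Second, the longer ``bar-type'' resolution $\cdots\to\TAX\otimes_A\shiftX^{\otimes_A^2}\to\TAX\otimes_A\shiftX\to\TAX\to A$ is not needed (and is not exact with the naive differentials); the two-term sequence above is the resolution actually used. With the colimit computation carried out and the $[\XX]$-compatibility checked, your route would give a correct proof, but as written the key step is missing.
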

\begin{remark}\label{rmk:iq}
The theorem realizes the perfect derived category $\per A$ as the enlarged cluster category, which is a generalization of \cite[Theorem 6.7]{IQ1}.
\end{remark}
\begin{proof}
For $M\in\perz(\TAX)$, the $\TAX$-module structure $M\otimes_{A}\TAX\to M$ on $M$ is given by a dg $A$-module structure on $M$, together with a morphism of dg $A$-modules $M\otimes_A\shiftX\to M$, which is equivalent to
\[M\to M\otimes_{A}Y[1-\XX].\]
Taking the $q$-th components of the Adams grading on both sides for $q\in\ZZ$, we obtain
\[M_q\to M_{q-1}\otimes_A Y[1],\]
resulting in the following direct system
\[M_0\to M_{-1}\otimes_A Y[1]\to M_{-2}\otimes_A (Y[1])^{\otimes_A^2}\to\cdots\to M_{-q}\otimes_A (Y[1])^{\otimes^q_A}\to\cdots.\]
We now define the functor
\begin{align}
\begin{split}
\Psi:\perz(\TAX)&\to \per A\\
M&\mapsto \hocolim_q\big(M_{-q}\otimes_A (Y[1])^{\otimes^q_A}\big).
\end{split}
\end{align}
By construction, we have $\Psi(M[\XX])=M\otimes_A Y[1]$. Moreover, $\Psi$ vanishes on \[\{A[q\XX]\mid q\in\ZZ\},\] and hence also on $\pvdz(\TAX,A)$. Therefore, the functor $\Psi$ induces a functor from $\C^{\ZZ}(\TAX,A)$ to $\per A$.

For $L\in\per A$, we have
\begin{align}
\begin{split}
\Phi(L)&=L\otimes_A \TAX=L\otimes_A \bigoplus_{p\geq 0}\shiftX ^{\otimes_A^p}\\
&=L\oplus \big(L\otimes_A \shiftX\big)\oplus\big(L\otimes_A \shiftX ^{\otimes_A^2}\big)\oplus\cdots.
\end{split}
\end{align}
Thus, for each $q\in\ZZ$, we have
\begin{align}
\begin{split}
\Phi(L)_{-q}\otimes_A (Y[1])^{\otimes^q_A}&=L\otimes_A \shiftX ^{\otimes_A^q}[-q\XX]\otimes_A (Y[1])^{\otimes^q_A}\\
&=L\otimes_A (X\otimes_A Y)^{\otimes_A^q} \simeq L,
\end{split}
\end{align}
which implies that
\[\Psi\circ\Phi(L)=L.\]
In particular, we have
\[\Psi(\TAX)=\Psi\circ\Phi(A)=A.\]
For $q\in\ZZ$, we further compute 
\begin{align}\label{eq:com}
\begin{split}
\Phi\circ\Psi(\TAX[q\XX])&=\big(A\otimes_A (Y[1])^{\otimes_A^q}\big)\otimes_A \TAX\\
&=(Y[1])^{\otimes_A^q}\otimes_A\big(A\oplus \shiftX\oplus\cdots\oplus \shiftX ^{\otimes_A^p}\oplus\cdots\big)\\
&\simeq \bigoplus_{i=0}^{q-1} \big((Y[1])^{\otimes_A^{q-i}}[i\XX] \big)\oplus A[q\XX]\oplus \shiftX[q\XX]\oplus\cdots\oplus\shiftX ^{\otimes_A^p}[q\XX]\cdots\\
&=\bigoplus_{i=1}^{q-1} (Y[1])^{\otimes_A^{q-i}}[i\XX]\oplus\TAX[q\XX].
\end{split}
\end{align}
We define a natural transformation $\eta:\id_{\perz(\TAX)}\to\Phi\circ\Psi$ as follows. Since $\perz(\TAX)$ is the thick subcategory generated by $\{\TAX[q\XX], q\in\ZZ\}$, it suffices to define the evaluations of $\eta$ on the generators
\[\{\eta_{\TAX[q\XX]}:\TAX[q\XX]\to\Phi\circ\Psi(\TAX[q\XX])\}_{q\in\ZZ}\]
as the inclusion into the last component of \eqref{eq:com}, which is an isomorphism. Furthermore, $\eta:\id_{\C^{\ZZ}(\TAX,A)}\to\Phi\circ\Psi$ is also a natural isomorphism when restricted on $\C^{\ZZ}(\TAX,A)$, which implies the desired equivalence.

Finally, for $M\in\per A$, we have that
\[[\XX]\circ\iota_A^{\ZZ}(M)=(M\Lten_A T)[\XX]\]
and
\[\iota_A^{\ZZ}\circ(M\Lten_AY[1])=M\Lten_AY[1]\Lten_A T\simeq M\Lten_A(Y[1]\oplus T[\XX]),\]
which coincide in $\C^{\ZZ}(\TAX,A)$ because the object $M\Lten_A Y[1]$ belongs to $\thick(A[q\XX], q\in\ZZ)$ and hence vanishes in $\C^{\ZZ}(\TAX,A)$.
\end{proof}
\begin{remark}
In fact, we have 
\begin{equation}
\perz(\TAX)\simeq\big(\perz(A)/_{ll}^{dbg}(?\Lten_A\shiftX)^{\NN}\big)^{tr},
\end{equation}
which is the triangulated hull of the bigraded left lax quotient. When considering the Verdier quotient, we have that $?\Lten_A\shiftX$ becomes invertible in $\perz(\TAX)/\pvdz(\TAX,A)$, which corresponds to taking the dg localization $\perz(A)/^{dbg}(?\Lten_A\shiftX)^{\ZZ}$ with respect to $$M\Lten_A\shiftX\to M$$ for all $M\in\perz(A)$ in the bigraded left lax quotient. Therefore, we have the commutative diagram
\begin{equation}
\begin{tikzcd}
\perz(\TAX) \arrow[r, "\simeq"] \arrow[d, two heads] & \big(\perz(A)/_{ll}^{dbg}(?\Lten_A\shiftX)^{\NN}\big)^{tr} \arrow[d, two heads] \\
{\C^{\ZZ}(\TAX,A)} \arrow[r, "\simeq"]    &\big(\perz(A)/^{dbg}(?\Lten_A\shiftX)^{\ZZ}\big)^{tr},\end{tikzcd}
\end{equation}
where $\perz(A)/^{dbg}(?\Lten_A\shiftX)^{\ZZ}$ is automatically pretriangulated since it arises as a quotient of some perfect derived category. 
\end{remark}
\subsection{Shrunk singularity categories}
\begin{definition}
We define the \emph{shrunk singularity category} of $A$ as the Verdier quotient
\begin{equation}
\sg^{\ZZ}(\TEA,A)=\pvdz(\TEA,A)/\perz(\TEA).
\end{equation}
\end{definition}
The following theorem generalizes Happel's description \cite{H} of the derived category of a finite-dimensional algebra of finite global dimension. We learned about the theorem and its proof from Norihiro Hanihara \cite{Han}. It holds under the weaker assumption that the functor $\RHom_A(?,Y)$ induces an equivalence
\[
    \RHom_A(?,Y)\colon (\per A)^{op}\iso\per(A^{op}).
\]
Alternatively, the theorem follows from \Cref{thm:geniq} via the relative Koszul duality in \Cref{thm:RKD}.

\begin{theorem}[{Hanihara \cite{Han}}]\label{thm:sg}
The restriction along the augmentation $E\to A$ induces an equivalence
\begin{equation}
\begin{tikzpicture}[xscale=.6,yscale=.6]
\draw (-4.5,0)node{$\Psi:$};
\draw(180:3)node(o){$\per A$}(-3,2.2)node(b){\small{$?\Lten_AY[1]$}}
(0,.3)node{$\sim$};
\draw(0:3)node(a){$\sg^{\ZZ}(\TEA,A).$}(3,2.2)node(s){\small{$[\XX]$}};
\draw[->,>=stealth](o)to(a);\draw[->,>=stealth](b)to(s);
\draw[->,>=stealth](-3.2,.6).. controls +(135:2) and +(45:2) ..(-3+.2,.6);
\draw[->,>=stealth](3-.2,.6).. controls +(135:2) and +(45:2) ..(3+.2,.6);
\end{tikzpicture}
\end{equation}
\end{theorem}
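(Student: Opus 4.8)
\textbf{Proof proposal for Theorem~\ref{thm:sg}.}

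The plan is to show directly that $\sg^{\ZZ}(\TEA,A)$ is equivalent to $\per A$ by analyzing the structure of $\pvdz(\TEA,A)$ and $\perz(\TEA)$, mirroring the argument for Theorem~\ref{thm:geniq} but in the ``singularity'' direction. By the lemma on relatively perfectly valued derived categories, since $E=\TEA=A\oplus Y[-\XX]$ is positively Adams graded with $E_0=A$, the category $\pvdz(\TEA,A)$ is the thick subcategory of $\D^{\ZZ}(\TEA)$ generated by the shifted copies $A[q\XX]$, $q\in\ZZ$, of the $A$-module $A$ viewed as an $E$-module via the augmentation $E\to A$. So the restriction functor $\Psi=?\Lten_E A\colon \D(A)\to \D^{\ZZ}(\TEA)$ (equivalently, restriction along $E\to A$, pulled back) lands in $\pvdz(\TEA,A)$, is exact, sends $A$ to $A$, and hence sends $\per A$ into $\pvdz(\TEA,A)$; composing with the Verdier projection gives the candidate functor into $\sg^{\ZZ}(\TEA,A)$. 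One then needs (i) essential surjectivity, which is immediate since $\pvdz(\TEA,A)=\thick(A[q\XX])$ and the $A[q\XX]$ are precisely the $[\XX]$-shifts of the image of $A$; and (ii) that $\Psi$ becomes fully faithful after passing to the Verdier quotient by $\perz(\TEA)$.

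For the fully faithfulness, the key computation is the morphism spaces in $\sg^{\ZZ}(\TEA,A)$ between the generators $A$ and $A[q\XX]$. I would compute $\RHomz_{\TEA}(A,A)$ using the bar-type resolution of $A$ over $E=A\oplus Y[-\XX]$: because $E$ is the trivial extension, $A$ admits a standard $E$-free resolution whose terms are $E\otimes_A (Y[-\XX])^{\otimes_A^p}$, so that $\RHom_E(A,A)$ is computed by a complex with terms $\RHom_A\big((Y[1])^{\otimes_A^p}[-p\XX],A\big)\simeq X^{\otimes_A^p}[-p][p\XX]$ using that $X$ is inverse to $Y$ and $\RHom_A(Y,A)\simeq X$ (here one uses the hypothesis that $\RHom_A(?,Y)\colon(\per A)^{op}\iso\per(A^{op})$, which is what makes $Y$ ``invertible enough''). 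The upshot should be an isomorphism $\RHomz_{\TEA}(A,A)\simeq \bigoplus_{p\geq 0} X^{\otimes_A^p}[-p][p\XX]$, i.e. the $\XX$-completed positive half, exactly complementary to the $\RHomz_{\TAX}(A,?)$ computation implicit in Theorem~\ref{thm:geniq}. Then one checks that killing $\perz(\TEA)$ has the effect of ``completing'' this graded object: the perfect objects over $E$ are built from $E=A\oplus Y[-\XX]$ itself, and modding out by them inverts the canonical map that shifts Adams degree, collapsing the one-sided sum $\bigoplus_{p\geq 0}$ into the full two-sided $\bigoplus_{p\in\ZZ}$, which matches $\Homz_{\per A}(A,A)=\RHomz_A(A,A)$ after identifying $X^{\otimes_A^p}[-p][p\XX]$ in degree $p$ with the $[\XX]^p$-twist. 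The compatibility with $[\XX]$ and $?\Lten_A Y[1]$ is then read off from the construction: $[\XX]$ on $\sg^{\ZZ}$ shifts Adams degree, and on $\per A$ the corresponding autoequivalence sends $A\mapsto \RHom_E(A,A[\XX])|_{\per A}$-component, which by the computation above is $X[-1]$, i.e. the inverse of $Y[1]$ up to the sign conventions—so I would double check whether the labelling should be $?\Lten_A Y[1]$ or its inverse, matching it against the diagram in Theorem~\ref{thm:RKDint}.

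The main obstacle, I expect, is step (ii): controlling the Verdier quotient $\pvdz(\TEA,A)/\perz(\TEA)$ concretely enough to see it is \emph{all} of $\per A$ rather than some proper subcategory or localization. The cleanest route is probably to avoid explicit morphism-space bookkeeping and instead produce a quasi-inverse: restriction along $E\to A$ gives $\Psi$, and in the other direction one wants the functor ``take the $\XX$-completion / pass to the homotopy colimit over the Adams grading'', dual to the $\hocolim$ functor $\Psi$ of Theorem~\ref{thm:geniq}; the diagram in Theorem~\ref{thm:RKDint} suggests that $[1]\circ\Phi$ composed with the relative Koszul duality equivalence $\RHomz_{\TAX}(A,?)\colon \perz(\TAX)\iso\perz(\TEA)$ supplies exactly this. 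Since the excerpt explicitly says the theorem ``follows from Theorem~\ref{thm:geniq} via the relative Koszul duality in Theorem~\ref{thm:RKD}'', I would present that as the primary proof: establish the relative Koszul duality equivalence $\RHomz_{\TAX}(A,?)$ intertwining the two short exact sequences of triangulated categories, observe it carries $\pvdz(\TAX,A)$ to $\perz(\TEA)$ and $\perz(\TAX)$ to $\pvdz(\TEA,A)$ (so it swaps the roles of ``thick subcategory generated by $A$'' and ``thick subcategory generated by the algebra''), and conclude that it descends to an equivalence $\C^{\ZZ}(\TAX,A)\iso \sg^{\ZZ}(\TEA,A)$; then compose with $\Phi^{-1}$ from Theorem~\ref{thm:geniq} and track the autoequivalences. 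The self-contained argument via the bar resolution is then relegated to a remark or used only to verify the hypothesis $\RHom_A(?,Y)\colon(\per A)^{op}\iso\per(A^{op})$ suffices.
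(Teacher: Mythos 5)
Your overall strategy is the alternative the paper itself mentions just before its proof (deduce the statement from \Cref{thm:geniq} via the relative Koszul duality of \Cref{thm:RKD}), but it is not the proof the paper gives, and as written your argument has two genuine gaps. The paper's proof is direct: (a) it shows that $A$ generates $\sg^{\ZZ}(\TEA,A)$, using the triangle $Y[-\XX]\to\TEA\to A\to Y[1-\XX]$, which gives $A\simeq Y[1-\XX]$ in the quotient, together with $\thick(Y)=\thick(A)$, to get $A[q\XX]\in\thick(A)$ for all $q\in\ZZ$; and (b) it computes $\REnd_{\sg^{\ZZ}(\TEA,A)}(A)\simeq A$ using the triangle of \cite[Proposition 1.9]{HI} comparing $\RHomz_{\TEA}(A,A)$ with morphisms in the quotient, the point being that the correction term $A\Lten_{\TEA}\RHomz_{\TEA}(A,\TEA)\simeq A\Lten_{\TEA}Y[-\XX]$ is concentrated in Adams degrees $\geq 1$, while the Adams degree $0$ part of $\RHomz_{\TEA}(A,A)$ is $A$. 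Step (a) is exactly what is missing when you declare essential surjectivity ``immediate'': the essential image of $\per A$ consists of objects induced in Adams degree $0$, and there is no reason for it to be stable under $[\XX]$ until one has $A\simeq Y[1-\XX]$ in the quotient; similarly, your claim that killing $\perz(\TEA)$ ``completes'' the one-sided sum $\bigoplus_{p\geq 0}$ is precisely the assertion that requires proof, and the Adams-degree argument via \cite[Proposition 1.9]{HI} is the paper's precise substitute for it.

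Second, the Koszul-duality route is non-circular only if you establish the vertical equivalences of \Cref{thm:RKD} without invoking $\Psi$ (the paper's proof of \Cref{thm:RKD} quotes \Cref{thm:sg} for $\Psi$). This is possible: it amounts to the computations $\RHomz_{\TAX}(A,A)\simeq\TEA$ and $\RHomz_{\TAX}(A,\TAX)\simeq Y[-\XX]$, plus $\thick(A[q\XX],q\in\ZZ)=\thick(Y[q\XX],q\in\ZZ)$, which is how the paper proceeds. Your proposal does not carry these out; the only computation you sketch is the dual one, $\RHomz_{\TEA}(A,A)\simeq\TAX$, via a bar-type resolution. That can be made to work (Morita theory for the generator $A$ of $\pvdz(\TEA,A)=\thick(A[q\XX],q\in\ZZ)$), but you would still have to identify the image of $\perz(\TEA)$ under the resulting equivalence and check on the generator that the induced equivalence of quotients, composed with the inverse of $\Phi$, is restriction along the augmentation up to a shift; that last check is also what settles your hesitation between $?\Lten_AY[1]$ and its inverse (the correct statement is $[\XX]\circ\Psi\simeq\Psi\circ(?\Lten_AY[1])$, with the extra $[1]$ absorbed into $[1]\circ\Phi$ on the cluster side). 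So the plan is sound and genuinely different from the paper's written proof, but to count as a proof it needs either the paper's two direct steps or the full Koszul-duality computations, neither of which is supplied.
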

\begin{remark}\label{rmk:sg}
The theorem generalizes the results of \cite[Theorem 10.10]{H} combined with \cite[Theorem 2.1]{R}.
\end{remark}
\begin{proof}
It is sufficient to show that $A$ generates $\sg^{\ZZ}(\TEA,A)$ and $\REnd_{\sg^{\ZZ}(\TEA,A)}(A)\simeq A$. For the first claim, we only need to show that $A[q\XX]\in\thick(A)$ for any $q\in\ZZ$. Since  there is a triangle
\[Y[-\XX]\to \TEA\to A\to Y[1-\XX]\]
in $\pvdz(E,A)$, we have
\[A\simeq Y[1-\XX]\]
in $\sg^{\ZZ}(\TEA,A)$. Thus we have $A[\XX]\in\thick(Y)=\thick(A)$, which implies that \[A[q\XX]\in\thick(A)\] for $q>0$. On the other hand, since $Y[-\XX]\in\thick(A)=\thick(Y)$, it follows that $A[q\XX]\simeq Y[(q-1)\XX+1]$ belongs to $\thick(Y)=\thick(A)$ for $q<0$.

By \cite[Proposition 1.9]{HI}, we have a triangle
\begin{equation}\label{eq:Htri}
A\Lten_{\TEA}\RHomz_{\TEA}(A, \TEA)\to\RHomz_{\TEA}(A, A)\to\RHomz_{\sg^{\ZZ}(\TEA,A)}(A,A)\to A\Lten_{\TEA}\RHomz_{\TEA}(A, \TEA)[1]
\end{equation}
in $\D^{\ZZ}(\k)$. Since we have $\RHom_A(A,Y)\cong Y$ and $\RHom_A(Y,Y)\cong A$, we obtain
\[\RHomz_A(\TEA,Y)\cong \TEA[\XX].\]
Thus, we have
\begin{align}\label{eq:adams1}
\begin{split}
A\Lten_{\TEA}\RHomz_{\TEA}(A, \TEA)&\cong A\Lten_{\TEA}\RHomz_{\TEA}(A, \RHomz_A(\TEA,Y)[-\XX])\\
&\cong A\Lten_{\TEA}\RHomz_{\TEA}(A\otimes_A\TEA, Y)[-\XX]\\
&\cong A\Lten_{\TEA}\RHomz_{\TEA}(\TEA, Y)[-\XX]\\
&\cong A\Lten_{\TEA}Y[-\XX].
\end{split}
\end{align}
The dbg algebra $\TEA$ is concentrated in Adams degrees $\geq 0$ and its component in Adams degrees 0 is $A$. It follows that if $M$ is a dg $\TEA$-module concentrated in Adams degrees non negative and whose underlying dg $A$-module is cofibrant, then $M$ has a cofibrant resolution $\widetilde{M}$ over $\TEA$ such that we have a short exact sequence
\[0\to M\otimes_A\TEA\to\widetilde{M}\to\widetilde{M}'\to 0\]
where $\widetilde{M}'$ is concentrated in Adams degrees $\geq 1$. If we apply this to $M=A$, we find that \eqref{eq:adams1} is concentrated in Adams degrees $\geq 1$ and that the component of $\RHom_{\TEA}(A,A)$ in Adams degree 0 is quasi-isomorphic to $\RHom_{A}(A,A)=A$. From the triangle \eqref{eq:Htri}, it follows that $\RHom_{\sg^{\ZZ}(\TEA,A)}(A,A)$ is quasi-isomorphic to $A$. Finally, the result of compatibility of $[X], ?\Lten_AY[1]$ and $\Psi$ follows from the one shown in \Cref{thm:geniq}, combined the relative Koszul duality in \Cref{thm:RKD}.
\end{proof}
\subsection{Link via relative Koszul duality}
\begin{theorem}\label{thm:RKD}
The adjoint pair
\begin{equation}
\begin{tikzcd}[column sep=60]
\perz(\TAX) \arrow[r, "{\RHomz_{\TAX}(A,?)}", shift left] & {\pvdz(\TEA,A)} \arrow[l, "?\Lten_{\TEA}A", shift left]
\end{tikzcd}
\end{equation}
induces the following commutative diagram
\begin{equation}\label{eq:comdia}
\begin{tikzcd}[column sep=36,row sep=20]
{\pvdz(\TAX,A)} \arrow[r, hook] \arrow[dd, "\wr"] & \perz(\TAX) \arrow[r] \arrow[dd, "{\wr. \RHomz_\TAX(A,?)}"] & {\C^{\ZZ}(\TAX,A)} \arrow[dd, "\wr"] & \\
  &  &  & \per A, \arrow[ld, "{\Psi,\sim}"] \arrow[lu, "{[1]\circ\Phi,\sim}"']\\
\perz(\TEA) \arrow[r, hook]   & {\pvdz(\TEA,A)} \arrow[r] & {\sg^{\ZZ}(\TEA,A)}  &
\end{tikzcd}
\end{equation}
where we have $[\XX]\circ[1]\circ\Phi\simeq[1]\circ\Phi\circ(?\Lten_AY[1]),[\XX]\circ\Psi\simeq\Psi\circ(?\Lten_AY[1])$ and all the other functors commute with $[\XX]$.
\end{theorem}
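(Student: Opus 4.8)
The plan is to establish the Koszul-duality equivalence as the central functor, and then derive the commutativity of both squares and the two outer triangles by identifying everything explicitly on the natural generators. First I would verify that the adjoint pair $(?\Lten_{\TEA}A, \RHomz_{\TAX}(A,?))$ is well-defined between the stated categories: one checks that $\RHomz_A(\TAX,A)$, computed via the cofibrant bimodule resolution $\shiftX$, is quasi-isomorphic to $\TEA$ (up to a shift $[1-\XX]$ or similar), so that $\RHomz_{\TAX}(A,?)$ sends $\TAX$ to $A$ viewed as a $\TEA$-module, hence sends $\perz(\TAX)=\thick(\TAX[q\XX])$ into $\pvdz(\TEA,A)=\thick(A[q\XX])$; this is the relative Koszul duality at the level of the distinguished objects. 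That the induced functor is an equivalence $\perz(\TAX)\iso\pvdz(\TEA,A)$ then follows by a standard argument: both categories are generated by a single object ($\TAX$ and $A$ respectively) together with its Adams shifts, the functor matches these generators, and it induces isomorphisms on the relevant $\RHomz$-complexes because $\RHomz_{\TAX}(\TAX,\TAX)=\TAX$ corresponds to $\RHomz_{\TEA}(A,A)$, which one computes to be $\TEA$ again by the same bimodule computation (this is precisely the symmetry of Koszul duality, using that $X$ is invertible with inverse $Y$).

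Next I would check that this equivalence is compatible with the two inclusions of the ``perfect'' pieces and the two projections onto the quotients, giving the two commuting squares. The left-hand square commutes because $\RHomz_{\TAX}(A,?)$ sends $\pvdz(\TAX,A)=\thick(A[q\XX]\text{ as }\TAX\text{-modules})$ to $\perz(\TEA)=\thick(\TEA[q\XX])$: this is again a computation on generators, namely $\RHomz_{\TAX}(A,A)\simeq\TEA$ up to a shift, where on the left $A$ is the augmentation module. Once the left square commutes, the equivalence descends to the Verdier quotients, giving the right-hand square $\C^{\ZZ}(\TAX,A)\iso\sg^{\ZZ}(\TEA,A)$ automatically, since Verdier quotients are functorial. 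For the two outer triangles, the lower one is the content of \Cref{thm:sg} (the functor $\Psi$), the functor $[1]\circ\Phi$ on the upper triangle comes from \Cref{thm:geniq}, and the commutativity of the whole diagram with the triangle just established forces $\Psi\simeq([1]\circ\Phi)$ composed with the induced equivalence $\C^{\ZZ}(\TAX,A)\iso\sg^{\ZZ}(\TEA,A)$ up to the shift $[1]$; alternatively one checks directly on $A\in\per A$ that $\Psi(A)=A$ is carried to $A\in\sg^{\ZZ}(\TEA,A)$, which is the image of $\TAX=\Phi(A)$ under Koszul duality.

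Finally I would track the Adams grading shifts. On $\perz(\TAX)$ the shift $[\XX]$ corresponds under $[1]\circ\Phi$ to $?\Lten_A Y[1]$ by the formula $\Phi(M[\XX])=M\otimes_A Y[1]$ established in \Cref{thm:geniq}, whence $[\XX]\circ[1]\circ\Phi\simeq[1]\circ\Phi\circ(?\Lten_AY[1])$ (the extra $[1]$ commutes past $[\XX]$ harmlessly); similarly $[\XX]\circ\Psi\simeq\Psi\circ(?\Lten_AY[1])$ from \Cref{thm:geniq}–\Cref{thm:sg}. For the Koszul duality functor $\RHomz_{\TAX}(A,?)$ itself one checks it is $\XX$-equivariant because it is defined without reference to the Adams grading other than through $\RHomz$, which sums over all Adams degrees; hence all the remaining functors in the diagram commute with $[\XX]$ on the nose.

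The main obstacle I expect is the precise bookkeeping of shifts in the Koszul duality computation $\RHomz_A(\TAX,A)\simeq\TEA[?]$ and $\RHomz_{\TAX}(A,A)\simeq\TEA[?]$: one must pin down exactly which differential and Adams shifts appear so that $\shiftX=X[\XX-1]$ is dualized to $Y[-\XX]$ correctly and the ``$[1]$'' in ``$[1]\circ\Phi$'' emerges for the right reason. Once these shifts are nailed down, the generator-by-generator verification of the equivalence and of the commutativity of the squares is routine, relying on the thick-generation statements from the two lemmas of \Cref{sec:RKD} and the invertibility of $X$.
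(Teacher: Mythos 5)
Your overall architecture matches the paper's (compute the Koszul duality functor on generators, use thick generation, descend to the Verdier quotients, pin down the triangle by evaluating on $A$, and track $[\XX]$ via $\Phi(M[\XX])=M\otimes_AY[1]$), but two of your concrete identifications are wrong, and they are exactly the points carrying the content of the statement. First, $\RHomz_{\TAX}(A,\TAX)$ is not $A$: the paper's computation (resolving $A$ by $\Cone(\shiftX\otimes_A\TAX\to\TAX)$) gives $\RHomz_{\TAX}(A,\TAX)\simeq Y[-\XX]$. This only lands in $\pvdz(\TEA,A)$ because $Y$ is invertible, so $\thick(Y[q\XX])=\thick(A[q\XX])$; it is not ``the generator $A$'' up to an Adams shift. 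This error propagates to your ``alternative direct check'' of the triangle: the Koszul dual of $\TAX=\Phi(A)$ is $Y[-\XX]$, which in $\sg^{\ZZ}(\TEA,A)$ is isomorphic to $A[-1]$ (via the triangle $Y[-\XX]\to\TEA\to A$), and this $[-1]$ is precisely the origin of the ``$[1]$'' in $[1]\circ\Phi$; your check as written would produce a diagram commuting with $\Phi$ rather than $[1]\circ\Phi$. Second, your full-faithfulness justification rests on the claim that $\RHomz_{\TEA}(A,A)\simeq\TEA$, which is false: Koszul duality goes the other way, $\RHomz_{\TEA}(A,A)\simeq\TAX$ (already in the classical example $\Ext^*_{\k[v]/(v^2)}(\k,\k)\cong\k[u]$), and in any case the relevant endomorphism object for the generator $\TAX$ is $\RHomz_{\TEA}(Y[-\XX],Y[-\XX])$, since that is where $\TAX$ actually goes. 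The paper instead builds the equivalences from the computation $\RHomz_{\TAX}(A,A)\simeq\TEA$ (giving the leftmost column, where $A$ \emph{is} the generator of $\pvdz(\TAX,A)$) together with $\RHomz_{\TAX}(A,\TAX)\simeq Y[-\XX]$ for the middle column.

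A further point you pass over: to deduce the equivalence $\pvdz(\TAX,A)\simeq\perz(\TEA)$ one needs $\RHomz_{\TAX}(A,A)\simeq\TEA$ not merely as differential bigraded $\k$-modules but compatibly with the multiplicative structure; the paper handles this by endowing the resolution $P=\Cone(\shiftX\otimes_A\TAX\to\TAX)$ with an explicit left dbg $\TEA$-module structure (the $Y[-\XX]$-action given by the evaluation $Y[-\XX]\otimes_A\shiftX[1]\to A$). So while you correctly identified that the shift bookkeeping is the delicate part, the gaps are not only about shifts: the appearance of the invertible bimodule $Y$ in the image of $\TAX$, the direction of the Koszul duality computation used for full faithfulness, and the algebra/module-structure upgrade all need to be repaired for the proof to go through.
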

\begin{proof}
We have the equivalences $\Phi$ and $\Psi$ by \Cref{thm:geniq} and \Cref{thm:sg}. We now show the vertical equivalences. First, we show that $\RHomz_{\TAX}(A,A)=\TEA$, which implies the leftmost equivalence. We have the exact sequence of dbg $\TAX$-modules
\begin{equation}
0\to\shiftX\otimes_A\TAX\to\TAX\to A\to 0,
\end{equation}
where $\TAX$ and $\shiftX$ are assumed cofibrant as dg $A$-modules. It yields a cofibrant resolution (quasi-isomorphism) $\Cone(\shiftX\otimes_A\TAX\to\TAX)\to A$. We have quasi-isomorphisms
\begin{align*}
\begin{split}
\RHomz_{\TAX}(A,A)&\simeq\Homz_{\TAX}(\Cone(\shiftX\otimes_A\TAX\to\TAX), A)\\
&\simeq\Cone\big(\Homz_{\TAX}(T,A)\to\Homz_{\TAX}(\shiftX\otimes_A T,A)\big)[-1]\\
&\cong\Cone\big(A\to\Homz_{A}(\shiftX,A)\big)[-1]\\
&\simeq A\oplus \big(\RHomz_A(X[\XX-1],A)\big)[-1]\\
&\simeq A\oplus \big(\RHom_A(X,A)[1-\XX]\big)[-1]\\
&\simeq A\oplus Y[-\XX]=\TEA,
\end{split}
\end{align*}
where the last equivalence follows from the fact that $Y$ is inverse to $X$, so that
\[\RHom_A(X,A)\simeq Y.\]
So we have a quasi-isomorphism of differential bigraded $\k$-modules
\[\RHomz_{\TAX}(A,A)\simeq\TEA.\]
To show that it underlies an isomorphism in the homotopy category of dbg algebras, we endow the resolution $P=\Cone(\shiftX\otimes_A\TAX\to\TAX)$ with a structure of left dbg $\TEA$-module. By definition, we have $\TEA=A\oplus Y[-\XX]$. The action of $A\subseteq\TEA$ is given by the left dbg $A$-module structures on $\shiftX\otimes_A\TAX$ and $\TAX$. The left multiplication by $Y[-\XX]$ on $P=\TAX\oplus(\shiftX\otimes_A\TAX)[1]$ is given by the matrix
\[\begin{pmatrix}
  0 & \lambda \\
  0 & 0
\end{pmatrix},\]
where $\lambda:Y[-\XX]\otimes_A(\shiftX\otimes_A\TAX)[1]\to\TAX$ is induced by the evaluation $Y[-\XX]\otimes_A\shiftX[1]\to A$, which makes sense since we have
\[Y[-\XX]\otimes_A\shiftX[1]=Y\otimes_A X[-\XX+\XX-1+1]=Y\otimes_A X.\]
For the middle column, we use the same strategy as above to compute $\RHomz_{\TAX}(A,\TAX)$. We have quasi-isomorphisms
\begin{align}
\begin{split}
\RHomz_{\TAX}(A,\TAX)&\simeq\Homz_{\TAX}(\Cone(\shiftX\otimes_A\TAX\to\TAX), \TAX)\\
&\simeq\Cone\big(\Homz_{\TAX}(T,\TAX)\to\Homz_{\TAX}(\shiftX\otimes_A T,\TAX)\big)[-1]\\
&\cong\Cone\big(\TAX\to\Homz_{A}(\shiftX,\TAX)\big)[-1]\\
&\simeq\Cone\big(\TAX\to Y[1-\XX]\oplus\TAX\big)[-1]\\
&\simeq Y[-\XX]
\end{split}
\end{align}
of dbg $\k$-modules, where the fourth quasi-isomorphism follows since
\begin{align}
\begin{split}
\Homz_{A}(\shiftX,\TAX)&=\Homz_{A}(\shiftX,\bigoplus_{p\geq 0}\shiftX^{\otimes_A^p})\\
&=\bigoplus_{p\geq 0}\Homz_{A}(\shiftX,\shiftX^{\otimes_A^p})\\
&=\Homz_{A}(\shiftX,A)\oplus\bigoplus_{p\geq 1}\Homz_{A}(\shiftX,\shiftX^{\otimes_A^p})\\
&=Y[1-\XX]\oplus\TAX.
\end{split}
\end{align}
Since that $\thick(A[q\XX], q\in\ZZ)=\thick(Y[q\XX], q\in\ZZ)$, the middle equivalence follows.
Moreover, the third equivalence and the community of $\RHomz_T(A,?)$ and $[\XX]$ are directly from computation above.

For the rightmost triangle in \eqref{eq:comdia}, we consider the generator $A$ in $\per A$. We have that
\[\RHomz_{\TAX}(A, \iota_A^{\ZZ}(A))\simeq Y[-\XX],\]
which is equivalent to $A[-1]$ in $\sg^{\ZZ}(\TEA,A)$, hence we finish the proof.
\end{proof}
\subsection{Examples}
Let us consider two special situations.
\paragraph{\textbf{Classical Koszul duality}}
For the case when $A=\k, X=\k[1-\XX]$ and $Y=\k[\XX-1]$, we have
\[\TAX=T_A(\k)=\k[u],\]
where $u$ is of bidegree $(0,0)$ and
\[\TEA=\k\oplus \k[-1]=\k[v]/(v^2),\]
where $v$ is of bidegree $(0,0)$. Here we consider $\TAX$ and $\TEA$ as augmented dg $A$-algebras with vanishing differential. We see that $\TAX$ and $\TEA$ are Koszul dual to each other in the sense that
\[\RHom_{\TEA}(A,A)\simeq\TAX\]
and
\[\RHom_{\TAX}(A,A)\simeq\TEA.\]
\paragraph{\textbf{The $\XX$-Calabi--Yau case}}
If $A$ is a smooth, proper and connective dg algebra, then we can take
\[X=A^{\vee}~\mbox{and}~Y=DA,\]
where $A^{\vee}$ is a cofibrant replacement of the dg bimodule $\RHom_{A^e}(A,A^e)$ and $D$ denotes the $\k$-dual. In this case, the dbg algebras
\begin{equation}\label{eq:T}
\TAX=T_A(A^{\vee}[\XX-1])\eqcolon\qq{\XX}
\end{equation}
and
\begin{equation}\label{eq:E}
E=A\oplus(DA)[-\XX]\eqcolon\Ex
\end{equation}
are the $\XX$-Calabi--Yau completion and the trivial extension of $A$ respectively. Moreover, \Cref{thm:RKD} generalizes both Ikeda--Qiu's and Happel's descriptions of the perfect derived category, cf. \Cref{rmk:iq} and \Cref{rmk:sg}.
\subsection{Application: $N$-reductions as taking orbits}\label{sec:5.2}
Let $A$ be a connective, smooth and proper dg algebra.
We aim to generalize Corollary 6.8 of \cite{IQ1} and its dual part from finite-dimensional path algebras to any such dg algebra $A$  (see \Cref{thm:mainapp}).

\paragraph{\textbf{$\infty$-cluster category}}
We consider the $\XX$-version of the generalized cluster category in the sense of \cite{A, G, K4}. Recall that the $\XX$-Calabi--Yau completion \cite{IQ1} of $A$ is the differential bigraded tensor algebra
\begin{equation}\label{eq:pix}
\Pi_{\XX}A=T_A(A^{\vee}[\XX-1])=\bigoplus_{p\geq 0}(A^{\vee}[\XX-1])^{\otimes_A^p},
\end{equation}
where $A^{\vee}$ is a cofibrant replacement of the dg bimodule $\RHom_{A^e}(A,A^e)$ and $[\XX-1]$ indicates the shift by bidegree $(-1,1)$. Let $\perz(\qq{\XX})$ be the perfect derived category and $\DXQ$ be the perfectly valued derived category, cf. \cref{sec:2.2}. The \emph{$\infty$-cluster category} $\C^{\ZZ}(\qq{\XX})$ is defined by the short exact sequence of triangulated categories:
\begin{equation}\label{sesX}
\begin{tikzcd}
0 \arrow[r] & \DXQ \arrow[r]& \perz(\qq{\XX}) \arrow[r] & \C^{\ZZ}(\qq{\XX}) \arrow[r] &0.
\end{tikzcd}
\end{equation}
Notice that $\DXQ$ is $\XX$-Calabi--Yau, i.e. it has a Serre functor given by $[\XX]$.
\paragraph{\textbf{Generalized $(N-1)$-cluster category}}
Let $N\geq 3$ be an integer. Recall that the $N$-Calabi--Yau completion \cite{K4} of $A$ is the derived dg tensor algebra
\begin{equation}\label{eq:piN}
\Pi_{N}A=T_A(A^{\vee}[N-1])=\bigoplus_{p\geq 0}(A^{\vee}[N-1])^{\otimes_A^p}.
\end{equation}
Let $\per(\qq{N})$ and $\DNQ$ be its perfect derived category and perfectly valued derived category respectively. The \emph{generalized $(N-1)$-cluster category} $\C(\qq{N})$ is defined by the short exact sequence of triangulated categories:
\begin{equation}\label{sesN}
\begin{tikzcd}
0 \arrow[r] & \DNQ \arrow[r]& \per(\qq{N}) \arrow[r] & \C(\qq{N}) \arrow[r] &0.
\end{tikzcd}
\end{equation}
Notice that $\DNQ$ is $N$-Calabi--Yau and the generalized $(N-1)$-cluster category $\C(\qq{N})$ is Hom-finite if we have
\[\begin{cases}
    H^p(\qq{N})=0,& \text{for $p>0$},\\
    \dim H^0(\qq{N})<\infty;
\end{cases}\]
or equivalently, we have
\[\begin{cases}
    H^{p+N-1}(A^{\vee})=0, & \text{for $p>0$},\\
    H^{p(N-1)}((A^{\vee})^{\otimes_A^p})=0,& \text{when $p\gg 0$}.
\end{cases}\]
For example, the conditions hold if $A$ is a finite-dimensional algebra concentrated in degree $0$ and $\gldim A<N$. If $\C(\qq{N})$ is Hom-finite, then it is $(N-1)$-Calabi--Yau, cf. \cite[Theorem 4.9]{G}.
\paragraph{\textbf{$N$-reductions}}
We have a canonical isomorphism of vector spaces
\[\bigoplus_{(a,b)\in\ZZ^2, a+bN=p} (\qq{\XX})^a_b\to(\qq{N})^p.\]
If we denote the dg algebra $A\oplus DA[-N]$ by $\TEA_N$, then, on the level of 
differential (bi)graded algebras, we have the canonical projections
\begin{equation}\label{eq:projection}
    \pixn\colon\qq{\XX}\to\qq{N}~\mbox{and}~\pixn\colon \Ex\to\TEA_N
\end{equation}
collapsing the double degree $(a,b)\in\ZZ\oplus\ZZ\XX$ into $a+bN\in\ZZ$. They induce functors
\[
    \pixn\colon\perz(\qq{\XX})\to\per(\qq{N})~\mbox{and}~\pixn\colon\pvdz(\Ex)\to\pvd(\TEA_N),
\]
which restrict to the responding subcategories
\[\pixn\colon\DXQ\to\DNQ~\mbox{and}~\pixn\colon\perz(\Ex)\to\per(\TEA_N).\]
Next we prove our main application.
\begin{theorem}\label{thm:mainapp}
We have the following commutative diagram between short exact sequences of triangulated categories:
\begin{equation}\label{eq:mainapp}
\begin{tikzpicture}[xscale=1.3,yscale=.9]
\begin{scope}[shift={(0,-1)}]
\draw[cyan] (0,0)node(a1){$\pvd(\qq{N})$} (3,0)node(a2){$\per(\qq{N})$} (6,0)node(a3){$\C(\qq{N})$};
\end{scope}
\draw (0,3)node(b1){$\pvdz(\qq{X})$}(3,3)node(b2){$\perz(\qq{\XX})$}(6,3)node(b3){$\C^{\ZZ}(\qq{X})$};
\draw[thick,{Hooks[right]}-stealth,cyan] (a1)edge(a2);
\draw[thick,{Hooks[right]}-stealth] (b1)edge(b2);
\draw[thick,->,>=stealth,cyan] (a2)edge(a3);
\draw[thick,->,>=stealth] (b2)edge(b3);
\draw[thick,->,>=stealth,red] (b1)edge node[font=\tiny,right]{$\sslash[\XX-N]$} (a1) (b2)edge node[font=\tiny,right]{$\sslash[\XX-N]$}(a2) (b3)edge node[font=\tiny,right]{$\sslash[\XX-N]$}(a3);
\begin{scope}[shift={(-1.5,-1.4)}]
\begin{scope}[shift={(0,-1)}]
\draw[cyan] (0,0)node(c1){$\per(\TEA_N)$} (3,0)node(c2){$\pvd(\TEA_N)$} (6,0)node(c3){$\sg(\TEA_N)$} (9,0)node(e){$\C_{N-1}(A).$};
\end{scope}
\draw (0,3)node(d1){$\perz(\Ex)$}(3,3)node(d2){$\pvdz(\Ex)$}(6,3)node(d3){$\sg^{\ZZ}(\Ex)$} (9,3)node(f){$\per A$};
\draw[thick,{Hooks[right]}-stealth,cyan] (c1)edge(c2);
\draw[thick,{Hooks[right]}-stealth] (d1)edge(d2);
\draw[thick,->,>=stealth,cyan] (c2)edge(c3);
\draw[thick,->,>=stealth] (d2)edge(d3);
\draw[thick,->,>=stealth,red] (d1)edge node[font=\tiny,right]{$\sslash[\XX-N]$}(c1) (d2)edge node[font=\tiny,right]{$\sslash[\XX-N]$}(c2) (d3)edge node[font=\tiny,right]{$\sslash[\XX-N]$}(c3) (f)edge node[font=\tiny,right]{$/\tau^{-1}\circ[2-N]$}(e);
\end{scope}
\draw[thick,->,>=stealth,cyan] (e)edge(c3) (e)edge(a3);
\draw[thick,->,>=stealth] (f)edge(d3) (f)edge(b3);
\draw[cyan] (-.5,-1.8)node{$\sim$}(2.5,-1.8)node{$\sim$}(5.5,-1.8)node{$\sim$} (7.1,-1.8)node{$\sim$}(6,-2.2)node{$\sim$};
\draw (-.5,2.2)node{$\sim$}(2.5,2.2)node{$\sim$}(5.5,2.2)node{$\sim$} (7.1,2.2)node{$\sim$} (5.8,1.8)node{$\sim$};
\foreach \j in {1,...,3}
{\draw[thick,->,>=stealth,cyan](a\j)edge(c\j); \draw[thick,->,>=stealth](b\j)edge(d\j);}
\end{tikzpicture}
\end{equation}
\end{theorem}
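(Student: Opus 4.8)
The plan is to recognise each vertical arrow $\sslash[\XX-N]$ in \eqref{eq:mainapp} as the canonical projection onto a triangulated orbit category in the sense of \Cref{def:triorbit}, for the autoequivalence $F=[\XX-N]=[\XX]\circ[-N]$, and then to read off the whole diagram from the functoriality of the orbit construction together with its compatibility with Verdier quotients in the bigraded form \eqref{eq:exactbi}.

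The key step is to check that the degree‑collapsing functor $\pixn$ of \eqref{eq:projection} is such a projection. The autoequivalence $F$ preserves perfectness and the perfectly valued condition (it only re‑indexes bidegrees), and one verifies $\pixn\circ F\simeq\pixn$, so that, on a dbg enhancement $\dg$ of $\perz(\qq{\XX})$, the pair $(\pixn,\id)$ defines an object of the $\ZZ$‑equivariant category and hence, by \Cref{mainthm}, a functor $\overline{\pixn}$ from the triangulated hull of $\dg/F^{\ZZ}$ to $\per(\qq{N})$. That $\overline{\pixn}$ is an equivalence follows because $\pixn$ sends the generator $\qq{\XX}$ of $\perz(\qq{\XX})$ to the generator $\qq{N}$ of $\per(\qq{N})$ and, on graded morphism spaces, induces exactly the degree‑collapsing isomorphism
\begin{equation*}
\bigoplus_{q\in\ZZ}\Hom^{\bullet}_{\D^{\ZZ}(\qq{\XX})}\big(\qq{\XX},\,\qq{\XX}[q(\XX-N)]\big)\iso\Hom^{\bullet}_{\D(\qq{N})}(\qq{N},\qq{N})
\end{equation*}
coming from \eqref{eq:projection}; here the essential phenomenon is that $F$ folds the Adams degree into the cohomological one, so that in the orbit $[\XX]$ becomes $[N]$ and the second grading becomes redundant. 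The same argument, together with the description of $\pvdz(\qq{\XX})$ as $\thick(A[q\XX])$ and of $\pvd(\qq{N})$ as $\thick(A)$ (and their analogues for $\Ex$, $\TEA_N$), shows that the triangulated orbit categories under $F$ of the four bigraded categories on the back of \eqref{eq:mainapp} are canonically $\pvd(\qq{N})$, $\per(\qq{N})$, $\per(\TEA_N)$, $\pvd(\TEA_N)$, with projections $\sslash[\XX-N]$.

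Next I would transport the back face of \eqref{eq:mainapp} to the front by applying the orbit construction to the whole of \Cref{thm:RKD}. Specialising to the Calabi--Yau case $X=A^{\vee}$, $Y=DA$, so that $\TAX=\qq{\XX}$ and $E=\Ex$ (cf.\ \eqref{eq:T}, \eqref{eq:E}), the diagram \eqref{eq:comdia} commutes and every functor in it, except the three equivalences with $\per A$, commutes with $[\XX]$ and hence with $F=[\XX]\circ[-N]$; under $[1]\circ\Phi$ and under $\Psi$ the autoequivalence $F$ corresponds to $?\Lten_AY[1-N]$, i.e.\ $\S\circ[1-N]$ (written $\tau^{-1}\circ[2-N]$ in \eqref{eq:mainapp}). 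By functoriality of the orbit construction, \eqref{eq:comdia} therefore descends to a commuting diagram of triangulated orbit categories under $F$, whose terms are, by the previous paragraph, exactly those on the front of \eqref{eq:mainapp}, with $\per A$ replaced by its triangulated orbit category under $\S\circ[1-N]$, which is $\C_{N-1}(A)$. Applying the bigraded commutativity \eqref{eq:exactbi} to the two bigraded short exact sequences appearing in \eqref{eq:comdia} --- the $\infty$‑cluster sequence \eqref{sesX} and the Happel‑type sequence $0\to\perz(\Ex)\to\pvdz(\Ex)\to\sg^{\ZZ}(\Ex)\to 0$ of \Cref{thm:sg} --- shows that their images under the orbit construction are again short exact; by the previous paragraph these images are precisely \eqref{sesN} and $0\to\per(\TEA_N)\to\pvd(\TEA_N)\to\sg(\TEA_N)\to 0$. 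Assembling all of this --- the two descended short exact sequences, the descended Koszul‑duality squares of \eqref{eq:comdia}, and the orbit projections as the vertical maps --- yields the commutative diagram \eqref{eq:mainapp}, and the compatibilities with $[\XX]$ recorded in \Cref{thm:RKD} pass to the orbits.

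I expect the main obstacle to be the first step: making precise, within the bigraded framework of \Cref{sec:3}, that collapsing the Adams grading really is the passage to the triangulated orbit category under $[\XX-N]$ --- in particular keeping careful track of idempotent completions and of the way $F$ folds the Adams grading into the cohomological one, so that the resulting orbit is an ordinary (rather than bigraded) triangulated category. Once this identification is established, the rest is formal naturality of the orbit construction applied to \Cref{thm:RKD}.
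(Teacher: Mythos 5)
Your proposal is correct and follows essentially the same route as the paper: it identifies the singly graded (blue) categories as the triangulated orbit categories of the bigraded ones under $[\XX-N]$ via the universal property of \Cref{mainthm}, translates this autoequivalence into $\S\circ[1-N]=\tau^{-1}\circ[2-N]$ on $\per A$ using \Cref{thm:RKD}, and concludes by the bigraded compatibility of orbit quotients with Verdier quotients (\Cref{cor:comm}, \eqref{eq:exactbi}). The only difference is that you spell out the identification of the collapsing functor $\pixn$ as the orbit projection (equivariant structure, generators, collapsed Hom spaces), which the paper subsumes under the ``uniqueness of triangulated orbit categories.''
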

\begin{proof}
The triangulated categories in black all have natural dg enhancements that satisfy \Cref{assumption} with $\pixn$ the corresponding collapsing functor. By \Cref{thm:RKD}, the collapsing functor $\pixn$ translates to $\S\circ[1-N]=\tau^{-1}\circ[2-N]$ under the triangle equivalences $[1]\circ\Phi:\per A\to\C^{\ZZ}(\qq{\XX})$ and $\Psi:\per A\to\sg^{\ZZ}(\Ex)$. Due to the uniqueness of the triangulated orbit categories in (the bigraded version of) Theorem \ref{mainthm}, we have that the categories in blue are the triangulated orbit categories respectively. By combining this result with (the bigraded version of) \Cref{cor:comm}, we obtain the commutative diagram \eqref{eq:mainapp}.
\end{proof}
\begin{remark}
The back side of \eqref{eq:mainapp} generalizes Corollary 6.8 of \cite{IQ1}, where the case of a finite-dimensional path algebra $A$ was treated. It confirms Conjecture 8.2 of \cite{IQ2} for those graded gentle algebras which are smooth, proper and connective.
Moreover, it also implies that the orbit $(N-1)$-cluster category in \Cref{def:orbitmclu} is triangulated equivalent to the generalized $(N-1)$-cluster category $\C(\qq{N})$ and coincides with its triangulated hull $\sg(\TEA_N)$ given by Keller in \cite[Section 7.1]{K1}.
\end{remark}

\begin{remark} Let us point out that
the setting of the above theorem is similar to the following situation considered in \cite{CHQ}: 
Let $A(\S,n)$ be the relative graded Brauer graph algebra (RGB algebra) associated with an S-graph $\S$ and a compatible integer $n$.
It is a dg algebra given by an explicit graded quiver with relations and a differential, cf. [loc.~cit.]. The generalized relative Ginzburg dg algebra 
$G(\S, n)$ is quasi-isomorphic to the Koszul dual of $A(\S,n)$. Let $\widetilde{\S}$ be a branched cover of $\S$ such that the quotient group is a finite group $K$. Then $G(\S, n)$ is the group quotient of $G(\widetilde{\S}, n)$ under the induced action of $K$. 
We have the following commutative diagram where the vertical arrows are quotients by $K$ (i.e. orbit categories) 
and the horizontal arrows stand for  Koszul duality
\[\begin{tikzcd}
{A(\widetilde{\S}, n)} \arrow[d, two heads] \arrow[r] & {G(\widetilde{\S}, n)} \arrow[d, two heads] \arrow[l] \\
{A(\S,n)} \arrow[r]                                   & {G(\S,n).} \arrow[l]                                  
\end{tikzcd}\]
\end{remark}

\appendix
\section{An alternative proof for \Cref{thm:geniq} in the $\XX$-Calabi–Yau case}\label{sec:X-cluster}
Let $A$ be a connective, smooth and proper dg algebra. We aim to realize the perfect derived category $\per A$ as its $\infty$-cluster category in an alternative way, which is a generalization of \cite[Theorem 6.7]{IQ1}. Let $\qq{X}$ be the $\XX$-Calabi--Yau completion of $A$. The inclusion (morphism of dg algebras) $A\to \qq{\XX}$ induces a triangle functor
\[
    \iota_A^\XX\colon ?\Lten_A\qq{\XX}:\per A\to\perz(\qq{\XX})
\]
mapping $A$ to $\qq{\XX}$.
\begin{lemma}
The functor $\iota_A^\XX$ is fully faithful.
\end{lemma}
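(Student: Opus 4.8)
The statement to prove is that the functor $\iota_A^\XX = ?\Lten_A\qq{\XX}\colon\per A\to\perz(\qq{\XX})$ is fully faithful, where $\qq{\XX}=\Pi_{\XX}A = T_A(A^\vee[\XX-1])$ is the $\XX$-Calabi--Yau completion of the smooth, proper, connective dg algebra $A$. The plan is to reduce the claim to a single Hom-computation on the generator $A\in\per A$ and then to compute $\RHom_{\qq{\XX}}(\qq{\XX},\qq{\XX})$ degree by degree in the Adams grading, exploiting the tensor-algebra form of $\qq{\XX}$.

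First I would note that since $\per A=\thick(A)$ and $\iota_A^\XX$ is a triangle functor commuting with the Adams shift $[\XX]$ (indeed $\iota_A^\XX(M[\XX]) = (M\Lten_A\qq{\XX})[\XX]$), and since $\iota_A^\XX(A)=\qq{\XX}$ is a compact generator of $\perz(\qq{\XX})$, it suffices to show that $\iota_A^\XX$ induces an isomorphism
\[
\Homz_{\per A}(A,A[n]) \iso \Homz_{\perz(\qq{\XX})}(\qq{\XX},\qq{\XX}[n])
\]
for all $n\in\ZZ$, where the bigraded Hom on the right accounts for all Adams shifts; equivalently, that $\RHomz_{\qq{\XX}}(\qq{\XX},\qq{\XX})\simeq A$ as differential bigraded $\k$-modules, with $A$ placed in Adams degree $0$. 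By the standard adjunction $\RHomz_{\qq{\XX}}(M\Lten_A\qq{\XX}, N)\simeq\RHomz_A(M, N|_A)$ for $M\in\per A$, we have
\[
\RHomz_{\qq{\XX}}(\qq{\XX},\qq{\XX}) \simeq \RHomz_A(A, \qq{\XX}|_A) \simeq \qq{\XX}|_A = \bigoplus_{p\geq 0}(A^\vee[\XX-1])^{\otimes_A^p},
\]
the $p$-th summand sitting in Adams degree $p$ (since $A^\vee[\XX-1]$ carries Adams degree $1$).

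The key step is then to identify the $n$-th differential-degree, $q$-th Adams-degree component of $\Homz_{\perz(\qq{\XX})}(\qq{\XX},\qq{\XX})$: the target side, being $\RHomz_A(A,\qq{\XX}|_A)$, has as its Adams-degree-$q$ part exactly $(A^\vee[\XX-1])^{\otimes_A^q}$ for $q\geq 0$ and $0$ for $q<0$, matching the Adams-degree-$q$ part of the graded object "$A$" (which is $A$ for $q=0$ and $0$ otherwise) \emph{only in Adams degree $0$}. So the remaining and main work is to show that the morphism induced by $\iota_A^\XX$ lands isomorphically onto the Adams-degree-$0$ part and that there is nothing in positive Adams degree \emph{in the perfect derived category}. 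This is where the precise definition of $\perz(\qq{\XX})$ and the tensor-algebra (hence: non-positively or suitably graded) structure enters — one argues, as in \Cref{thm:geniq} and its surrounding remark, that the extra summands $(A^\vee[\XX-1])^{\otimes_A^p}$ for $p\geq 1$ are killed when passing to the appropriate Hom in $\perz(\qq{\XX})$, or rather that the source $\Homz_{\per A}(A,A[n])=H^n(A)$ maps isomorphically onto the component of $\RHomz_{\qq{\XX}}(\qq{\XX},\qq{\XX})$ that survives. Concretely, I would invoke the description of $\perz(\qq{\XX})$ as the triangulated hull of the bigraded left lax quotient $\bigl(\perz(A)/^{dbg}_{ll}(?\Lten_A A^\vee[\XX-1])^{\NN}\bigr)^{tr}$ (the Remark following \Cref{thm:geniq}), under which $\iota_A^\XX$ is precisely the quotient functor $\QN$; then full faithfulness is the statement that for $M\in\perz(A)$, the morphism $\dg(M,M')\to\bigoplus_{p\geq 0}\dg(M, (M')\otimes_A(A^\vee[\XX-1])^{\otimes_A^p})$ is, after taking the appropriate (bi)graded cohomology, an isomorphism onto its $p=0$ part — which for $A$ smooth and proper and the CY completion is guaranteed because the higher terms contribute only in strictly positive Adams degree.

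The main obstacle I anticipate is making rigorous the claim that $\iota_A^\XX$ is \emph{not} merely full and faithful "on $H^0$" but genuinely induces an isomorphism on all the bigraded Hom groups simultaneously, i.e. controlling the interaction between the differential grading and the Adams grading: one must be careful that the $\hocolim$/direct-system phenomena used in the proof of \Cref{thm:geniq} (where $?\Lten_A A^\vee[\XX-1]$ eventually becomes invertible after the Verdier quotient) do \emph{not} occur here, since before passing to the cluster quotient the Adams grading genuinely separates the summands. I would handle this by working Adams-degree-by-Adams-degree: fix $q$, and show $\Homz$ in bidegree $(n,q)$ on the left is $H^n(A)$ if $q=0$ and $0$ otherwise, while on the right it is $H^n\bigl((A^\vee[\XX-1])^{\otimes_A^q}\bigr)$ for $q\ge 0$ — and then observe the map is an isomorphism in bidegree $(n,0)$ (it is $\id_{H^n(A)}$) while in positive Adams degree the left side vanishes and so must the map's image, which forces full faithfulness provided we also check the right side is concentrated in Adams degree $0$ \emph{as seen from $A$}, i.e. that no compact object of $\perz(\qq{\XX})$ contributes Hom from $A$ to $A[n][q\XX]$ with $q>0$. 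This last point follows from connectivity of $A$ together with the fact that $A^\vee[\XX-1]$ is concentrated in Adams degree $1$ and differential degrees bounded above (by smoothness), so $\RHom_A(A,(A^\vee[\XX-1])^{\otimes_A^q})$ for $q\ge1$ cannot coincide with anything coming from $A$ itself — making the computation of $\RHomz_{\qq{\XX}}(\qq{\XX},\qq{\XX})$ above the entire content of the lemma.
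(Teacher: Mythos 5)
Your reduction to the generator $A$ is fine in principle, but the key step is set up incorrectly: you assert that full faithfulness amounts to $\Homz_{\perz(\qq{\XX})}(\qq{\XX},\qq{\XX}[n])\cong\Hom_{\per A}(A,A[n])$, ``equivalently $\RHomz_{\qq{\XX}}(\qq{\XX},\qq{\XX})\simeq A$''. That is false, and your own adjunction computation contradicts it: $\RHomz_{\qq{\XX}}(\qq{\XX},\qq{\XX})\simeq \qq{\XX}|_A$, which is not quasi-isomorphic to $A$. The summands $(A^{\vee}[\XX-1])^{\otimes_A^p}$ with $p\ge 1$ have nonvanishing cohomology in general (already for $A=\k$, where $\qq{\XX}$ is a polynomial algebra), and neither connectivity nor smoothness of $A$ makes them acyclic; nothing gets ``killed when passing to the appropriate Hom in $\perz(\qq{\XX})$''. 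So the vanishing in positive Adams degree that your last paragraph relies on simply does not hold for $\Homz$, and the argument as written does not close.

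What you are missing is that the morphism spaces of $\perz(\qq{\XX})\subseteq\D^{\ZZ}(\qq{\XX})$ are by definition the Adams-degree-preserving Homs, not the sum $\Homz$ over all Adams shifts. With this observation the lemma is the paper's one-line computation: by adjunction $\Hom_{\perz(\qq{\XX})}(\qq{\XX},\qq{\XX}[k])\cong\Hom_{\D^{\ZZ}(A)}(A,\qq{\XX}[k])$, and since $A$ is concentrated in Adams degree $0$ while $(A^{\vee}[\XX-1])^{\otimes_A^p}$ sits in Adams degree $\neq 0$ for $p\ge 1$, only the $p=0$ summand contributes, giving $H^k(A)=\Hom_{\per A}(A,A[k])$. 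The higher tensor powers are not annihilated; they live in nonzero Adams degrees and therefore contribute to $\Homz$ but not to the actual Hom-spaces of the category. Once you replace your $\Homz$-reduction by this Adams-degree-zero remark, your bidegree-$(n,0)$ analysis is exactly the paper's argument, completed by the standard d\'evissage over $\thick(A)=\per A$.
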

\begin{proof}
It follows from direct computation
\begin{align}
\begin{split}
\Hom_{\perz(\qq{\XX})}(\qq{\XX},(\qq{\XX})[k])&=\Hom_{A}(A,(\qq{\XX})[k])\\
&=\Hom_{A}(A,(\bigoplus_{p\geq 0}(A^{\vee}[\XX-1])^{\otimes_A^p})[k])\\
&=\bigoplus_{p\geq 0}H^{k-p+p\XX}((A^{\vee})^{\otimes_A^p})\\
&=H^k(A)=\Hom_{\per A}(A,A[k]).
\end{split}
\end{align}
\end{proof}
We obtain the following theorem.
\begin{theorem}\label{thm:iq}
The composition
\begin{equation}\label{eq:main}
\begin{tikzcd}
    i_{*}:\per A \arrow[r, "\iota_A^\XX"] & \perz(\qq{\XX}) \arrow[r] & \C^{\ZZ}(\qq{\XX})
\end{tikzcd}
\end{equation}
is a triangle equivalence.
\end{theorem}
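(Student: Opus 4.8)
The plan is to prove that $i_*$ is fully faithful and essentially surjective. Throughout, write $\iota := \iota_A^\XX = ?\Lten_A\qq{\XX}$ for the inclusion functor, and let $\mathcal{E} := \iota(\per A) = \thick(\qq{\XX})\subseteq\perz(\qq{\XX})$ denote its essential image; by the preceding Lemma, $\iota$ restricts to an equivalence $\per A\iso\mathcal{E}$ of triangulated categories. Since $i_*$ is the composite of $\iota$ with the Verdier localization $Q\colon\perz(\qq{\XX})\to\C^\ZZ(\qq{\XX})$, and both functors are exact, $i_*$ is exact; so it remains to show that $i_*$ is an equivalence.

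\emph{Full faithfulness.} The crucial input is the orthogonality
\[
    \Hom_{\perz(\qq{\XX})}(N,\iota L)=0\qquad\text{for all }N\in\DXQ,\ L\in\per A
\]
(equivalently, since $\DXQ$ is closed under all shifts, $\Hom_{\perz(\qq{\XX})}(N,\iota L[k])=0$ for all $k\in\ZZ$). Both sides are thick in $N$ and in $L$; by the lemma of \Cref{sec:RKD} (using that $\qq{\XX}$ is negatively Adams graded with Adams-degree-zero part $A$ and that $A$ is proper), $\DXQ$ is generated by the objects $\widetilde{A}[q\XX]$, $q\in\ZZ$, where $\widetilde{(-)}$ denotes restriction along the augmentation $\qq{\XX}\to A$; and $\per A=\thick(A)$. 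So it suffices to show $\Hom_{\perz(\qq{\XX})}(\widetilde{A}[q\XX],\qq{\XX}[k])=0$ for all $q,k$, which is a computation of the type carried out in the preceding Lemma: morphisms in $\D^\ZZ(\qq{\XX})$ preserve the Adams grading; on $\widetilde{A}[q\XX]$ the subspace $\qq{\XX}_{<0}$ acts by zero and the module sits in a single Adams degree, whereas on the free module $\qq{\XX}$ right multiplication by $\qq{\XX}_{-1}=A^\vee[\XX-1]$ identifies each Adams-graded piece with the next, so matching Adams degrees forces any such morphism to vanish. Granting this, full faithfulness of $i_*$ is formal: for $L,M\in\per A$ one has $\Hom_{\C^\ZZ(\qq{\XX})}(i_*L,i_*M)=\colim_{s}\Hom_{\perz(\qq{\XX})}(X',\iota M)$, the colimit over morphisms $s\colon X'\to\iota L$ in $\perz(\qq{\XX})$ with $\Cone(s)\in\DXQ$; applying $\Hom_{\perz(\qq{\XX})}(-,\iota M)$ to the triangle $X'\xrightarrow{s}\iota L\to\Cone(s)\to X'[1]$ and using the orthogonality (so $\Hom_{\perz(\qq{\XX})}(\Cone(s)[j],\iota M)=0$ for $j=0,-1$), the canonical map $\Hom_{\perz(\qq{\XX})}(\iota L,\iota M)\to\Hom_{\perz(\qq{\XX})}(X',\iota M)$ is an isomorphism for every $s$, so the colimit collapses to $\Hom_{\perz(\qq{\XX})}(\iota L,\iota M)\cong\Hom_{\per A}(L,M)$ by the preceding Lemma.

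\emph{Essential surjectivity.} Since $\perz(\qq{\XX})=\thick(\qq{\XX}[q\XX]\mid q\in\ZZ)$, the category $\C^\ZZ(\qq{\XX})$ is generated, as a thick subcategory, by the images of the $\qq{\XX}[q\XX]$. The standard resolution of $\widetilde{A}$ over $\qq{\XX}$, namely the exact sequence of bigraded $\qq{\XX}$-modules $0\to A^\vee[\XX-1]\otimes_A\qq{\XX}\to\qq{\XX}\to\widetilde{A}\to0$, has outer terms $\iota(A^\vee)[\XX-1]$ and $\widetilde{A}\in\DXQ$; hence $\iota(A^\vee)[\XX-1]\cong\iota(A)$ in $\C^\ZZ(\qq{\XX})$. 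Tensoring the sequence over $A$ with an arbitrary $L\in\per A$ and using that $X=A^\vee$ is invertible with inverse $Y=DA$, we get $\iota(L)[\XX]\cong\iota(DA\Lten_A L)[1]$ in $\C^\ZZ(\qq{\XX})$, and iterating this we obtain $\qq{\XX}[q\XX]\cong\iota(M_q)[\pm q]$ in $\C^\ZZ(\qq{\XX})$ for a suitable $M_q\in\per A$ and every $q\in\ZZ$. Thus every generator of $\C^\ZZ(\qq{\XX})$ lies in the essential image of $i_*$; since $i_*$ is fully faithful and $\per A$ is idempotent complete, this essential image is a thick subcategory, whence $i_*$ is essentially surjective. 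Combining the three steps, $i_*$ is a triangle equivalence.

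The main obstacle is the orthogonality $\Hom_{\perz(\qq{\XX})}(N,\iota L)=0$ for $N\in\DXQ$, $L\in\per A$: once it is in hand, both full faithfulness and essential surjectivity are routine, but establishing it requires some care in handling the bigraded derived category — working with cofibrant models over $\qq{\XX}$ compatible with the Adams grading — so that the module-theoretic vanishing sketched above is legitimate at the derived level. (Alternatively, this orthogonality together with $\perz(\qq{\XX})=\thick(\mathcal{E}\cup\DXQ)$ exhibits $\langle\mathcal{E},\DXQ\rangle$ as a semiorthogonal decomposition of $\perz(\qq{\XX})$, giving $\C^\ZZ(\qq{\XX})\simeq\mathcal{E}\simeq\per A$ directly; this is essentially the same argument repackaged, and recovers the equivalence of \Cref{thm:geniq} in the $\XX$-Calabi--Yau case.)
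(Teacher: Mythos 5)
Your essential surjectivity step is sound in spirit (it is essentially the observation, used in \Cref{thm:geniq} and in \Cref{prop:per}, that the triangle $A^\vee[\XX-1]\otimes_A\qq{\XX}\to\qq{\XX}\to \widetilde A$ lets one trade Adams shifts of $\qq{\XX}$ for images of objects of $\per A$ in the quotient). But the full faithfulness argument rests on a false orthogonality, and this is a genuine gap, not a technical point about cofibrant models. You claim $\Hom_{\perz(\qq{\XX})}(\widetilde A[q\XX],\qq{\XX}[k])=0$ for \emph{all} $q,k\in\ZZ$, i.e. $\RHomz_{\qq{\XX}}(\widetilde A,\qq{\XX})=0$. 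This is self-contradictory whenever $A\neq 0$: since $A$ is smooth, $\qq{\XX}$ is smooth, so $\widetilde A\in\DXQ\subseteq\perz(\qq{\XX})=\thick(\qq{\XX}[q\XX],\,q\in\ZZ)$, and the claimed vanishing would give $\RHomz_{\qq{\XX}}(\widetilde A,\widetilde A)=0$, i.e. $\widetilde A=0$. Concretely, for $A=\k$ one has $\qq{\XX}=\k[u]$ with $u$ of bidegree $(1,-1)$, and the non-split triangle $\qq{\XX}[\XX-1]\xrightarrow{\,u\,}\qq{\XX}\to\k\to\qq{\XX}[\XX]$ already produces a nonzero map from an Adams shift of the simple module to $\qq{\XX}$. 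The underlying reason is the relative Calabi--Yau duality $\Hom(N,P)\cong D\Hom(P,N[\XX])$ for $N\in\DXQ$, $P\in\perz(\qq{\XX})$: because $\DXQ$ is stable under Adams shifts in both directions, neither $\Hom(\DXQ,\thick(\qq{\XX}))$ nor $\Hom(\thick(\qq{\XX}),\DXQ)$ vanishes outright; only ``half'' of each does, as in the appendix computations $\hua{X}^\perp=\thick(\hua{P}[\ZZ_{\le 0}\XX])$ and ${}^\perp\hua{Y}=\thick(\hua{P}[\ZZ_{\ge 0}\XX])$. Consequently your colimit computation of Hom spaces in the Verdier quotient does not collapse as stated, and the parenthetical claim that $\langle\thick(\qq{\XX}),\DXQ\rangle$ is a semiorthogonal decomposition of $\perz(\qq{\XX})$ is false for the same reason.

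This is precisely why the paper does not argue via a one-sided orthogonal decomposition: the appendix splits $\DXQ$ by Adams degree into a stable t-structure $\hua{X}\bot\hua{Y}$ (\Cref{prop:pvd}), produces the two torsion pairs $(\hua{X},\hua{X}^\perp)$ and $({}^\perp\hua{Y},\hua{Y})$ on $\perz(\qq{\XX})$ (\Cref{prop:per}), identifies $\hua{P}=\hua{X}^\perp\cap{}^\perp\hua{Y}[1]$ using minimal perfect dg modules and the $\XX$-Calabi--Yau duality, and then invokes Iyama--Yang's theorem to conclude $\C^{\ZZ}(\qq{\XX})\simeq\hua{P}\simeq\per A$; the main text instead builds an explicit quasi-inverse via the homotopy colimit functor $\Psi$ of \Cref{thm:geniq}. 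To repair your proof you would need to replace the global orthogonality by this Adams-degree-bounded information (or construct an inverse functor), at which point you are essentially reproducing one of these two arguments.
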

\begin{proof}
Our strategy is based on Theorem 1.1 of \cite{IY}. Since $A$ is smooth, the bimodule $A^{\vee}[\XX-1]$ is perfect and the $\XX$-Calabi--Yau completion $\Pi_{\XX}A$ of $A$ is smooth. Since $A$ is connective, we may assume that the truncation $\tau_{\leq 0}A=A$ so that we have a canonical dg algebra morphism $A\to H^0A$. Let $\{e_i\mid i\in I\}$ be a complete set of pairwise orthogonal primitive idempotents of $H^0 A, P_i=(\Pi_{\XX}A) e_i$ and $S_i$ the head of the indecomposable projective $H^0 A$-module $H^0P_i$. We also view the $S_i$'s as simple dg $A$-modules via the restriction along $A\to H^0 A$. Using the restriction along the canonical projection $\qq{\XX}\to A$, we also view the $P_i$'s and $S_i$'s as dg $\qq{\XX}$-modules. We define $\hh{S}\subseteq\pvdz(\qq{\XX})$ as the thick subcategory generated by $\{S_i\mid i\in I\}$ and $\hh{P}\subseteq\perz(\Pi_{\XX}A)$ the thick subcategory generated by $\{P_i\mid i\in I\}$.

We define
\[
    \hua{X}=\thick(\hua{S}[\ZZ_{\ge 0}\XX])
        \quad\text{and}\quad \hua{Y}=\thick(\hua{S}[\ZZ_{< 0}\XX]),
\]
which are stable under differential shifts in both directions. We write $\hua{X}^{\bot}$ and $^{\bot}\hua{Y}$ for the left and right orthogonals of $\hh{X}$ and $\hh{Y}$ in the perfect derived category $\perz(\qq{\XX})$.
We claim that
\[
    \hh{X}{^\perp}\cap\;{^\perp}\hh{Y}[1]= \hua{P}.
\]

By the orthogonality relations between ``projectives" and ``simples", we have that $$\Hom_{\perz(\qq{\XX})}(P_i[a+b\XX],S_j)$$ vanishes unless $i=j$ and $a=b=0$. Now, let $Z\in\perz(\qq{\XX})$. By Lemma 2.14 in \cite{P}, $Z$ is quasi-isomorphic to a minimal perfect dg module, cf. \cite[Definition 2.13]{P}, with underlying graded module
\[Z=\bigoplus_{\lambda\in\Lambda}P_{i_{\lambda}}[a_{\lambda}+b_{\lambda}\XX]\]
and differential $d$ whose components lie in
\[\rad(\qq{\XX})=\rad(A)\oplus\bigoplus_{p>0}A^{\vee}[\XX-1]^{\otimes_A^p}.\]
Therefore, the dimension of
\[\Hom_{\perz(\qq{\XX})}(Z,S_j[r+s\XX])=\bigoplus_{\lambda\in\Lambda}\Hom_{\perz(\qq{\XX})}(P_{i_{\lambda}}[a_{\lambda}+b_{\lambda}\XX],S_j[r+s\XX])\]
equals the number of $\lambda\in\Lambda$ such that $i_{\lambda}=j, a_{\lambda}=r$ and $b_{\lambda}=s$. By choosing $r\in\ZZ$ and $s<0$, we see that \[^{\bot}\hua{Y}=\thick(\hua{P}[\ZZ_{\ge0}\XX]).\]
By $\XX$-Calabi--Yau duality, we have
\[\Hom_{\perz(\qq{\XX})}(S_j[r+s\XX],Z)=D\Hom_{\perz(\qq{\XX})}(Z,S_j[r+(s+1)\XX]).\]
By choosing $r\in\ZZ$ and $s\geq0$, we see that \[\hua{X}^{\bot}=\thick(\hua{P}[\ZZ_{\le0}\XX]).\]
By our construction, the subcategory $^{\bot}\hua{Y}[1]$ equals $^{\bot}\hua{Y}$, hence $\hh{X}{^\perp}\cap\;{^\perp}\hh{Y}[1]= \hua{P}.$
Therefore, we obtain the theorem by combining this with \Cref{prop:pvd}, \Cref{prop:per} and applying Theorem 1.1 in \cite{IY}.
\end{proof}
\begin{proposition}\label{prop:pvd}
The perfectly derived category $\pvdz(\qq{\XX})$ is the thick subcategory generated by $\{S_i[a+b\XX], i\in I~\mbox{and}~a,b\in\ZZ\}$. In particular, we have a stable t-structure
$$\pvdz(\Pi_{\XX}A) = \hua{X}\bot\hua{Y}.$$ 
\end{proposition}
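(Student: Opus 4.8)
The plan is to establish both assertions by d\'evissage along the Adams grading, closely following the argument for $\pvdz(B,A)$ in the lemma of Section~\ref{sec:RKD}. Write $\Theta=A^{\vee}[\XX-1]$, so that $\qq{\XX}=T_A(\Theta)=\bigoplus_{p\geq 0}\Theta^{\otimes_A^p}$ is concentrated in Adams degrees of one fixed sign, with degree-zero component $A$. I will use two elementary facts: (i) for any dg $\qq{\XX}$-module $M$ and any threshold $t$, the Adams-degree truncation of $M$ lying on one fixed side of $t$ is a dg $\qq{\XX}$-submodule of $M$ and the complementary truncation is the quotient, because multiplication by $\qq{\XX}_{\neq 0}$ shifts the Adams degree in a fixed direction; and (ii) a dg $\qq{\XX}$-module concentrated in a single Adams degree has vanishing $\qq{\XX}_{\neq 0}$-action, hence is the restriction along the augmentation $\qq{\XX}\to A$ of a dg $A$-module placed in that Adams degree.

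For the first assertion, the inclusion $\thick(S_i[a+b\XX]\mid i\in I,\ a,b\in\ZZ)\subseteq\pvdz(\qq{\XX})$ is clear, since each $S_i$ is finite-dimensional over $\k$ and $\pvdz(\qq{\XX})$ is thick and stable under $[1]$ and $[\XX]$. For the reverse inclusion, take $M\in\pvdz(\qq{\XX})$; then $\sum_{p,q}\dim H^{p,q}(M)<\infty$, so $H^{\ast}(M)$ is supported in finitely many Adams degrees. Using fact~(i), I would peel $M$ apart by finitely many triangles whose outer terms are quasi-isomorphic to dg $\qq{\XX}$-modules concentrated in a single Adams degree (the infinite acyclic tail in each truncation being discarded, exactly as in the cited lemma). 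By fact~(ii), each such module is the restriction of a dg $A$-module $N$; being a direct summand of a shift of $M$ in $\D(\k)$, $N$ lies in $\pvd A$. Since $A$ is connective and proper, the standard $t$-structure on $\D(A)$ restricts to $\pvd A$ with heart $\mod H^0 A$, and $H^0 A$ is finite-dimensional, so $\pvd A=\thick_A(S_i\mid i\in I)$; pulling back and allowing arbitrary Adams and differential shifts then gives $M\in\thick(S_i[a+b\XX])$. Alternatively, since $A$ is smooth and proper one has $\pvdz(\qq{\XX})=\pvdz(\qq{\XX},A)=\thick(A[q\XX])$ directly by the lemma of Section~\ref{sec:RKD}, and $\thick(A[q\XX])=\thick(S_i[a+b\XX])$ because $A\in\thick_A(S_i)$ while each restricted $S_i$ lies in $\pvdz(\qq{\XX},A)$.

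For the ``in particular'' statement I need, for each $M\in\pvdz(\qq{\XX})$, a triangle $X_M\to M\to Y_M\to X_M[1]$ with $X_M\in\hua{X}$ and $Y_M\in\hua{Y}$, together with $\Hom_{\perz(\qq{\XX})}(\hua{X},\hua{Y})=0$. For the triangle, I would apply fact~(i) at the threshold separating the Adams degrees occurring in $\hua{S}[\ZZ_{\geq 0}\XX]$ from those in $\hua{S}[\ZZ_{<0}\XX]$: the resulting truncation gives a short exact sequence $0\to X_M\to M\to Y_M\to 0$ with $X_M$ a submodule and $Y_M$ the quotient, and the d\'evissage of the first part, run separately on each half, places $X_M$ in $\hua{X}$ and $Y_M$ in $\hua{Y}$. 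For the orthogonality, note that a dg $\qq{\XX}$-module concentrated in Adams degree $m$ admits a projective resolution with all terms supported in Adams degrees on the one side of $m$ dictated by the sign of the grading of $\qq{\XX}$, so computing $\Hom$ into a module concentrated in Adams degree $n$ on the opposite side gives zero. Applied to the generators $S_i[a+b\XX]$ ($b\geq 0$) and $S_j[c+d\XX]$ ($d<0$), and observing that the conclusion is unaffected by the differential shifts $a,c$, this gives $\Hom(X',Y'[n])=0$ for all $n$ on generators; the standard fact that left and right annihilator classes are thick upgrades it to $\Hom(\hua{X},\hua{Y})=0$. Together with the triangle this exhibits $\pvdz(\qq{\XX})=\hua{X}\bot\hua{Y}$ as a stable $t$-structure.

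The step I expect to require the most care is the d\'evissage bookkeeping: a cofibrant dg $\qq{\XX}$-module need not be concentrated in finitely many Adams degrees even when its cohomology is, so one has to work with the honest Adams grading, using that the one-sided truncations are genuine dg submodules and quotients and that the part truncated off is acyclic --- this is precisely the mechanism already used for $\pvdz(B,A)$ in Section~\ref{sec:RKD}. A secondary point is to fix the Adams-degree conventions (the direction of $[\XX]$ and the sign of the Adams grading of $\qq{\XX}$) consistently, so that the submodule appearing in the truncation triangle is the one in $\hua{X}$ and the orthogonality comes out with the correct handedness.
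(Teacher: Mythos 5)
Your proof is correct and follows essentially the same route as the paper: dévissage along the Adams-degree truncations $\sigma_{\leq p}$ (which are genuine dg submodules since $\qq{\XX}$ is negatively Adams graded), reducing each subquotient to an object of $\pvd A$ and then to the simples via the ordinary $t$-structure, with the $p=0$ truncation giving the stable $t$-structure. Your explicit verification of $\Hom(\hua{X},\hua{Y})=0$ and the alternative argument via $\pvdz(\qq{\XX})=\pvdz(\qq{\XX},A)=\thick(A[q\XX])$ are welcome additions but do not change the approach.
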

\begin{proof}
For a dg $\qq{\XX}$-module $M$, we have a canonical triangle
\[\sigma_{\leq p}M\to M\to\sigma_{>p}M\to\sigma_{\leq p}M[1],\]
where
\begin{equation}\label{eq:Xtru}
\sigma_{\leq p}M=\bigoplus_{j\leq p}M^{*,j} \quad\text{and}\quad \sigma_{>p}M=\bigoplus_{j> p}M^{*,j}.
\end{equation}
If $M$ is in $\pvdz(\qq{\XX})$, then we get a finite filtration $\sigma_{\leq p}M\subseteq M$ with the subquotient $\sigma_{\leq p}M/\sigma_{\leq p-1}M$. Then $\sigma_{\leq p}M/\sigma_{\leq p-1}M$ is concentrated in Adams grading $p$ with finite total dimension, hence in $(\pvd A)[-p\XX]$. On the other hand, for $N\in\pvd A$, the ordinary truncation functor gives a finite filtration $\tau_{\leq q}N\subseteq N$ with the subquotient $(H^qN)[-q]$. Since each $H^qN$ is of finite dimension for all $q\in\ZZ$, it is in the thick subcategory generated by $\{S_i, i\in I\}$, which implies the result. In particular, if we take $p=0$ in \eqref{eq:Xtru}, the corresponding $\XX$-truncation gives a canonical stable t-structure $$\pvdz(\Pi_{\XX}A) = \hua{X}\bot\hua{Y}.$$ 
\end{proof}
\begin{proposition}\label{prop:per}
Both $(\hua{X},\hua{X}^{\bot})$ and $(^{\bot}\hua{Y},\hua{Y})$ are torsion pairs in $\perz(\Pi_{\XX}A).$
\end{proposition}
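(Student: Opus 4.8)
The plan is to verify the torsion‑pair axioms directly. Recall that for full subcategories $\hua{U},\hua{V}$ of a triangulated category $\hua{T}$, each closed under direct summands, the pair $(\hua{U},\hua{V})$ is a torsion pair as soon as $\Hom_{\hua{T}}(\hua{U},\hua{V})=0$ and every object $T$ fits into a triangle $U\to T\to V\to U[1]$ with $U\in\hua{U}$, $V\in\hua{V}$. For both $(\hua{X},\hua{X}^{\bot})$ and $({}^{\bot}\hua{Y},\hua{Y})$ the orthogonality is built into the definition of the orthogonal subcategory, so the whole content is the existence of these (automatically functorial) triangles, i.e.\ the coreflectivity of $\hua{X}$, resp.\ the reflectivity of $\hua{Y}$, inside $\perz(\qq{\XX})$.

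For the first pair I would argue through a smashing localization of the ambient compactly generated category $\D^{\ZZ}(\qq{\XX})$. Since $\qq{\XX}$ is smooth we have $\pvdz(\qq{\XX})\subseteq\perz(\qq{\XX})$, so $\hua{X}=\thick\bigl(\{S_i[b\XX]\mid i\in I,\ b\ge 0\}\bigr)$ is the thick subcategory generated by a \emph{set of compact objects}. Hence $\operatorname{Loc}(\hua{X})\subseteq\D^{\ZZ}(\qq{\XX})$ is a localizing subcategory generated by compacts; by the standard theory it is smashing, both the Bousfield localization $L$ and colocalization $\Gamma$ at $\operatorname{Loc}(\hua{X})$ exist, and they preserve compactness (the compact objects of $\D^{\ZZ}(\qq{\XX})/\operatorname{Loc}(\hua{X})$ are up to summands the images of compacts, and $\operatorname{Loc}(\hua{X})^{\bot}$ is closed under coproducts, so $LZ$ — hence also $\Gamma Z$, the fibre of $Z\to LZ$ — is compact whenever $Z$ is). Restricting the colocalization triangle $\Gamma Z\to Z\to LZ\to\Gamma Z[1]$ to a compact object $Z$, and using $\operatorname{Loc}(\hua{X})\cap\perz(\qq{\XX})=\thick(\hua{X})=\hua{X}$ together with $\operatorname{Loc}(\hua{X})^{\bot}\cap\perz(\qq{\XX})=\hua{X}^{\bot}$, then gives for every $Z\in\perz(\qq{\XX})$ precisely the required triangle, with $\Gamma Z\in\hua{X}$ and $LZ\in\hua{X}^{\bot}$. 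So $(\hua{X},\hua{X}^{\bot})$ is a torsion pair.

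For the second pair, where the perfectly valued subcategory $\hua{Y}$ sits on the right, I would reduce to the first case by passing to the opposite algebra. Smoothness gives an anti‑equivalence $F=\RHom_{\qq{\XX}}(-,\qq{\XX})\colon\perz(\qq{\XX})^{op}\xrightarrow{\ \sim\ }\perz\bigl((\qq{\XX})^{op}\bigr)$, and $(\qq{\XX})^{op}$ is again an $\XX$‑Calabi--Yau completion, namely of $A^{op}$. An anti‑equivalence carries a torsion pair $(\hua{U},\hua{V})$ to the torsion pair $(F\hua{V},F\hua{U})$, so it suffices to show $\bigl(F(\hua{Y}),F({}^{\bot}\hua{Y})\bigr)$ is a torsion pair in $\perz\bigl((\qq{\XX})^{op}\bigr)$. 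Using the $\XX$‑Calabi--Yau identity already invoked in the proof of \Cref{thm:iq}, one gets $F(S_i)\simeq (DS_i)[-\XX]$, a shift of a simple $(\qq{\XX})^{op}$‑module; since $F$ reverses shifts, $F(\hua{Y})=\thick\bigl(\{(DS_i)[c\XX]\mid c\ge 0\}\bigr)$ is exactly an ``$\hua{X}$‑type'' subcategory over $A^{op}$ and $F({}^{\bot}\hua{Y})$ is its right orthogonal there, so the first part applies verbatim. (Alternatively, avoiding the opposite algebra, one can build both triangles by hand from a minimal perfect model $Z\simeq\bigl(\bigoplus_\lambda P_{i_\lambda}[a_\lambda+b_\lambda\XX],d\bigr)$ in the sense of \cite{P}: split the summands by the sign of $b_\lambda$ — a genuine sub‑dg‑module, because $\qq{\XX}$ is concentrated in Adams degrees of one sign — and correct the resulting bounded‑Adams‑degree factor, which is perfectly valued over $\k$, by the stable t‑structure $\pvdz(\qq{\XX})=\hua{X}\bot\hua{Y}$ of \Cref{prop:pvd}.) In either route the substance is bookkeeping rather than conceptual — tracking Adams‑degree shifts, matching the abstract (co)localization against the concrete descriptions of $\hua{X}$, $\hua{Y}$ and their orthogonals, and controlling passage to summands of minimal models — and I expect the clean identification of $F(\hua{Y})$ with an $\hua{X}$‑type subcategory over $A^{op}$ (equivalently, of the degree shifts in $F(S_i)\simeq (DS_i)[-\XX]$) to be the one genuinely delicate point.
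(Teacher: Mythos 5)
Your argument for the pair $(\hua{X},\hua{X}^{\bot})$ has a genuine gap at the compactness step, and that step is the entire content of the proposition. It is true that $\operatorname{Loc}(\hua{X})\subseteq\D^{\ZZ}(\qq{\XX})$ is generated by compact objects, that the associated Bousfield (co)localization exists and is smashing, that $\operatorname{Loc}(\hua{X})\cap\perz(\qq{\XX})=\hua{X}$ and $\operatorname{Loc}(\hua{X})^{\bot}\cap\perz(\qq{\XX})=\hua{X}^{\bot}$. But none of this implies that $LZ$ or $\Gamma Z$ is compact in $\D^{\ZZ}(\qq{\XX})$ when $Z$ is: $LZ$ is compact only as an object of the Verdier quotient, equivalently of $\operatorname{Loc}(\hua{X})^{\bot}$ regarded as a category in its own right, and that does not give compactness in the ambient category (one must test maps into coproducts of arbitrary objects, and $\Hom(LZ,\Gamma M)$ need not vanish). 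A standard counterexample where all of your general hypotheses hold: in $\D(\ZZ)$ with $\mathcal{L}=\operatorname{Loc}(\ZZ/p)$, the decomposition triangle of the compact object $\ZZ$ is $\Sigma^{-1}(\ZZ/p^{\infty})\to\ZZ\to\ZZ[1/p]$, and neither outer term is perfect; by uniqueness of such triangles it follows that $(\thick(\ZZ/p),\thick(\ZZ/p)^{\bot})$ is \emph{not} a torsion pair in $\per\ZZ$. So the fact that the Bousfield triangle of a perfect dbg $\qq{\XX}$-module has perfect pieces cannot follow from smashing-localization generalities; tellingly, your argument never uses that $A$ is proper and connective, whereas these hypotheses are essential.

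What the paper does instead is exactly to supply this finiteness input: from the canonical triangle $\qq{\XX}\otimes_A\Theta\to\qq{\XX}\to A$ (with $\Theta=A^{\vee}[\XX-1]$) and its twist by $\Theta'=DA[1-\XX]$, one gets $\hua{P}\subseteq\hua{P}[\XX]\bot\hua{S}$ and $\hua{P}[\XX]\subseteq\hua{S}\bot\hua{P}$ --- here properness and connectivity of $A$ give $A\in\hua{S}$, and smoothness puts the other term in a shift of $\hua{P}$ --- and an induction on Adams shifts then yields $\perz(\qq{\XX})=\thick(\hua{P}[\ZZ_{\geq0}\XX])\bot\thick(\hua{S}[\ZZ_{<0}\XX])$ and $\perz(\qq{\XX})=\thick(\hua{S}[\ZZ_{\geq0}\XX])\bot\thick(\hua{P}[\ZZ_{\leq0}\XX])$, i.e.\ the two decompositions with perfect pieces. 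Your treatment of $({}^{\bot}\hua{Y},\hua{Y})$ reduces to the first pair and so inherits the gap. The parenthetical ``by hand'' variant is also not yet a proof: after splitting a minimal model by the sign of $b_{\lambda}$, the complementary piece lies in $\thick(\hua{P}[\ZZ_{<0}\XX])$, a finite extension of shifted $P_i$'s which is \emph{not} perfectly valued, so it cannot be handled by \Cref{prop:pvd} alone; what is needed to push it to the correct side is precisely the lemma $\hua{P}\subseteq\hua{P}[\XX]\bot\hua{S}$ (and its dual), which is where the specific structure of $\qq{\XX}$ and the hypotheses on $A$ enter.
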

\begin{proof}
From the discussion above, it suffices to show that they generate $\perz(\Pi_{\XX}A).$ For $\perz(\qq{\XX}) = ^\bot\hua{Y}\bot \hua{Y},$ we claim that: $\hua{P}\subseteq\hua{P}[\XX]\bot \hua{S}.$ Let us abbreviate $A^{\vee}[\XX-1]$ by $\Theta$, so that $\qq{\XX}$ can be written as $T_A\Theta$. There is a canonical triangle
\begin{equation}\label{eq:sesp}
T_A\Theta\otimes_A\Theta\to T_A\Theta\to A\to(T_A\Theta\otimes_A\Theta)[1].
\end{equation}
Since $A$ is proper and connective, the dg module $A$ is in $\hua{S}$. Since we can write $T_A\Theta\otimes_A\Theta$ as $(\qq{\XX})\otimes_AA^{\vee}[\XX-1]$ and $A^{\vee}\in\per(A^e)$, the dg module $T_A\Theta\otimes_A\Theta$ is in $\thick(\qq{\XX}\otimes_{\k}A)[\XX]$, which is in $\hua{P}[\XX]$. By combining this with the triangle \eqref{eq:sesp}, we obtain the claim.

Similarly, we have $$\hua{P}[-m\XX] \subseteq \hua{P}[-m\XX+\XX]\bot \hua{S}[-m\XX]$$ for any positive integer $m$. By induction, we have $$\thick(\hua{P}[\ZZ_{<0}\XX]) \subseteq\hua{P}\bot \thick(\hua{S}[\ZZ_{<0}\XX]).$$ Therefore, we have $$\perz(\qq{\XX}) =\thick(\hua{P}[\ZZ_{\geq0}\XX])\bot\thick(\hua{S}[\ZZ_{<0}\XX]) = {^\perp}\hua{Y}\bot \hua{Y}.$$ As for $\perz(\qq{\XX}) = \hua{X}\bot \hua{X}^\bot,$ we also claim that $\hua{P}[\XX]\subseteq\hua{S}\bot\hua{P}$. For dg modules $L,M\in\pvd A$, since $A$ is smooth and proper, we have
\begin{equation}
\Hom_{\D A}(M\Lten_AA^{\vee},L)\simeq D\Hom_{\D A}(L,M)\simeq\Hom_{\D A}(M,L\Lten_ADA).
\end{equation}
This shows that the functor $?\Lten_AA^{\vee}$ is left adjoint to $?\Lten_ADA$. Since the latter functor is an equivalence, it follows that $?\Lten_AA^{\vee}$ and $?\Lten_ADA$ are quasi-inverse equivalences. Let us write $\Theta'=DA[1-\XX]$. If we apply $?\otimes_A\Theta'$ to the canonical triangle
\[A[-1]\to T_A\Theta\otimes_A\Theta\to T_A\Theta\to A,\]
then we get that
\begin{equation}\label{eq:sespx}
A[-1]\otimes_A\Theta'\to T_A\Theta\to T_A\Theta\otimes_A\Theta'\to A\otimes_A\Theta'.
\end{equation}
By a similar discussion as before, we have that the first term is in $\hua{S}[-\XX]$ and the third term is in $\hua{P}[-\XX]$, hence the claim holds.

Similarly, we have $$\hua{P}[m\XX] \subseteq \hua{S}[(m-1)\XX]\bot\hua{P}[(m-1)\XX]$$ for any positive integer $m$. By induction, we have $$\thick(\hua{P}[\ZZ_{>0}\XX]) \subseteq\thick(\hua{S}[\ZZ_{\geq0}\XX])\bot\hua{P}.$$ Therefore, we have $$\perz(\qq{\XX}) =\thick(\hua{S}[\ZZ_{\geq0}\XX])\bot \thick(\hua{P}[\ZZ_{\leq0}\XX]) = \hua{X}\bot \hua{X}^\bot.$$
\end{proof}
Notice that the category $\D^{\ZZ}(\qq{\XX})$ is enriched over the derived category of \emph{graded} vector spaces (and so are its full subcategories $\perz(\qq{\XX})$ and $\DXQ$), but the quotient $\C^{\ZZ}(\qq{\XX})$ is only enriched over the derived category of vector spaces. We can obtain the following corollary directly from \Cref{thm:iq}.
\begin{corollary}\label{cor:X-clu}
The bigraded cluster category $\C^{\ZZ}(\qq{\XX})$ is Hom-finite. Moreover, it is its Serre functor $(\XX-1)$ induced from $\perz(\qq{\XX})$ becomes the Serre functor $\tau^{-1}\circ[1]$ of $\per A$ under the equivalence $i_*$ in \eqref{eq:main}.
\end{corollary}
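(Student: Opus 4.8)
The plan is to deduce both assertions from the triangle equivalence $i_*\colon \per A\iso\C^{\ZZ}(\qq{\XX})$ of \Cref{thm:iq} by transporting the known structure of $\per A$ across it. For Hom-finiteness, I would first note that $\per A$ is Hom-finite: since $A$ is proper, $\Hom_{\D A}(A,A[k])=H^k(A)$ is finite-dimensional for each $k$ and vanishes for $|k|\gg 0$, so every morphism space of $\per A=\thick_{\D A}(A)$ is finite-dimensional. As $i_*$ is an equivalence, all $\k$-linear spaces $\Hom_{\C^{\ZZ}(\qq{\XX})}(X,Y)$ are then finite-dimensional, which is what Hom-finiteness means here (the associated bigraded Hom would in general be infinite-dimensional, as the remark preceding the corollary warns).

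For the Serre functor, recall that since $A$ is smooth and proper, $\per A$ carries the Serre functor $\S=?\Lten_A DA$, which in the normalisation of this paper equals $\tau^{-1}\circ[1]$. A triangle equivalence carries a Serre functor to a Serre functor, so $i_*\circ\S\circ i_*^{-1}$ is a Serre functor on $\C^{\ZZ}(\qq{\XX})$, and it remains to identify it with the autoequivalence induced by the bidegree $(-1,1)$ shift $[\XX-1]=[\XX]\circ[-1]$. I would first observe that $[\XX]$ (and hence also $[-1]$ and $[\XX-1]$) is an autoequivalence of $\D^{\ZZ}(\qq{\XX})$ stabilising both $\perz(\qq{\XX})$ and $\DXQ$, so it descends to an autoequivalence of the Verdier quotient $\C^{\ZZ}(\qq{\XX})$. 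Next I would establish the equivariance $[\XX]\circ i_*\simeq i_*\circ\S\circ[1]$: this is the specialisation to $X=A^{\vee}$, $Y=DA$ of the compatibility statement in \Cref{thm:geniq}, and can in any case be checked directly from the decomposition $DA[1]\Lten_A\qq{\XX}\simeq DA[1]\oplus\qq{\XX}[\XX]$ (coming from $DA\Lten_A A^{\vee}\simeq A$ and the grading of $\qq{\XX}$), together with the fact that $DA[1]\Lten_A M=\S M[1]$ is perfectly valued (because $A$ is proper) and hence vanishes in $\C^{\ZZ}(\qq{\XX})$. Composing this equivariance on the right with $[-1]$ and using that $i_*$ commutes with shifts gives $[\XX-1]\circ i_*\simeq i_*\circ\S$; hence $[\XX-1]$ is a Serre functor on $\C^{\ZZ}(\qq{\XX})$ and, under $i_*$, corresponds to $\S=\tau^{-1}\circ[1]$, as claimed.

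I do not expect a genuine obstacle, since the construction already provides the equivalence $i_*$ and everything reduces to transporting known structure. The only points that need care are the bookkeeping of the two independent shifts, so that $[\XX]\circ[-1]$ gets matched with $\S\circ[1]\circ[-1]=\S$ rather than with some other twist, and the verification that the extra summand $\S M[1]$ is perfectly valued and therefore dies in the Verdier quotient — both of which are immediate from the hypotheses on $A$.
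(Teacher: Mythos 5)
Your proposal is correct and follows essentially the same route as the paper, which states the corollary as a direct consequence of Theorem~\ref{thm:iq}: Hom-finiteness and the Serre functor of $\per A$ (for $A$ smooth and proper) are transported along the equivalence $i_*$, and the identification of $[\XX]$ with $\S\circ[1]$ (hence $[\XX-1]$ with $\S=\tau^{-1}\circ[1]$) is exactly the compatibility already established in Theorem~\ref{thm:geniq} (specialised to $X=A^{\vee}$, $Y=DA$), which your direct computation with $DA[1]\Lten_A\qq{\XX}\simeq DA[1]\oplus\qq{\XX}[\XX]$ reproduces.
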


\end{document}